\newtheorem{thm}{Theorem}[section]
\newtheorem{rque}{Remark}[section]
\newtheorem{prop}{Proposition}[section]
\def\ds{\displaystyle}
\def\abs#1{\vert #1 \vert}
\newcommand{\R}{\mathbb{R}}
\newcommand{\N}{\mathbb{N}}
\newcommand{\F}{\boldsymbol{F}}
\newcommand{\UU}{\boldsymbol{U}}
\newcommand{\VV}{\boldsymbol{V}}
\newcommand{\WW}{\boldsymbol{W}}
\newcommand{\FKAc}{{\sc FKA }}
\newcommand{\FKAl}{{\sc F}ull {\sc K}inetic {\sc A}pproach}
\def\ut{\widetilde{u}}
\def\At{\widetilde{A}}
\def\ct{c(\widetilde \VV)}
\def\M{\mathcal M}
\def\dsp{\displaystyle}
\def\abs#1{\vert #1 \vert}
\def\vecdeux#1#2{\left(\begin{array}{cc}
           #1\\
           #2
          \end{array}
          \right)
         }
\title{Unsteady mixed flows in non uniform closed water pipes: a  \FKAl.}
\author[1]{{\sc{C. Bourdarias}} \thanks{Christian.Bourdarias@univ-savoie.fr}}
\author[2,3]{{\sc{M. Ersoy }}\thanks{Mehmet.Ersoy@univ-tln.fr}}
\author[1]{{\sc{S. Gerbi}}\thanks{Stephane.Gerbi@univ-savoie.fr}}
\affil[1]{\small{Laboratoire de Math\'ematiques, UMR 5127 - CNRS and Universit\'e de Savoie, 73376 Le Bourget-du-Lac Cedex, France.}}
\affil[2]{\small{BCAM--Basque Center for Applied Mathematics, Bizkaia Technology Park 500, 48160, Derio, Basque Country, Spain.}}
\affil[3]{\small{\emph{Present address:} Université de Toulon, IMATH, EA 2134, 83957 La Garde, France.}}
\begin{document}
\date{}
\maketitle
\begin{abstract} 
We recall the \textbf{PFS} (\textbf{P}ressurized  and  \textbf{F}ree  \textbf{S}urface) model constructed for the modeling of unsteady mixed flows in closed water pipes where
transition points between the free surface and pressurized flow are treated as a free boundary associated to a discontinuity of the gradient of pressure.
Then we present a numerical kinetic scheme  for the  computations of unsteady mixed flows in closed water pipes. 
This kinetic method that we call \FKAc  for ``\FKAl''  is an easy and mathematically elegant way to deal with multiple transition points when the changes of state
between free surface and pressurized flow occur. We use two approaches namely the ``ghost waves approach'' and the ``\FKAl''  to treat these transition points.
We show that this kinetic numerical scheme has the following properties: it is wet area conservative, under a CFL condition it preserves the wet area positive,
it treats ``naturally'' the  flooding zones and most of all it is very easy to implement it. 
Finally numerical experiments versus laboratory experiments are presented and the scheme produces results that are in a very good agreement. 
We also present a numerical comparison with analytic solutions for free surface flows in non uniform pipes: the numerical scheme has a very good behavior.
A code to code comparison for pressurized flows is also conducted and leads to a very good agreement. 
We also perform a numerical experiment  when flooding and drying flows may occur and  finally make a numerical study of the order of the kinetic method.
\end{abstract}
\textbf{Keywords}: Mixed flows in closed water pipes, drying and flooding flows, kinetic interpretation of conservation laws,
kinetic scheme with reflections. \\
\textbf{AMS Subject classification} :  65M08, 65M75, 76B07, 76M12, 76M28, 76N15

\vspace*{0.5cm}

{\bf Notations concerning geometrical quantities} \\
\begin{tabular}{p{0.1\linewidth}p{0.8\textwidth}}
$\theta(x)$ & angle of the inclination of the main pipe axis $z = Z(x)$ at position $x$ \tabularnewline
$\mathbf{Z}(t,x)$ & dynamic topography \tabularnewline
$\Omega(x)$ & cross-section area of the pipe orthogonal to the axis $z=Z(x)$\tabularnewline
$S(x)$ &area of $\Omega(x)$\tabularnewline
$R(x)$& radius of the cross-section $\Omega(x)$\tabularnewline
$\sigma(x,z)$ & width of the cross-section $\Omega(x)$ at  altitude $z$
\end{tabular}
\newpage
{\bf Notations concerning the \textbf{PFS} model} \\
\begin{tabular}{p{0.1\linewidth}p{0.8\textwidth}}
$p(t,x,y,z)$& pressure \tabularnewline
$\rho_0$& density of the water at atmospheric pressure $p_0$ \tabularnewline
$\rho(t,x,y,z)$ & density of the water at the current pressure\tabularnewline
 $\dsp\overline{\rho}(t,x)$ & $\dsp\overline{\rho}(t,x)= \frac{1}{S(x)}\int_{\Omega(x)} \rho(t,x,y,z)\,dy\,dz$ 
 is the mean value of $\rho$ over $\Omega(x)$ (press. flows)\tabularnewline
$c$ & sonic speed\tabularnewline
$S_{w}(t,x)$& wet area i.e. part of the cross-section area in contact with water. $S_{w} =S(x)$ if the flow is pressurized\tabularnewline
$\dsp A(t,x)$ &  $\dsp A(t,x)=  \frac{\overline{\rho}(t,x)}{\rho_0} S_{w}(t,x) $ is the ``equivalent wet area''. $S_{w}=A(t,x)$ if the
flow is free surface\tabularnewline
 $u(t,x)$ & velocity\tabularnewline
$Q(t,x)$& $Q(t,x) = A(t,x) u(t,x)$ is the discharge\tabularnewline
$E$ & state indicator. $E=0$ if the flow is free surface, $E=1$ otherwise\tabularnewline
$\mathcal{H}(S_{w})$& the $Z$-coordinate of the water level equal to  $\mathcal{H}(S_{w})=h(t,x)$  if the state is free surface, $R(x)$ otherwise \tabularnewline
$p(x,A,E)$& mean pressure over $\Omega$\tabularnewline
$K_s>0$ & Strickler coefficient depending on the material \tabularnewline
$P_m(A)$& wet perimeter of $A$ (length of the part of the channel section in contact with the water) \tabularnewline
$R_h(A)$& $R_h(A) = \dsp \frac{A}{P_m(A)}$ is hydraulic radius
\end{tabular}

Bold characters are used for vectors, except for $\boldsymbol{Z}$, the dynamic topography defined later.
\section{Introduction}
The presented work takes place in a more general framework: the modeling of unsteady mixed flows in any kind of closed water pipes taking into 
account  the cavitation problem and air entrapment. We are interested in flows occurring in closed pipes with non uniform sections, where some parts of the flow
can be free surface (it means that only a part of the pipe is filled) and other parts are pressurized (it means that the pipe is full). 
The transition phenomenon between the two types of flows occurs in many situations such as storm sewers, waste or supply pipes in hydroelectric
installations. It can be induced by sudden changes in the boundary conditions as failure pumping.
During this process, the pressure can  reach severe values and  may cause damages.
The simulation of such a phenomenon is thus a major challenge and a great amount of works was devoted to it these last
years (see \cite{CSZ97,N90,R85,WS93}, and references therein).

The classical shallow water equations are commonly used to describe  free surface flows in open channels. 
They are also used in the study of mixed flows  using the Preissman slot artefact (see for example \cite{CSZ97,WS93}). 
However, this technic does not take into account  the  subatmospheric pressurized flows (viewed as a free surface flow) which occur
during a water hammer. In recent works,  \cite{KAEDP09,KAEDP11_1,KAEDP11_2}, a model for mixed flows in closed water pipes has been developed at 
University of Li\`{e}ge, where they use the artifact of the Preismann slot for supatmospheric pressurized flow and 
by introducing the concept of ``negative Preissman slot'' for subatmospheric pressurized flow.
They proposed also a numerical scheme to compute  the stationary flow, as well as the unsteady flow.

On the other hand  the Allievi equations, commonly used to describe pressurized flows, 
are written in a non-conservative form which is not well adapted to a natural coupling with the shallow water equations.

A model for the unsteady mixed water flows in closed pipes, the \textbf{PFS} model,  and a finite volume
discretisation have been proposed by the authors in \cite{BEG09} and its mathematical derivation from the Euler incompressible equations
(for the free surface part of the flow) and from the Euler isentropic compressible equations (for the pressurized part of the flow) is proposed in \cite{BEG12}.
This model and the finite volume scheme extend the model studied by two of the authors for uniform pipes \cite{BG07}.
In \cite{BG09} two of the authors has constructed a kinetic numerical scheme to compute  pressurized flows in uniform pipes. 
For the case of a non uniform  closed pipe and for pressurized flow, the authors has extended the previous kinetic numerical scheme with reflections, 
see \cite{BEG09_1}. Let us also mention that the construction of a kinetic numerical scheme with a correct treatment of all the source terms has been 
published recently \cite{BEG11_2}.

The paper is organized as follows. In  the second section, we recall the \textbf{PFS} model  and 
focus on the  continuous flux whose gradient is discontinuous at the interface between free surface and pressurized flow.
The source terms are also highlight: the conservative ones, the non conservative ones and the source term which is neither conservative nor conservative.
We use the definition of the DLM theory \cite{DLM95} to define the non-conservative products.
We state in this section  the theoretical properties of the system that must be preserved by the numerical scheme.

Section 3 is devoted to the kinetic interpretation of the \textbf{PFS} model thanks to the classical kinetic interpretation of the system (see \cite{PS01} for instance). 

In section 4, we construct the kinetic scheme for the \textbf{PFS} model. The particular treatment of the friction term which is neither conservative nor non-conservative will be upwinded using the notion of \textbf{\textit{ the dynamic topography}}, already introduced by the authors in recent works \cite{BEG09,BEG11_2}. 
Firstly, we use the same kinetic scheme with reflections that we have constructed in 
\cite{BG09,BEG09_1,BEG11_2} to treat the part of the flow where no transition points are present.
Then we treat the transition points by two ways: 
\begin{itemize}
\item as in \cite{BEG09}, the ``ghost waves approach'' is used. We make an assumption on the speed of the discontinuity
between free surface and pressurized flow, to compute the macroscopic states at the right hand side and the left hand side of this discontinuity.
For this sake, we treat the transition points at the macroscopic level.
\item a new approach that we called the ``\FKAl'' is then used to treat these transition points. We stay at the microscopic level to build the macroscopic states
at the right hand side and the left hand side of this discontinuity.
\end{itemize}
The particular treatment of the boundaries of the pipes is treated. Let us emphasize that the novelty in this work comes from the fact that the numerical scheme treats the
transition points as well as the boundary conditions at the microscopic level so that a uniform approach is made possible. 

In the last section, we present numerical experiments: the first one is the so-called Wiggert's test where we have experimental data to compare with.
A very good agreement is shown. Then we perform a code to code comparison for pressurized flow: we compare the results of the \verb+belier+ code used
by the engineers of  Electricit\'e de France, Centre d'Ing\'enierie Hydraulique, Chamb\'ery, (EDF-CIH)to compute a numerical solution of the Allievi equations by the 
characteristics method with the one we implemented, called \verb+FlowMix+, for the same engineers  for the computation of mixed flows.

Then we focus our attention in the numerical computations of steady states. This is due to the fact that we used of a very simple ``maxwellian''
function so that every computations of microscopic quantities are available exactly. 
This conducts to a very easy implementation of the numerical code which has the ambition to be exploited in an industrial way.
Unfortunately, the use of such a function does not permit to prove mathematically that the numerical scheme permits the computations of steady states. 
Nethertheless, we compare the behavior of the numerical scheme towards the analytic transcritical steady solution of free surface flows in non uniform pipe: 
the results are in a very good agreement. Then a mixed ``numerical'' steady state is computed and again the numerical scheme shows a very good agreement. 
Moreover, let us say that this numerical scheme seems very robust
since it is used ``everyday'' in an industrial way by the engineers of EDF-CIH in a lot of different configurations and they are confident in the numerical results.
We will also test the robustness of the code  \verb+FlowMix+ on a drying and flooding flow.
The finite volume version of the method we have presented in \cite{BEG09} could not treat this type of flow unless by the introduction of a cut-off function that will produce a lack of conservation of mass. Finally we perform a numerical study of the order of the method on a unsteady mixed flow (computed by the VFRoe
solver that we have constructed and validated in \cite{BEG09}) which will converge to a steady mixed flow.

In a similar framework, in \cite{KLT08}, Euler equations for compressible fluids in a nozzle with variable  discontinuous cross-section are considered. 
Regarding these equations as a nonconservative hyperbolic system,  weak solutions in the sense of Dal Maso, LeFloch and Murat \cite{DLM95} are investigated. 
A  fully conservative  entropy equality is derived and the authors construct well-balanced numerical scheme preserving the minimum entropy principle (see also \cite{LT11,KT05}). 

For the sake of simplicity, we do not deal with the deformation of the domain induced by the change of pressure. 
We will consider only an infinitely rigid pipe (see \cite{BG08} for unsteady pressurized flows in deformable closed pipe).

\section{A model for  unsteady water flows in closed water pipe}\label{SectionAModelForUnsteadyWaterFlowsPipes}
Although, in recent works (see \cite{BEG09, BEG12}), we have derived and studied a model for mixed flows in closed water pipes that we
called  the \textbf{PFS} model, for the sake of completeness of the present work, we briefly recall this model and its mathematical properties.

The \textbf{PFS} model (see \cite{BEG09, BEG12, TheseErsoy})   is a mixed model of a pressurized (compressible) and free surface  
(incompressible) flow in a one dimensional  rigid pipe with variable cross-section. The pressurized parts of the flow correspond 
to  a full pipe whereas  the section is not completely filled for the free surface flow. 

The free surface part of the model is derived  by writing the $3$D Euler
incompressible equations and by averaging over orthogonal sections to the privileged axis of the flow, the pressure being the hydrostatic pressure defined by:
\begin{equation}\label{pressure_FS}
P(t,x,z) = \rho_{0} g (h(t,x) - z(x)) \cos \theta(x) ,
\end{equation}
where $g$ is the gravity constant, $\theta(x)$  the inclination of the pipe, $\rho_{0}$ is  the density of the water at normal atmospheric conditions,
$h(t,x)$ is the water height of the free surface whereas  $z(x)$ is the altitude of the bottom of the pipe.

In the same spirit, by writing the Euler isentropic and compressible equations with the linearized pressure law 
\begin{equation}\label{pressure_P}
\dsp P(t,x,y,z)=p_0 + c^2(\rho(t,x,y,z) - \rho_0) ,
\end{equation}
where $c$ the sonic speed of the water (assumed to be constant), $\rho$ is  the density of the water,
we obtain a Saint-Venant like system of equations in the ``FS-equivalent'' variables 
$\dsp A(t,x) = \frac{\overline{\rho}(t,x)}{\rho_0} S(x)$, $Q(t,x)=A(t,x) u(t,x)$ 
which takes into account the compressible effects (for a detailed derivation, see \cite{BEG09, BEG12, TheseErsoy}).

These variables are suitable  to study mixed flows by setting:
$$A(t,x) = \frac{\overline{\rho}(t,x)}{\rho_0} S_{w}(t,x),\quad Q(t,x)=A(t,x) u(t,x),$$
where $S_{w}$ is the \emph{physical wet area}, i.e. the part of the cross-section area in contact with
water.

In order to deal with the transition points (that is, when a change of state occurs), we introduce a
state indicator variable $E$ which is equal to $1$
if the state is pressurized and to $0$ if the state is free surface.\\ 
Notice that  $S_{w}$ is $(A,E)$ dependent via the relations: 
$$
S_{w} = S_{w}(A,E) = \left\{
\begin{array}{lll}
S & \textrm{ if } & E = 1 ,\\
A & \textrm{ if } & E = 0 \ .
\end{array}
\right.
$$
After taking the mean value of the pressure term in the Euler equations over the wetted cross-section,
we get the pressure law as a  mixed ``hydrostatic'' (for the free surface part of the flow)  and ``acoustic''  type (for the pressurized part of the flow) as follows:
\begin{equation}\label{PFSPressureLaw}
 \dsp p(x,A,E) =  c^2(A-S_{w}) + g I_1(x,S_{w}) \cos\theta\, .
\end{equation}
Thus the continuity is obtained by the ``artificial'' addition in the pressure law of the term $-c^2 S_{w}$.
This form of the pressure insures the continuity of it at transition points.

The term $I_1$ is the classical hydrostatic pressure: 
$$\dsp I_1(x,S_{w}) =
\int_{-R}^{\mathcal{H}(S_{w})}(\mathcal{H}(S_{w})-z) \sigma \,dz,$$ where $\sigma(x,z)$ is the width of the
cross-section, $R=R(x)$ the radius of the cross-section and $\mathcal{H}(S_{w})$ is the $z$-coordinate of the
free surface over the main  axis $Z(x)$ (see figure \ref{OxOz} and figure \ref{OyOz}).
\begin{figure}[H]
 \begin{center}
 \includegraphics[height=5cm]{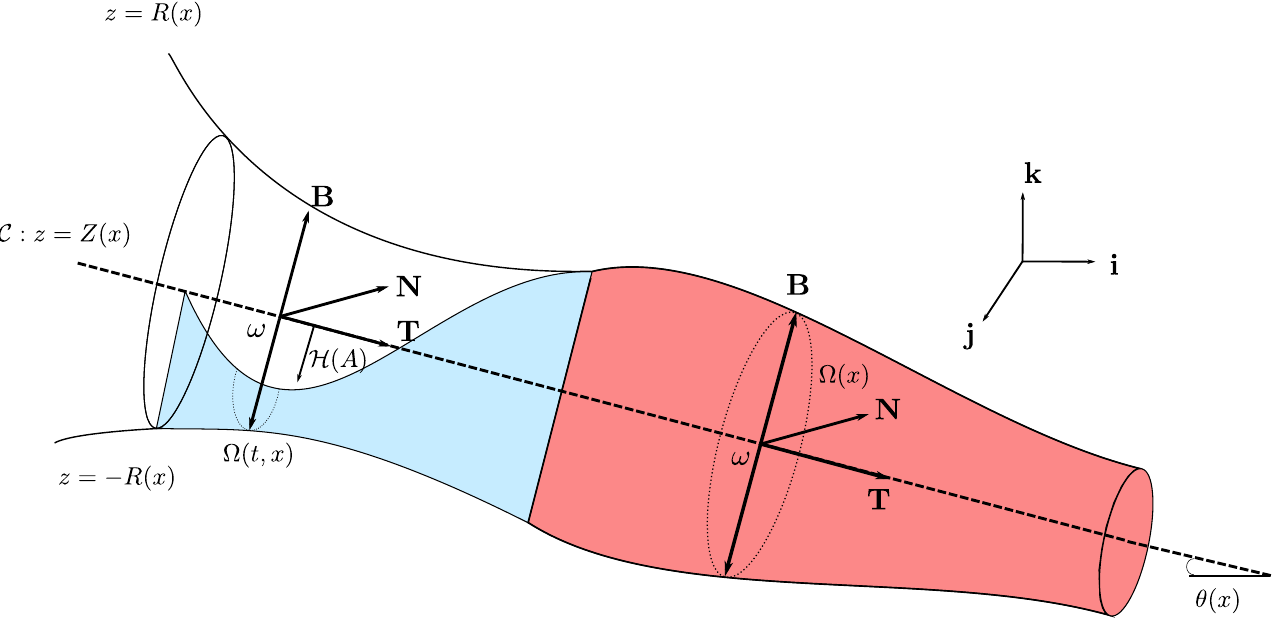}
  \caption{Geometric characteristics of the domain:}
  \small{free surface and pressurized flow. }
  \label{OxOz}
 \end{center}
\end{figure}
\begin{figure}[H]
 \begin{center}
 \includegraphics[height=5cm]{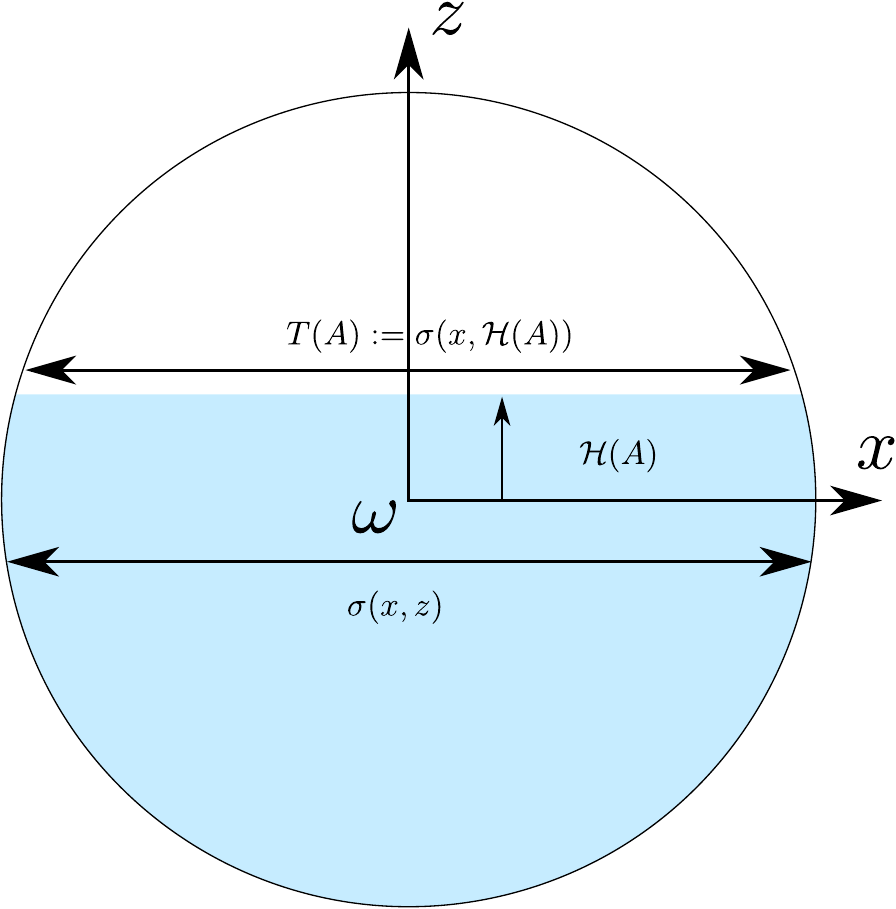}
  \caption{Cross-section $\Omega$.}
  \label{OyOz}
 \end{center}
\end{figure}

\begin{rque}
We can also regard $I_1/A =   \overline{y}$ as the distance separating the free surface to the center of the mass of the wet section (see figure \ref{ybar}).
\end{rque}
\begin{figure}[!ht]
 \begin{center}
 \includegraphics[height=5cm]{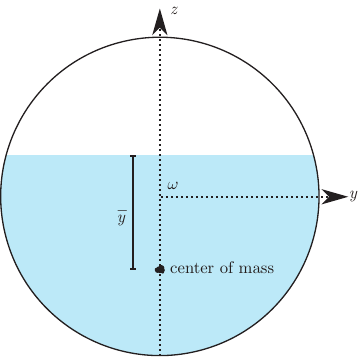}
  \caption{The distance separating the free surface to the center of the mass of the wet section.}
  \label{ybar}
 \end{center}
\end{figure}

The pressure  defined by Equation \eqref{PFSPressureLaw} is  continuous throughout the
transition points and we define the  \textbf{PFS} model by:
\begin{equation}\label{PFS}
\left\{
\begin{array}{lll}
\partial_{t}(A) + \partial_{x}(Q) &=&0, \\
\partial_{t}(Q) + \partial_{x} \left(\dsp \frac{Q^2}{A}+p(x,A,E)\right) &=&\dsp-g\, A\,Z' + Pr(x,A,E) \\
 & &\dsp-G(x,A,E) \\
 & &\dsp - K(x,A,E) \dsp\frac{Q|Q|}{A} \ .
\end{array}
\right.\,
\end{equation}
where $x \in (0,L)$, $L$ being the length of the pipe, $z=Z(x)$ is the altitude of the
main pipe axis. The terms $Pr$, $G$ and $K$ denote respectively
the pressure source term, a curvature term   and the friction:
$$\begin{array}{lll}
Pr(x,A,E) &=& \dsp c^2\left(A- S_{w}\right)\dsp \frac{S'}{S_{w}}+ g \,I_2(x,S_{w})\cos\theta , \\
 G(x,A,E) &=& \dsp g\,A\,  \overline{Z}(x,S_{w}) (\cos\theta)' = \dsp g\,A\,  \left(\mathcal H(S_{w})-I_1(x,S_{w})/S_{w}\right) \dsp
(\cos\theta)',
\\
K(x,A,E) &=& \dsp \frac{1}{K_s^{2} R_h(S_{w})^{4/3}}\ ,
  \end{array}
$$
where we have used the notation $f'$ to denote the  derivative with respect to the space variable $x$ 
of any function $f(x)$. 
The term $I_2$ is the hydrostatic pressure source term defined by:  
$$\dsp I_2(x,S_{w}) =
\int_{-R}^{\mathcal{H}(S_{w})}(\mathcal{H}(S_{w})-z) \partial_x\sigma \,dz \,. $$ 
The term   
$K_s>0$ is the Strickler coefficient depending on the material and $R_h(S_{w})$ is the hydraulic radius. 
\begin{rque}\label{I2}
Let us remark that whenever $S$ is constant on a sub-interval of $(0,L)$, that is $\sigma$ is constant, then $I_{2}(x,S_{w}) = 0$. This fact will be used in the
kinetic interpretation of the  \textbf{PFS} equations.
\end{rque}
\begin{rque}\label{rempression} ~
For the sake of clarity, let us detail the different terms of equation \eqref{PFS} for a free surface flow and a pressurized one.
\begin{itemize}
\item For a free surface flow, we have:
$$
\begin{array}{lll}
 p(x,A,0) &=  &g  I_1(x,A) \cos\theta , \\
Pr(x,A,0) &=& \dsp  g \,I_2(x,A)\cos\theta , \\
 G(x,A,0) &=& \dsp g\,A\,  \overline{Z}(x,A)  \dsp(\cos\theta)', \\
K(x,A,0) &=& \dsp \frac{1}{K_s^{2} R_h(A)^{4/3}}\ . 
  \end{array}
$$
\item For a pressurized flow, we have:
$$
\begin{array}{lll}
 p(x,A,1) &=  & c^2(A-S(x)) + \dsp g I_1(x,S(x)) \cos\theta\, . \\
Pr(x,A,1) &=& \dsp  \dsp c^2\Big(A -S(x)\Big)\dsp \frac{S'(x)}{S(x)}+ g \,I_2(x,S(x))\cos\theta, \\
 G(x,A,1) &=& \dsp g\,A\,  \overline{Z}(x,S(x))  \dsp(\cos\theta)', \\
K(x,A,1) &=& \dsp \frac{1}{K_s^{2} R_h(S(x))^{4/3}}\ . 
  \end{array}
$$
\end{itemize}
Let us notice that in this case, the term $c^2(A-S(x))$ in $p(x,A,1)$ represents the over-pressure.
\end{rque}
\begin{rque}\label{rempression}
The unknown state vector is denoted $\UU = (A,Q)$ and  the flux vector $\boldsymbol{F}$ by:
\begin{equation*}
\boldsymbol{F}(x,\UU,E) = \left(Q,\dsp \frac{Q^2}{A}+p(x,A,E)\right) \: .
\end{equation*}
We simply denote (when no ambiguity is possible):
\begin{equation}\label{QFlux}
F_{2}(A,Q) = \dsp \frac{Q^2}{A}+p(x,A,E) \: ,
\end{equation}
the second component of the preceding flux.\\
As it was pointed out in \cite[Remark 4.2]{BEG09}, the flux is continuous through the change of state of the flow whereas its derivative with respect to $A$
is discontinuous, due to the jump of the sound speed, see figure \ref{pression}. 
\begin{figure}[H]
 \begin{center}
 \includegraphics[height= 5cm]{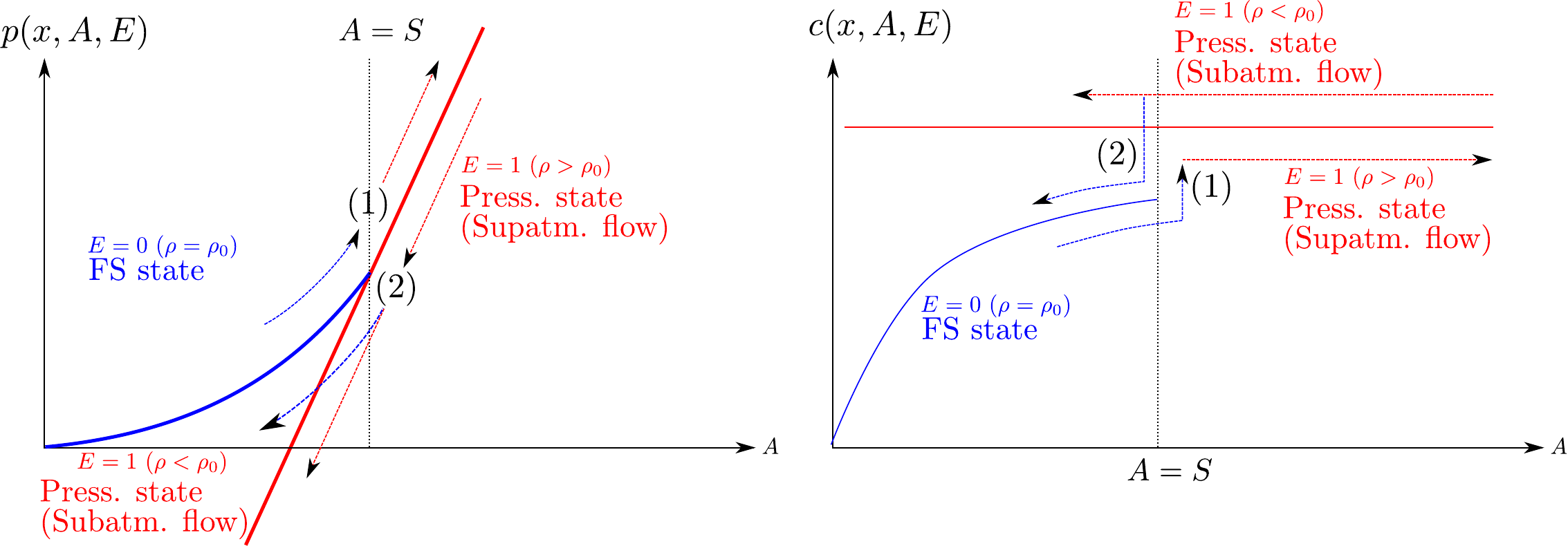}
  \caption{Pressure law and sound speed $c(x,A,E)$ in the case of a rectangular pipe.}
  \small{Trajectory (1) corresponds to a pressurization. Trajectory (2) depends on the state of the flow around}
  \label{pression}
 \end{center}
\end{figure}
\end{rque}
\paragraph*{Identification of the source terms. \newline}
In order to write the kinetic interpretation of the  \textbf{PFS} equations, 
we have to factorize by $A$ the right hand side of System \eqref{PFS}. Then, the source terms reads as follows:
\begin{itemize}
\item $\dsp g Z'$ is a conservative term.
\item 
$\dsp c^2\left(\frac{A-S_{w}}{A S_{w}}\right) S' 
= 
\left\{
\begin{array}{lll}
\dsp c^2\left(\frac{A-S}{A}\right) \frac{S'}{S} & \textrm{if}& E = 1\\
0 & \textrm{if} & E = 0
\end{array}
\right.
$ is a non-conservative product. 
\item $\dsp g\frac{ I_2(x,S_{w})\cos\theta}{A}$ is neither conservative nor non-conservative. 
\item $\dsp  g \,  \overline{Z}(x,S_{w})  \dsp(\cos\theta)' = g\,  \left(\mathcal{H}(S_{w})-I_1(x,S_{w})/S_{w}\right) \dsp
\cos\theta'$ is a non-conservative product. 
\item  $\dsp K(x,A,E) \dsp\frac{Q|Q|}{A^2}$ is neither conservative nor non-conservative.
\end{itemize}
Moreover, all the terms said to be non-conservative products are genuinely non-conservative product since they do not write as an exact differential form.   
\newpage
System \eqref{PFS} has the following properties:
\begin{thm}\label{ThmPFSModel} ~
\begin{enumerate}
\item System \eqref{PFS} is strictly hyperbolic on $\left\{A(t,x)>0\right\}\,.$
\item For smooth solutions, the mean velocity $u = Q/A$ satisfies:
\begin{equation}\label{ThmPFSEquationForU}
\begin{array}{c}
\partial_t u + \partial_x \left(\displaystyle\frac{u^2}{2} + c^2 \ln(A/{S_{w}}) + g\mathcal{H}(S_{w})\cos\theta + g Z\right) = -g K(x,A,E) u|u| 
\leqslant 0 \ .
\end{array}
\end{equation}
The quantity $\dsp \Phi(A,Q,\cos\theta,Z,E) =  \displaystyle\frac{u^2}{2} + c^2 \ln(A/{S_{w}}) + g\mathcal{H}(S_{w})\cos\theta + g Z$ is called the
total head.
\item The still water steady state, for $u = 0$, reads:
\begin{equation}\label{ThmPFSSteadyState}
c^2 \ln(A/{S_{w}}) + g\mathcal{H}(S_{w})\cos\theta + g Z = cte \ ,
\end{equation}
for some constant $cte$.
\item System \eqref{PFS} admits a mathematical entropy:
 $$\mathcal{E}(A,Q,E) =\displaystyle \frac{Q^2}{2A} + c^2 A \ln(A/{S_{w}})+ c^2 S + g A \overline{Z}(x,S_{w})\cos\theta + gA Z \ ,$$
which satisfies the entropy relation for smooth solutions
\begin{equation}\label{ThmPFSEntropy}
\partial_t \mathcal{E} +\partial_x \big((\mathcal{E}+p(x,A,E))u\big) =  -gAK(x,A,E) u^2 |u| \leqslant 0 \ .
\end{equation}
\end{enumerate}
\end{thm}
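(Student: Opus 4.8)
\quad
For the first item I would put System~\eqref{PFS} in quasilinear form $\partial_t\UU+\mathbf{A}(x,\UU,E)\,\partial_x\UU=(\text{lower order terms})$, where $\mathbf{A}$ is the Jacobian of the flux $\F$ with respect to $\UU=(A,Q)$:
$$\mathbf{A}(x,\UU,E)=\begin{pmatrix}0&1\\ c_0^2-u^2&2u\end{pmatrix},\qquad u=\frac{Q}{A},\qquad c_0^2:=\partial_A p(x,A,E),$$
the partial derivative being taken at fixed $x$ and $E$. From the pressure law~\eqref{PFSPressureLaw} one finds $c_0^2=c^2>0$ when $E=1$ (then $\SE=S(x)$), and $c_0^2=gA\cos\theta/\sigma\big(x,\mathcal{H}(\SE)\big)>0$ when $E=0$ (using $\partial_A\SE=1$, $\partial_{\mathcal{H}}I_1=\SE$ and $\partial_\SE\mathcal{H}=1/\sigma$), the strict positivity on $\{A>0\}$ coming from $\theta\in(-\pi/2,\pi/2)$. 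Hence $\mathbf{A}$ has the two distinct real eigenvalues $u\pm c_0$ and the system is strictly hyperbolic on $\{A>0\}$; $c_0$ jumps at a transition point, but on either side $E$ is locally constant, so there is nothing else to check.

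For the second item, set $u=Q/A$ and use the mass equation $\partial_t A+\partial_x Q=0$ to rewrite $\partial_t Q+\partial_x(Q^2/A)=A(\partial_t u+u\,\partial_x u)$; dividing the momentum equation of~\eqref{PFS} by $A>0$ then gives
$$\partial_t u+\partial_x\!\left(\frac{u^2}{2}\right)+\frac1A\,\partial_x p(x,A,E)=-gZ'+\frac{Pr(x,A,E)-G(x,A,E)}{A}-K(x,A,E)\,\frac{Q\abs{Q}}{A^2}.$$
The key step is then the pressure-consistency identity
$$\frac1A\,\partial_x p(x,A,E)-\frac{Pr(x,A,E)-G(x,A,E)}{A}=\partial_x\!\left(c^2\ln(A/\SE)+g\,\mathcal{H}(\SE)\cos\theta\right),$$
which I would establish by expanding $\partial_x p$ with the chain rule in each regime ($\SE=S(x)$ if $E=1$, $\SE=A$ if $E=0$), using the geometric relations for $\partial_x I_1$, together with $\partial_{\mathcal{H}}I_1=\SE$, $\partial_\SE\mathcal{H}=1/\sigma$ and the definition $\overline{Z}(x,\SE)=\mathcal{H}(\SE)-I_1(x,\SE)/\SE$ entering $G$ (the quantities $I_2$, $Pr$, $G$ being precisely designed so that this collapse occurs). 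The remaining slope and friction contributions assemble, after division by $A$, into $-g\,\partial_x\boldsymbol{Z}$ by the very definition of the dynamic slope, which yields~\eqref{ThmPFSEquationForU}. The third item is then immediate: taking $u\equiv0$ forces $Q\equiv0$, so the friction integral in $\boldsymbol{Z}$ vanishes and $\boldsymbol{Z}=Z$, the time derivative in~\eqref{ThmPFSEquationForU} drops, and an integration in $x$ produces~\eqref{ThmPFSSteadyState}.

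For the entropy relation I would write $\mathcal{E}(A,Q,E)=\dfrac{Q^2}{2A}+\mathcal{W}(x,A,E)+gA\,\boldsymbol{Z}$ with $\mathcal{W}=c^2A\ln(A/\SE)+c^2S+gA\,\overline{Z}(x,\SE)\cos\theta$, and note that the acoustic block carries a barotropic structure: $A^2\,\partial_A\!\big(c^2\ln(A/\SE)+c^2S/A\big)=c^2(A-\SE)$, exactly the acoustic part of $p$. I would then compute $\partial_t\mathcal{E}+\partial_x\big((\mathcal{E}+p)u\big)$ by substituting $\partial_t A$ and $\partial_t Q$ from~\eqref{PFS}: the mechanical-plus-acoustic contribution cancels by the classical energy identity for barotropic gas dynamics; the source terms $Pr$ and $G$ together with the explicit $x$-dependence of $\mathcal{W}$ (through $S(x)$, $\theta(x)$ and the geometry hidden in $I_1$, $\overline{Z}$) cancel by the same pressure-consistency identity as above; and the slope term $-gAZ'$ and the friction term $-KQ\abs{Q}/A$ are matched precisely by the two pieces of $\partial_x(gA\,\boldsymbol{Z})$ coming respectively from $Z$ and from the term $\int_x K u\abs{u}\,dx$ in $\boldsymbol{Z}$. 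The total vanishes, which is~\eqref{ThmPFSEntropy}.

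The only genuine difficulty is bookkeeping, not anything conceptual: establishing the pressure-consistency identity and running the energy computation uniformly for the two regimes $E=0$ and $E=1$, carefully tracking the chain rule through $\SE=\SE(A,E)$, $\mathcal{H}(\SE)$, $I_1(x,\SE)$ and $\overline{Z}(x,\SE)$ (and the implicit $x$-dependence of $\mathcal{H}$, with the boundary terms arising when differentiating $I_1$ in $x$). Once that identity is in hand, strict hyperbolicity, the still-water relation and the entropy balance all follow, the role of the dynamic slope $\boldsymbol{Z}$ being exactly to convert the non-conservative friction term into something compatible with a conservative total head and a conservative entropy flux.
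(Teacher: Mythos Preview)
Your proposal is correct and follows exactly the route the paper has in mind: the paper's own proof consists of the single sentence ``The proof of these assumptions relies only on algebraic combinations of the two equations forming System~\eqref{PFS} and is left to the reader'', so your plan---compute the flux Jacobian for item~1, divide the momentum equation by $A$ and invoke the pressure-consistency identity for item~2, specialise to $u=0$ for item~3, and run the barotropic energy computation for item~4---is precisely the omitted calculation. If anything, you have supplied considerably more structure (in particular the explicit identification of the identity $\tfrac{1}{A}\partial_x p-\tfrac{Pr-G}{A}=\partial_x\big(c^2\ln(A/\SE)+g\mathcal{H}(\SE)\cos\theta\big)$ and the role of the dynamic slope $\boldsymbol{Z}$) than the paper itself.
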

\begin{proof}
The proof of these assumptions relies only on algebraic combinations of the two equations forming System \eqref{PFS} and is left to the reader.
\end{proof}

In what follows, when no confusion is possible, the term $K(x,A,E)$ will be denoted simply $K(x,A)$
for free surface states  and $K(x,S)$ for pressurized states.
\begin{rque}
Equation \eqref{ThmPFSSteadyState} is the still water steady state equation associated to the \textbf{PFS} equation.
Indeed, for a pressurized flow (i.e. $S_{w} = S$),  when $u=0$ and $A = A(x)$, the following equations holds:  
$$c^2\ln(A/S) +g \mathcal{H}(S)\cos\theta+ g Z=cte \ .$$ 
Moreover when $S_{w}=A$, Equation \eqref{ThmPFSSteadyState} provides $g\mathcal{H}(A)\cos\theta  + g Z =cte$: this equation 
represents the horizontal line for a free surface still water steady states.
Moreover, when mixed still water steady states occur, i.e. when one part of the flow is pressurized and the other part of the flow is free surface, 
Equation \eqref{ThmPFSSteadyState} holds again.
\end{rque}
\section{The kinetic interpretation of the \textbf{PFS} model}\label{SectionKineticInterpretationPFSModel}
Recently in \cite{BEG09}, we  have investigated a class of approximated Godunov scheme for the
present \textbf{PFS} model in which we show how to  obtain by a suitable definition of the convection matrix an exactly well-balanced scheme for the
still water steady state. We also point  out that the upwinding of the source terms into the numerical fluxes introduces a stationary
wave with a vanishing denominator.  We also  discuss on the possibility to introduce a cut-off function to avoid the division by zero. But,
the truncation of the wet area $A$ induces  a loss of mass which implies a loss of the conservativity property. Moreover, the numerical
scheme loses accuracy. Therefore, stationary hydraulic  jump and flooding area are not accurately computed with this kind of numerical
scheme. As pointed out in \cite{PS01}, the numerical kinetic scheme are proved to satisfy the following stability properties:
the water height conservativity, the in cell entropy  inequality and the conservation of the still water steady state. 
Unfortunately, it holds only for rectangular geometry. 

The goal of this paper is to construct a  Finite Volume-Kinetic scheme that preserves the wetted area positive, that can treat naturally the flooding and also
that is easily implemented.

The big challenge in this construction is the fact that the continuous flux 
has a discontinuous gradient at the interface between free surface and pressurized flow, see Remark \ref{rempression}. 
In \cite{BEG09}, the finite volume scheme that we have constructed uses the ``ghost waves approach''  to overcome this difficulty.
We will see that we can still use this approach for the finite volume kinetic scheme but we will prefer to construct a fully kinetic scheme
that is a scheme at the kinetic level  which treats the changes of type of the flow.

First of all, let us recall the kinetic interpretation of the \textbf{PFS} model based on Perthame's kinetic formulation of conservation laws \cite{P02}.
\subsection{The mathematical kinetic interpretation}
Let $\chi:\R\to\R$ be a given real function satisfying the following  properties:
\begin{equation}\label{propchi}
\chi(\omega)=\chi(-\omega) \geqslant 0\;,\;
\int_{\R} \chi(\omega) d\omega =1,
\int_{\R} \omega^2 \chi(\omega) d\omega=1 .
\end{equation} 
It permits to  define  the density of particles, by a so-called \emph{Gibbs equilibrium}, 
\begin{equation}\label{Gibbs}
\mathcal{M}(t,x,\xi) =
\frac{A(t,x)}{b(t,x)} \chi\left(\frac{\xi-u(t,x)}{b(t,x)}\right) ,
\end{equation}
where $b(t,x) = b(x,A(t,x),E(t,x))$ with 
\begin{equation}\label{soncinetique}
b(x,A,E)= \left\{
\begin{array}{lll}
\dsp\sqrt{g\,\frac{I_1(x,A)}{A}\cos\theta} & \textrm{ if } & E = 0,\\
~\\
\dsp\sqrt{g\,\frac{I_1(x,S)}{A}\cos\theta+c^2} & \textrm{ if } & E = 1.\\
\end{array}
\right.
\end{equation}

\begin{rque}\label{remB}
Let us remark that when $E=0$, we have $b(x,A,0)=\displaystyle \sqrt{G \overline{y} \cos \theta}$, where 
$\overline{y}$ is the distance separating the free surface to the center of the mass of the wet section (see figure \ref{ybar}).
\end{rque}

The Gibbs equilibrium $\mathcal{M}$ is related 
to the \textbf{PFS} model by the classical 
 \emph{macro-micro}scopic kinetic relations:
\begin{eqnarray}
A  &=& \dsp\int_{\R} \mathcal{M}(t,x,\xi)\,d\xi\,, \label{macroA} \\
Q  &= &\dsp\int_{\R} \xi\mathcal{M}(t,x,\xi)\,d\xi\,, \label{macroQ} \\
\dsp\frac{Q ^2}{A }+A\,b(x,A,E)^2  &= &\dsp\int_{\R} \xi^2 \mathcal{M}(t,x,\xi)\,d\xi \label{macroFlux} \ .
\end{eqnarray}
From the relations \eqref{macroA}--\eqref{macroFlux}, 
the nonlinear \textbf{PFS} model can be viewed as a single linear equation involving the nonlinear quantity 
$\mathcal{M}$:
\begin{thm}[Kinetic interpretation of the \textbf{PFS} model]\label{ThmKineticFormulationPFS}
$(A,Q)$ is a strong solution of
System \eqref{PFS} if and only if ${\mathcal{M}}$ satisfies the kinetic transport equation:
\begin{equation}\label{KineticFormulationPFS}
\partial_t \mathcal{M}+\xi \cdot \partial_x\mathcal{M} - g\phi 
\,\partial_\xi \mathcal{M} = \mathcal{K}(t,x,\xi) ,
\end{equation}
for a collision term $\mathcal{K}(t,x,\xi)$ which satisfies for $(t,x)$ a.e. 
$$\dsp  \int_{\R} \vecdeux{1}{\xi} \mathcal{K}(t,x,\xi)\,d\xi = 0 \,,\,$$
where the source term $\phi$ is defined as:
\begin{equation}\label{PFSSourceTermPhi}
\begin{array}{ll}
\mbox {if E = 0}, &\displaystyle \phi(x) = \partial_{x}Z + K(x,A) u\abs{u} - \frac{I_{2}(x,A) \cos \theta}{A} + \overline{Z}(x,A) \partial_{x} \cos \theta ,\\[0.3cm]
\mbox{if E = 1}, &\displaystyle  \phi(x) =\partial_{x}Z + K(x,S) u\abs{u}  - \frac{I_{2}(x,S) \cos \theta}{A}  - 
\frac{c^{2}}{g} \frac{ A - S}{A} \frac{\partial_{x} S}{S}+ \overline{Z}(x,S) \partial_{x} \cos \theta \ .
\end{array}
\end{equation}
\end{thm}
\begin{proof}
The proof relies on very obvious computations since $\mathcal{M}$ verifies the macro-microscopic kinetic relations \eqref{macroA}, \eqref{macroQ}, \eqref{macroFlux},
and from the definition of the source terms $\phi$.
\end{proof}
\begin{rque}
The kinetic interpretation presented in Theorem \ref{ThmKineticFormulationPFS} is a (non physical) microscopic
description of the \textbf{PFS} model. 
\end{rque}
\section{Construction of the kinetic scheme for the \textbf{PFS} model}\label{SectionStudyOfTheNumericalApproximationOfThePFSModel}
In this section, following the works of \cite{PS01,BEG11_2}, we will construct a finite volume kinetic scheme
that preserves  the wetted area positive and that will compute ``naturally'' flooding zones. 
The main feature of this scheme is the treatment of transition points between free surface and pressurized flows. 
In a first step, we will  use the ``ghost waves approach'' that we have constructed in \cite{BEG09} to treat this difficulty: to this end we will go back to the macroscopic level to compute the unknown states $(A,Q)$ at the interface between free surface and pressurized flow.

In a second step, we will construct a fully kinetic scheme to treat the interface between the free surface and the pressurized flow: the Gibbs equilibrium
 on the right hand side and the left hand side of the interface between free surface and pressurized flow will be computed by kinetic formulas.
To upwind all the source terms at the microscopic level, we will use the ideas presented in the recent work of the authors \cite{BEG11_2}.

The particular treatment of the boundary conditions only at the microscopic level  will be rapidly exposed. This is the key feature of the ``\FKAl''.
\subsection{The kinetic scheme without transition points}\label{SectionTheKineticSchemeWithNoTransitionPoints}
In this section, we will treat the parts of the flow that are either free surface or pressurized.
Under this assumption and based on the kinetic interpretation (see Theorem \ref{ThmKineticFormulationPFS}), we construct easily a Finite
Volume scheme where the conservative quantities are cell-centered and  source terms are included into the numerical fluxes by a standard
kinetic scheme with reflections \cite{PS01}. 

To this end, let $N\in\N^{*}$, and let us consider the following mesh on $[0,L]$. Cells are denoted for every
$i\in [0,N+1]$, by $m_i =(x_{i-1/2},x_{i+1/2})$, with  $x_i=\ds\frac{x_{i-1/2}+x_{i+1/2}}{2}$ and 
$h_{i}=x_{i+1/2}-x_{i-1/2} $ the space step. The ``fictitious'' cells $m_0$ and $m_{N+1}$ denote the boundary cells and 
the mesh  interfaces located at $x_{1/2}$ and $x_{N+1/2}$ are respectively the upstream and the downstream ends of the pipe (see figure \ref{pipediscret}).
\begin{figure}[H]
 \begin{center}
 \includegraphics[height = 2cm]{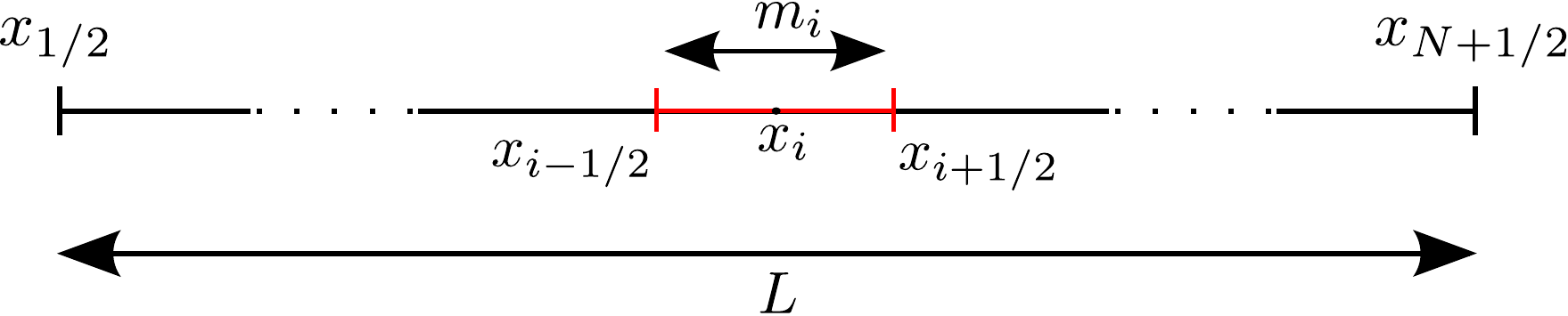}
  \caption{The space discretisation.}
  \label{pipediscret}
 \end{center}
\end{figure}
We also consider a time discretization $t^n$ defined by $t^{n+1}=t^n+\Delta t^n$ with $\Delta t^n$ the time step. 

We denote $\UU_i^n=(A_i^n,Q_i^n)$, $\dsp u_i^n = \frac{Q_i^n}{A_i^n}$, $\mathcal{M}_i^n$ 
the cell-centered approximation of $\UU = (A,Q)$, $u$ and $\mathcal{M}$ on the cell $m_i$ at time $t^n$.
We denote by $\UU_{0}^n=(A_0^n, Q_0^n)$ the upstream  and
$\UU_{N+1}^n=(A_{N+1}^n, Q_{N+1}^n)$ the downstream state vectors.

For $i\in [0,N+1] \,,\, E_i$ is the state indicator of the cell: $E_i = 0$ if in the cell $m_{i}$, the flow is a free surface flow, $E_{i} = 1$ 
if in the cell $m_{i}$, the flow is a pressurized flow.

On a time interval $[t^{n},t^{n+1}]$ and on the cell $m_{i}$, the kinetic equation \eqref{KineticFormulationPFS} writes:
\begin{equation}\label{KineticFormulationPFS2}
\left\{
\begin{array}{ll}
\partial_t \mathcal{M}+\xi \cdot \partial_x\mathcal{M} - g\phi 
\,\partial_\xi \mathcal{M} = \mathcal{K}(t,x,\xi) &\mbox{ for } x \in m_i \,,\, t \in (t^{n},t^{n+1}) \,,\,\xi \in \R ,\\[0.25cm]
\displaystyle \mathcal{M}(t^{n},x,\xi) =  \mathcal{M}_i^n(\xi)& \mbox{ for } x \in m_i \,,\, \xi \in \R \ .
\end{array}
\right.
\end{equation}

\begin{rque}\label{remkinetic}
At this stage, for $x \in m_{i}$, it is convenient to introduce the term: 
$$\ds \boldsymbol{Z}_{i}(t,x) = \left(Z(x)+\int_{x_{i-1/2}}^{x}K\big(s,A(t,s),E\big) u(t,s)\abs{u(t,s)}\,ds\right)  ,$$
 which is called \textbf{\textit{ the dynamic topography}} since it is time and space variable dependent contrary to the \textbf{static} topography
 $Z$ which is only $x$-dependent. 
This notion is closely related to the ``apparent topography'' 
introduced by Bouchut {\it et al} \cite{B04,BW04,MBTVB07}.
\end{rque}
For $x \in m_{i}$, denoting  
\begin{equation}\label{WW}
\dsp\WW = \left(\boldsymbol{Z}_{i},\;S,\;\cos\theta\right), 
\end{equation}
the source term $\phi$ defined by \eqref{PFSSourceTermPhi} becomes:
\begin{equation}\label{PFSSourceTermPhibis}
\begin{array}{ll}
\mbox {if } E = 0, &\displaystyle \phi(x,\WW) = \partial_{x}\boldsymbol{Z}_{i}- \frac{I_{2}(x,A) \cos \theta}{A} + \overline{Z}(x,A) \partial_{x} \cos \theta \ ,\\[0.3cm]
\mbox{if } E = 1, &\displaystyle  \phi(x,\WW) =\partial_{x}\boldsymbol{Z}_{i} - \frac{I_{2}(x,S) \cos \theta}{A}  - 
\frac{c^{2}}{g} \frac{ A - S}{A} \frac{\partial_{x} S}{S}+ \overline{Z}(x,S) \partial_{x} \cos \theta \ .
\end{array}
\end{equation}
The piecewise constant representation of $\dsp\WW$ defined by \eqref{WW} is given by,   
$\WW(t,x) = \dsp \WW_i(t) \mathds{1}_{m_i}(x)$ 
where $\WW_i(t)$ is defined as $\WW_i(t) = \dsp \frac{1}{\Delta x}\int_{m_i}\WW(t,x)\,dx$ for instance.
\begin{rque}Let us notice that as $\WW(x,t) = \WW_i(t)$ is constant on the cell $m_{i}$, thanks to remark \ref{I2},  $\phi(x,\WW) = 0$ on the cell $m_{i}$.
 Indeed, since $\boldsymbol{Z}_{i}$ is constant, $\partial_{x}\boldsymbol{Z}_{i} = 0$, since $S$ is constant, 
 $I_{2}(x,S_{w})~=~0$ and $\partial_{x}S = 0$ and since $\theta$ is constant, $\partial_{x} \cos \theta$ = 0.
 
 We use this simple fact to construct the kinetic scheme as follows.
\end{rque}

Neglecting the collision kernel as in \cite{PS01,BEG11_2} the kinetic transport equation
\eqref{KineticFormulationPFS2} simply reads:
\begin{equation}\label{eqcin}
\left\{
\begin{array}{ll}
\displaystyle \frac{\partial}{\partial t} f +\xi \cdot \frac{\partial}{\partial x} f =0 & \mbox{ for } x \in m_i \,,\, t \in (t^{n},t^{n+1}) \,,\,\xi \in \R ,\\[0.25cm]
\displaystyle f(t^{n},x,\xi) =  \mathcal{M}_i^n(\xi) &\mbox{ for } x \in m_i \,,\, \xi \in \R \ .
\end{array}
\right.
\end{equation}

This equation is a linear transport equation whose explicit discretisation may be done directly by
the following way. A finite volume discretisation of Equation \eqref{eqcin} leads to:
\begin{equation} \label{cindiscret}
\forall \xi \in \R \,,\, \forall x \in m_{i} \,,\,  f(t^{n+1},x,\xi) = f_i^{n+1}(\xi) = \mathcal{M}_i^n(\xi)+\frac{\Delta t}{h_i}\,
\xi \, \left(\mathcal{M}_{i+\frac{1}{2}}^-(\xi)-\mathcal{M}_{i-\frac{1}{2}}^+(\xi)\right),
\end{equation}
where the fluxes $\mathcal{M}_{i+\frac{1}{2}}^\pm$ have to take into account the discontinuity of
the source term $\phi(x,\WW)$ at the cell interface $x_{i+1/2}$. This is the principle of interfacial  source upwind. 
Indeed, noticing that the fluxes can also be written as:
\begin{equation*}
\mathcal{M}_{i+\frac{1}{2}}^-(\xi) = \mathcal{M}_{i+\frac{1}{2}} +
\left(\mathcal{M}_{i+\frac{1}{2}}^- - \mathcal{M}_{i+\frac{1}{2}}\right),
\end{equation*}
the quantity $\delta \mathcal{M}_{i+\frac{1}{2}}^- = \mathcal{M}_{i+\frac{1}{2}}^- -
\mathcal{M}_{i+\frac{1}{2}}$
holds for the discrete contribution of the source term $\phi(x,\WW)$ in the system for negative
velocities $\xi \leq 0$  due to the upwinding of the source term.
Thus $\delta \mathcal{M}_{i+\frac{1}{2}}^-$ has to vanish for positive velocity $\xi > 0$,
as proposed by the choice of the interface fluxes below.
Let us now detail our choice for the fluxes $\mathcal{M}_{i+\frac{1}{2}}^\pm$ at the interface. It
can be justified by using a generalized characteristic method for Equation \eqref{KineticFormulationPFS}
(without the collision kernel) but we give instead a presentation based on some physical energetic
balance. The details of the construction of these fluxes by the general characteristics  method (see \cite[Definition 2.1]{Da89}) is
done in \cite[Chapter 2]{TheseErsoy}.

In order to take into account the neighboring cells by means of a natural interpretation of the
microscopic features of the system, we formulate a peculiar  discretisation for the fluxes in
\eqref{cindiscret}, computed by the following upwinded formulas:
\begin{equation}\label{MicroscopicInterfaceFluxes}
\begin{array}{lll}
\mathcal M_{i+1/2}^{-}(\xi) &=&  
\overbrace{\dsp \mathds{1}_{\{\xi>0\}}\mathcal M_i^n(\xi)}^{{\textrm{positive  transmission}}}
+\overbrace{\mathds{1}_{\{\xi<0,|\xi|^2-2g\phi^n_{i+1/2}<0\}}\mathcal M_i^n(-\xi)}^{\textrm{reflection}}\\ 
&+& \underbrace{\dsp \mathds{1}_{\{\xi<0,|\xi|^2-2g\phi^n_{i+1/2}>0\}}
\mathcal M_{i+1}^n\left(\dsp-\sqrt{|\xi|^2-2g\phi^n_{i+1/2}}\right)}_{\textrm{negative   transmission}} \ ,\\
 & & \\
\mathcal M_{i+1/2}^{+}(\xi) &=& \overbrace{\dsp \mathds{1}_{\{\xi<0\}}\mathcal M_{i+1}^n(\xi)}^{\textrm{negative transmission}}
+\overbrace{\mathds{1}_{\{\xi>0,|\xi|^2+2g\phi^n_{i+1/2}<0\}}\mathcal M_{i+1}^n(-\xi)}^{\textrm{reflection}}\\ 
&+& \underbrace{ \dsp \mathds{1}_{\{\xi>0,|\xi|^2+2g\phi^n_{i+1/2}>0\}}
\mathcal M_{i}^n\left(\dsp\sqrt{|\xi|^2+2g\phi^n_{i+1/2}}\right)}_{\textrm{positive  transmission}}\ .
\end{array}\,
\end{equation}
The term $\phi^n_{i\pm 1/2}$ in \eqref{MicroscopicInterfaceFluxes}  is the upwinded source term
\eqref{PFSSourceTermPhi}. It also plays the role of the potential barrier:  
the term $|\xi|^2\pm 2g \phi^n_{i+1/2}$ is the jump condition for a particle with a kinetic speed $\xi$  
which is necessary  to
\begin{itemize}
\item be reflected: this means that the particle has not enough kinetic energy $|\xi|^2/2$ 
to overpass the potential barrier (reflection term  in \eqref{MicroscopicInterfaceFluxes}),
\item overpass the potential barrier with a positive speed 
(positive  transmission term in \eqref{MicroscopicInterfaceFluxes}),
\item overpass the potential barrier with a negative speed 
(negative  transmission term in \eqref{MicroscopicInterfaceFluxes}).
\end{itemize}
Having in mind the so-called non conservative products defined by Dal Maso, Murat and Lefloch \cite{DLM95}, and recalling that the dynamic topography is defined 
on the cell $m_{i}$ as $\ds \boldsymbol{Z}_{i}(t,x) = \left(Z(x)+\int_{x_{i-1/2}}^{x}K\big(s,A,E\big) u\abs{u}\,ds\right)$, we choose a midpoint approximation 
of the source term $\phi$   defined by Equation \eqref{PFSSourceTermPhi} at the interface $x_{i}$.\\
The same ``trick'' is used to take into account the pressure source term involving $I_{2}(x,S_{w})$ that is :
$$I_{2}(x,S_{w}) = \displaystyle \frac{ \displaystyle \partial} {\partial x}\left(\int_{x_{i-1/2}}^{x} I_{2}(s,S_{w}) ds\right) .$$

Thus, the potential barrier $\phi^n_{i+ 1/2}$ has the following expression:
\begin{eqnarray*}
\mbox{if } E = 0\,,\, \phi_{i+\frac{1}{2}}^n&=&Z_{i+1}-Z_i \\
&&+\frac{h_i}{2}\,K(x_i,A_i^n,E_i^n)\,u_i^n\,\abs{u_i^n}+\frac{h_{i+1}}{2}\,K(x_{i+1},A_{i+1}^n,E_{i+1}^n)\,u_{i+1}^n\,\abs{u_{i+1}^n}\\
&&- \left(\frac{h_{i}} {2} \frac{I_{2}(x_{i},A_{i}) \cos \theta_{i}}{A_{i}}  + \frac{h_{i+1}} {2} \frac{I_{2}(x_{i+1},A_{i+1})\cos \theta_{i+1}}{A_{i+1}} \right)\\
&&+\,\frac{\overline{Z}_i^n+\overline{Z}_{i+1}^n}{2}\,(\cos\theta_{i+1}-\cos\theta_i) \ ,\\
\end{eqnarray*}
\begin{eqnarray*}
\mbox{if } E = 1\,,\, \phi_{i+\frac{1}{2}}^n&=&Z_{i+1}-Z_i \\
&&+\frac{h_i}{2}\,K(x_i,A_i^n,E_i^n)\,u_i^n\,\abs{u_i^n}+\frac{h_{i+1}}{2}\,K(x_{i+1},A_{i+1}^n,E_{i+1}^n)\,u_{i+1}^n\,\abs{u_{i+1}^n}\\
&&- \left(\frac{h_{i}} {2} \frac{I_{2}(x_{i},S(x_{i})) \cos \theta_{i}}{A_{i}}  + \frac{h_{i+1}} {2} \frac{I_{2}(x_{i+1},S(x_{i+1})) \cos \theta_{i+1}}{A_{i+1}} \right) \\
&&-\frac{c^{2}}{g} \left(\frac{1} {2} \frac{(A_{i}- S(x_{i}))}{A_{i}} + \frac{1} {2} \frac{(A_{i+1}-S(x_{i+1})) }{A_{i+1}}\right)
\Big(\ln(S(x_{i+1}) - \ln(S(x_{i})\Big) \\
&&+\,\frac{\overline{Z}_i^n+\overline{Z}_{i+1}^n}{2}\,(\cos\theta_{i+1}-\cos\theta_i) \ .
\end{eqnarray*}

As the first term of  $\phi_{i+\frac{1}{2}}^n$ is $Z_{i+1}-Z_i$, we recover the classical interfacial upwinding for the  conservative term $Z$ used
by Perthame and Simeoni \cite{PS01}.

Since we neglect the collision term, it is clear that $f^{n+1}$ computed by the discretised
kinetic equation \eqref{cindiscret} is no more a Gibbs equilibrium. Therefore, to recover the macroscopic variables $A$ and $Q$,
according to  the identities \eqref{macroA}-\eqref{macroQ}, we set:
\begin{equation*}
 \UU_i^{n+1}=
\left(\begin{array}{l}
A_i^{n+1} \\
Q_i^{n+1}
\end{array}
\right)
\overset{def}{=} \int_\R \left(\begin{array}{l}
1 \\
\xi
\end{array}
\right) f_i^{n+1} \,d\xi \ .
\end{equation*}
In fact at each time step, we projected $f^{n}(\xi)$ on $\mathcal{M}_i^n(\xi)$, which is a way to perform all collisions at once and to recover a Gibbs equilibrium
without computing it.

Now, we can integrate the discretised kinetic equation \eqref{cindiscret} against 1 and $\xi$ to obtain the macroscopic kinetic scheme:
\begin{equation} \label{macrodiscret}
\UU_i^{n+1} = \UU_i^n +\frac{\Delta t^{n}}{h_i}\left(\F_{i+\frac{1}{2}}^- - \F_{i-\frac{1}{2}}^+ \right) \: .
\end{equation}
The numerical fluxes are thus defined by the kinetic fluxes as follows:
\begin{equation}\label{fluxdiscret}
\F_{i+\frac{1}{2}}^\pm
\overset{def}{=} \int_\R \xi \left(\begin{array}{l}
1 \\
\xi
\end{array}
\right) \mathcal{M}^\pm_{i+\frac{1}{2}}(\xi)\,d\xi \ .
\end{equation}
At this stage of the construction of the scheme, the choice of the function $\chi$ is crucial. The two main features of this choice are the following:
\begin{itemize}
\item the numerical kinetic scheme should preserve the still water steady state $u=0$ which gives equation \eqref{ThmPFSSteadyState}. Writing the
kinetic formulation of the still water steady state, we obtain an ordinary differential equation for the function $\chi$  which looks like equation (16) in the work
of Perthame and Simeoni \cite{PS01} with additional terms. For a non uniform pipe, we are not able to find an explicit solution of this ordinary differential equation.
\item the numerical scheme should also exhibit an in cell entropy inequality. This fact may be obtained for a function $\chi$ that minimizes a kinetic energy.
Unfortunately, as pointed out in the work of Bourdarias {\it et al.}  \cite[Equation (24)]{BGG08}, for a pressurized flow in a uniform pipe, the function $\chi$ that minimizes the kinetic energy has a non compact support. In the same work, the authors had noticed that the kinetic energy for a 
free surface flow is not always convex \cite[Remark 7]{BGG08}.

Nevertheless, as done by the authors in \cite[Equations (55)-(56)]{BEG09}, a well balanced corrections may be implemented at the macroscopic level to recover
the still water steady state. This is not the goal of the present work.
\end{itemize}

In this work, we have chosen a compact support function $\chi$ in order to fulfill a CFL type condition to obtain a scheme that preserves the positivity of the wet area
and therefore to ensure the $L^{\infty}$ stability of the scheme. Moreover, every annoying computations of integrals involving the function $\chi$ will lead to exact and
easy computations.

For the direct computations of all integral terms, we have chosen in the industrial code \verb+FlowMix+ 
(used by the engineers of  Electricit\'e de France, Centre d'Ing\'enierie Hydraulique, Chamb\'ery):
\begin{equation}\label{chifunction}
\chi=\frac{1}{2\sqrt{3}}\mathds{1}_{[-\sqrt{3},\sqrt{3}]} \ .
\end{equation}

Let us emphasize that although the function $\chi$ chosen for the computations does not lead to the preservation of steady states and to the in cell entropy inequality,
the numerical results presents in section \ref{steady} show a very good behavior of the numerical kinetic scheme to recover free surface steady state as well as 
mixed steady states without notable damages.
%
%
\begin{prop}\label{SchemaCinProprieteClassique}
Let $\chi$ be a compactly supported function verifying  \eqref{propchi} and denote $[-M,M]$ its support. 
The kinetic scheme \eqref{macrodiscret}-\eqref{fluxdiscret} has the following properties:
\begin{enumerate}
\item The kinetic scheme is a wet area conservative  scheme,
\item Assume the following CFL condition  
\begin{equation}\label{CFLcondition}
\Delta  t^n \max_i\left(\abs{u_i^n}+ M\,b_i^n\right)\leqslant \max_i h_i
\end{equation} holds. 
Then the kinetic scheme keeps the wet area $A$ positive i.e:
$$\mbox{ if, for every } i\in [0,N+1] \,,\, A^0_i \geqslant 0 \mbox{ then, for every } i\in [0,N+1] \,,\, A_i^n \geqslant 0.$$
\item The kinetic scheme treats ``naturally'' flooding zones.
\end{enumerate}
\end{prop}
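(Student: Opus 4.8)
The plan is to establish the three assertions in order, exploiting the microscopic representation \eqref{cindiscret} and the macro-microscopic relations \eqref{macroA}--\eqref{macroQ}.

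\emph{Wet area conservation.} First I would integrate \eqref{cindiscret} against $1$ over $\xi\in\R$ to recover the first line of \eqref{macrodiscret}, namely $A_i^{n+1}=A_i^n+\frac{\Delta t^n}{h_i}\big((\F_{i+1/2}^-)_1-(\F_{i-1/2}^+)_1\big)$ with $(\F_{i+1/2}^\pm)_1=\int_\R \xi\,\mathcal M_{i+1/2}^\pm(\xi)\,d\xi$. The key is that the reflection terms in \eqref{MicroscopicInterfaceFluxes} contribute zero to this first moment: the substitution $\xi\mapsto-\xi$ in $\int_{\xi<0}\xi\,\mathcal M_i^n(-\xi)\,d\xi$ shows it cancels against nothing on its own, but rather one checks that the \emph{total} flux leaving cell $m_i$ through $x_{i+1/2}$ from the left equals the total flux entering $m_{i+1}$ from the left, i.e. $(\F_{i+1/2}^-)_1=(\F_{i+1/2}^+)_1$. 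This follows by changing variables $\zeta=-\sqrt{\xi^2-2g\Delta\phi_{i+1/2}^n}$ (resp. $\zeta=\sqrt{\xi^2+2g\Delta\phi_{i+1/2}^n}$) in the transmission integrals, which maps $\xi\,d\xi$ to $\zeta\,d\zeta$, and by noting the reflection pieces are odd in $\xi$ on their (symmetric-in-sign) domains of integration. Hence the scheme is conservative in $A$: summing \eqref{macrodiscret} over $i$ telescopes, leaving only boundary contributions.

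\emph{Positivity under CFL.} For the second point I would return to \eqref{cindiscret} itself and show that, under \eqref{CFLcondition}, $f_i^{n+1}(\xi)\geqslant 0$ for all $\xi$, which immediately gives $A_i^{n+1}=\int_\R f_i^{n+1}\,d\xi\geqslant 0$. Rewrite $f_i^{n+1}$ using \eqref{cindiscret}: the term $\xi\,\mathcal M_{i+1/2}^-(\xi)$ contributes only for $\xi>0$ a term $+\frac{\Delta t}{h_i}\xi\mathcal M_i^n(\xi)$, while $-\xi\,\mathcal M_{i-1/2}^+(\xi)$ contributes only for $\xi<0$ a term $+\frac{\Delta t}{h_i}|\xi|\mathcal M_i^n(\xi)$ (plus reflection/transmission terms from the neighbours, which carry nonnegative prefactors $\pm\xi\geqslant 0$ on their support and evaluate $\mathcal M^n$ at real arguments, hence are $\geqslant 0$). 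Collecting the ``own-cell'' terms gives the bound
\begin{equation*}
f_i^{n+1}(\xi)\;\geqslant\;\mathcal M_i^n(\xi)\Big(1-\frac{\Delta t}{h_i}|\xi|\Big)\;+\;(\text{nonnegative neighbour terms}),
\end{equation*}
and since $\mathcal M_i^n$ is supported in $\xi\in[u_i^n-M b_i^n,\,u_i^n+M b_i^n]$ by \eqref{Gibbs} and \eqref{propchi}, on that support $|\xi|\leqslant|u_i^n|+M b_i^n$, so \eqref{CFLcondition} forces $1-\frac{\Delta t}{h_i}|\xi|\geqslant 0$. An induction on $n$ finishes it, using that $A_i^0\geqslant 0$ implies $\mathcal M_i^0\geqslant 0$, and that the projection step preserves the sign of $A$ (the reconstructed Gibbs equilibrium $\mathcal M_i^{n+1}$ inherits $A_i^{n+1}\geqslant 0$ and $\chi\geqslant 0$). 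One must be slightly careful at pressurized cells, where $b$ also contains the $c^2$ term; but the support bound $[u-Mb,u+Mb]$ still holds verbatim, so the same estimate goes through.

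\emph{Natural treatment of flooding/drying.} The third point is essentially a remark rather than a computation: when $A_i^n=0$ one has $\mathcal M_i^n\equiv 0$, so the update \eqref{cindiscret} at cell $m_i$ is fed purely by the transmission fluxes from the neighbours, with no division by $A_i^n$ anywhere in \eqref{MicroscopicInterfaceFluxes} (the potential barrier $\Delta\phi_{i+1/2}^n$ is built from $\WW$ and $\BB$, and $\BB$ appears only through the integral \eqref{DefinitionOfNonConservativeProductPhi}, which is finite even as $A\to 0$ since $I_2/A$ and $\overline Z$ have finite limits for the relevant geometries). Thus no cut-off function is needed, and a dry cell that receives particles from a wet neighbour simply becomes wet at the next step, while a cell whose neighbours send nothing stays dry — this is exactly the ``natural'' drying/flooding behaviour. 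I would phrase this as: the scheme is well defined for $A_i^n\geqslant 0$ (not merely $>0$), and by point 2 the set $\{A\geqslant 0\}$ is invariant, so the algorithm never leaves its domain of definition.

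The main obstacle is the first point: verifying that the reflection terms and the change of variables in the transmission terms conspire to give $(\F_{i+1/2}^-)_1=(\F_{i+1/2}^+)_1$ exactly, so that the flux through each interface is single-valued for the $A$-component. This is the place where the specific algebraic form of \eqref{MicroscopicInterfaceFluxes} — in particular the matching of the two ``transmission'' branches under $\xi\mapsto\mp\sqrt{\xi^2\pm 2g\Delta\phi}$ — really has to be used; everything else is either a sign inspection or a routine induction.
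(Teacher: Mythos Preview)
Your strategy matches the paper's almost exactly: show $(\F_{i+1/2}^-)_1=(\F_{i+1/2}^+)_1$ by the change of variables $\mu=\xi^2\mp 2g\Delta\phi_{i+1/2}^n$ in the transmission integrals, bound $f_i^{n+1}(\xi)$ below by $(1-\sigma|\xi|)\mathcal M_i^n(\xi)$ plus nonnegative terms and invoke the compact support of $\chi$, and for the third point argue that $\mathcal M_i^n$ vanishes when $A_i^n=0$.

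Two remarks on execution. In part~1 your wording about the reflection pieces is off: they are not ``odd on symmetric-in-sign domains'' and do not contribute zero individually. Rather, the reflection term $\int_{\xi<0,\,\xi^2<2g\Delta\phi}\xi\,\mathcal M_i^n(-\xi)\,d\xi$ becomes, after $\xi\mapsto-\xi$, a piece that combines with the positive-transmission term $\int_{\xi>0}\xi\,\mathcal M_i^n(\xi)\,d\xi$ to yield exactly $\int_{\eta>\sqrt{2g\Delta\phi}}\eta\,\mathcal M_i^n(\eta)\,d\eta$, which is what the positive-transmission term on the $(\F^+)_1$ side produces under the substitution $\eta=\sqrt{\xi^2+2g\Delta\phi}$. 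The equality comes from this cancellation, not from oddness.

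In part~3 you have skipped the paper's actual content. You assert $\mathcal M_i^n\equiv 0$ when $A_i^n=0$, but this is an indeterminate form: for a free-surface cell $b=\sqrt{gI_1(x,A)\cos\theta/A}$ also tends to $0$, and $u=Q/A$ is undefined. The paper's point~3 is precisely the computation $\frac{A}{b}\sim\sqrt{\dfrac{A}{gI_1(x,A)\cos\theta}}\to 0$ as $A\to 0^+$, which together with the compact support of $\chi$ forces $\mathcal M_i^n\to 0$ regardless of how $u$ behaves. Your discussion of $\Delta\phi$ remaining finite is not the issue the paper addresses (and the claim that $I_2/A$ has a finite limit as $A\to 0$ is geometry-dependent and not verified here); the essential step you need is the $A/b\to 0$ limit.
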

\begin{proof}
We will adapt the proof of \cite{PS01} to show the three properties that verify  the kinetic scheme.

1. Let us denote the first component of the discrete fluxes \eqref{fluxdiscret}
$\left(F_A\right)^\pm_{i+\frac{1}{2}}$:
\begin{equation*}
\left(F_A\right)^\pm_{i+\frac{1}{2}}
\overset{def}{=} \int_\R \xi \mathcal{M}^\pm_{i+\frac{1}{2}}(\xi)\,d\xi
\end{equation*}
An easy computation, using the change of variable $\mu = \abs{\xi}^2 - 2g\phi_{i+\frac{1}{2}}^{n}\,$, allows us
to show that:
\begin{equation*}
\left(F_A\right)^+_{i+\frac{1}{2}} = \left(F_A\right)^-_{i+\frac{1}{2}} \: .
\end{equation*}
2. Suppose that for every $i\in [0,N+1]$, $A_i^{n}>0$. Let us denote $\xi_{\pm} = \max(0,\pm\xi)$  and  $\sigma=\dsp\frac{\Delta t^n}{\max_{i} h_i}$.
From Equation \eqref{cindiscret},  we get the following equation:
$$\begin{array}{lll}
 f_i^{n+1}(\xi)  & =&  (1-\sigma |\xi|)\mathcal M_i^n(\xi)\\
                 &   & + \sigma \xi_+ \bigg[\mathds{1}_{\{|\xi|^2+2g\Delta \phi_{i+1/2}<0\}}\mathcal
M_i^n(-\xi)\\ 
                 &   & +  \mathds{1}_{\{|\xi|^2+2g\Delta \phi_{i-1/2}>0\}}\mathcal
M_{i-1}^n\left(\displaystyle\sqrt{|\xi|^2+2g\Delta
\phi_{i+1/2}}\right)\bigg]  \\
		 &   &  +  \sigma \xi_- \bigg(\mathds{1}_{\{|\xi|^2-2g\Delta \phi_{i+1/2}<0\}}\mathcal
M_i^n(-\xi)\\  
                 &   &  + \mathds{1}_{\{|\xi|^2-2g\Delta \phi_{i-1/2}>0\}}\mathcal
M_{i+1}^n\left(\displaystyle-\sqrt{|\xi|^2-2g\Delta
\phi_{i+1/2}} 
\right)\bigg)  \ . 
\end{array}$$
Since the function $\chi$ is compactly supported $\textrm{ if } \abs{\xi - u_i^n}\geqslant M b_i^n \textrm{ then } \mathcal{M}_i^n(\xi) = 0 .$
Thus $$f_i^{n+1}(\xi)\geqslant 0 \textrm{ if } \abs{\xi - u_i^n}\geqslant M b_i^n \,,$$ 
as a sum of non negative terms.\\
On the other hand, for $\dsp \abs{\xi - u_i^n}\leqslant M b_i^n$, using the CFL condition   $0 < \sigma \abs{\xi}\leqslant 1$, 
for all $i$, $f_i^{n+1}\geqslant 0$ since it is a convex combination of non negative terms. 

Finally we have $\forall i \in [0,N+1] \,,\, f^{n}_{i}  \geq 0$. Since $ \forall i \in [0,N+1] \,,\, A_i^{n+1} = \displaystyle \int_{\R} f_i^{n+1}(\xi)\,d\xi,$ we finally get  
$ \forall i \in [0,N+1] \,,\ A_i^{n+1} \geq 0.$\\
$3.$ Suppose $A_i^{n} = 0$. Of course, in the mesh $i$  the flow is a free surface flow. 
So using the definition of $\mathcal{M}$, and the fact that the function $\chi$ is compactly
supported, the only term that may cause problem is $\dsp \frac{A}{b(t,x)}.$

But since when $E=0$, we have (thanks to Remark \ref{remB}):
$$\dsp \frac{A}{b(t,x)} = \sqrt{\frac{A}{g \overline{y}\cos\theta}} \  .$$
Thus, for all the usual pipe geometries (rectangular, trapezoidal, circular, ...) an easy computation gives:  
$$\dsp  \lim_{\underset {A \geq 0} {A \rightarrow 0}} \frac{A}{b(t,x)}  =  0 \; .$$ 
Otherwise, we add as an assumption that for the considered pipe geometry, we suppose that:
$$\dsp  \lim_{\underset {A \geq 0} {A \rightarrow 0}} \sqrt{\frac{A}{ \overline{y}}} = 0 \ .$$
Therefore, we get $\mathcal{M}_i^n(\xi) = 0$. This is the reason why we say
that the kinetic scheme treats ``naturally'' the flooding zones.

\end{proof}
\subsection{The kinetic scheme with transition points}\label{SectionTheKineticSchemeWithTransitionPoints}
The transition points are characterized by the points of the pipe where the flow is not of the same type on the left hand side and the right hand side of these points.
More precisely, they are characterized by their localization in the pipe and the speed of propagation $w$ of the change of the sate. 
We will assume that it exists at most a finite set of transition points and we will consider that these transition points are located at the interface 
between two cells of the mesh, i.e. a point $x_{i+1/2}$ of the mesh is a transition point if $E_i\neq E_{i+1}$.
The speed of propagation of the interface defines a discontinuity line $x=w \ t$ and let us introduce $\UU^-=(A^-,Q^-)$ and  $\UU^+=(A^+,Q^+)$ the (unknown) states 
respectively on the left  hand side and on the right hand side of this line (see figure \ref{ExampleStateTransition}). 
The speed $w$ is related to the unknowns $\UU^{\pm}$  by the Rankine-Hugoniot condition on the mass
equation $$\partial_t A + \partial_x Q=0,$$ and thus $w$ is given by:
\begin{equation*}
w = \frac{Q^+-Q^-}{A^+-A^-}\,.
\end{equation*}
\begin{figure}[H]
 \begin{center}
 \includegraphics[height = 4.cm]{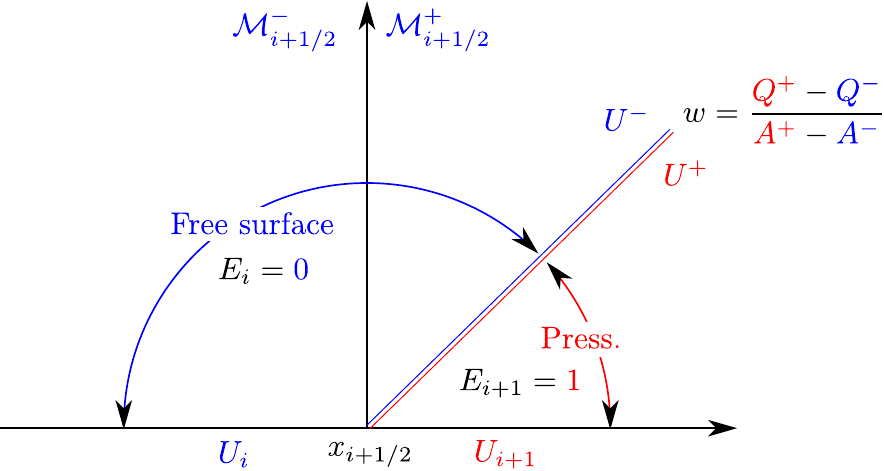}
  \caption{Rankine-Hugoniot condition through the line $x=wt$.}
  \small{Free surface state propagating downstream.}
  \label{ExampleStateTransition}
 \end{center}
\end{figure}
According to the left  $\UU^-$ and right unknowns $\UU^+$ at the interface $x_{i+1/2}$ and 
the sign of the speed $w$, we
have to deal with four cases:
\begin{itemize}
 \item pressurized state propagating downstream,
 \item pressurized state propagating upstream,
 \item free surface state propagating downstream,
 \item free surface state propagating upstream.
\end{itemize}
Assume that  $\UU^{\pm}$ are given then the kinetic scheme in the case of transition points reads:
\begin{equation*}
\UU_i^{n+1} = \UU_i^n+\frac{\Delta t^n}{h_{i}}\,(\F_{i+\frac{1}{2}}^{-}- \F_{i-\frac{1}{2}}^+)
\end{equation*}
where the numerical fluxes are computed by Equation \eqref{fluxdiscret} and the numerical
microscopic interface quantities
$\mathcal{M}_{i\pm1/2}^{\pm,n}$ are obtained according to the formulas
\eqref{MicroscopicInterfaceFluxes} and the sign of speed $w$ as
follows:
\begin{equation}\label{DefinitionMicroscopicFluxeW}
\begin{array}{l}
\mathcal{M}_{i+1/2}^{-} = 
    \left\{
       \begin{array}{lll}
         \mathcal{M}_{i+1/2}^{-}(\xi,\mathcal{M}_{i}^n,\mathcal{M}^{-})  &\textrm{ if }& w>0,\\
         \mathcal{M}_{i+1/2}^{-}(\xi,\mathcal{M}^{+},\mathcal{M}_{i+1}^{n})  &\textrm{ if }& w<0 ,\\
       \end{array}
    \right. \\  \\
\mathcal{M}_{i+1/2}^{+} = 
    \left\{
       \begin{array}{lll}
         \mathcal{M}_{i+1/2}^{+}(\xi,\mathcal{M}_{i}^n,\mathcal{M}^{-})  &\textrm{ if }& w>0,\\
         \mathcal{M}_{i+1/2}^{+}(\xi,\mathcal{M}^{+},\mathcal{M}_{i+1}^{n})  &\textrm{ if }& w<0.\\
       \end{array}
    \right. \\
\end{array}
\end{equation}
where $\mathcal{M}^{\pm}$ are the Gibbs equilibrium associated to $\UU^{\pm}$ according to the formula \eqref{Gibbs}.

A first way to compute  the unknowns $\UU^{\pm}$ called ``the ghost waves approach'' is to go back to the macroscopic level and to solve a linearized 
Riemann problem with discontinuous convection matrix.

The second way called ``\FKAl'' computes  the states $\mathcal{M}^{\pm}$  at the microscopic level and 
 the state $\UU^{\pm}$ are recovered by the relations \eqref{macroA}-\eqref{macroQ}.

Only two of four cases are considered since we have  two  couples of ``twin cases'': pressurized state is propagating
downstream (or upstream) as shown in figure \ref{plus1} and free surface state propagating
downstream (or upstream) as shown in figure \ref{plus2}.
\subsubsection{The ghost waves approach}\label{SectionTheGhostWavesApproach}
In order to specify the unknowns $\UU^{\pm}$, we have to define four equations. To this end,  the
ghost waves approach is a way to obtain a system of four equations related to the \textbf{PFS} system. 
Adding the equations $\partial_t Z=0$, $\partial_t \cos\theta=0$ and $\partial_t S=0$, the \textbf{PFS} model (without friction since it is taken into account in the term 
$\boldsymbol{Z}$ in the kinetic interpretation \eqref{KineticFormulationPFS}, \eqref{PFSSourceTermPhi}) can be written under a non-conservative form
with the variable $\VV =(Z,\cos\theta,S,A,Q)^t$:
$$\partial_t \VV+ D(\VV) \;\partial_x \VV =0,$$ with $D$ the convection matrix defined by
\begin{equation*}\label{ConvectionMatrix}
D(\VV) = \left(
\begin{array}{ccccc}
0 & 0 & 0 & 0 &  0\\
0 & 0 & 0 & 0 &  0\\
0 & 0 & 0 & 0 &  0\\
0 & 0 & 0 & 0 &  1\\
gA & gA\mathcal{H}(S_{w}) & \Psi(\VV) & c^2(\VV)-u^2 &  2u\\
\end{array}
\right)
\end{equation*}
where $\ds\Psi(\VV) = gS \partial_S \mathcal{H}(S_{w}) \cos\theta -c^2(\VV)\ds\frac{A}{S_{w}}$
and $u=Q/A$ denotes the speed of
the water. $c(\VV)$ is then  $c$ for the  pressurized flow  or  $\ds \sqrt{g\frac{A}{T(A)}\cos\theta}$ for the free surface flow.
\begin{rque}
Let us remark that, since $\partial_x I_1(x,A) = I_2(x,A)+\partial_A I_1(x,A) \partial_x
A$, the pressure source term $I_2$ does
not appear.
\end{rque}
We assume that the propagation of the interface (pressurized-free surface or  free
surface-pressurized) has a constant speed $w$ during a time step. 
The ghost waves approach consists to solve a linearized Riemann problem in each zone (see \cite{BEG09}):
\begin{equation*}
\left\lbrace
\begin{array}{lll}
\partial_t \VV + \widetilde{D} \;\partial_x \VV & = & 0,\\
\VV & = &
\left\lbrace
\begin{array}{lcr}
\VV_l & \hbox{ if }& x< w t,\\
\VV_r & \hbox{ if }& x> w t.
\end{array}
\right.
\end{array}
\right.
\end{equation*}

where $\ds \widetilde{D}=D(\widetilde{\VV})$. The term $\ds \widetilde{\VV}$ is some
approximation depending on the left
$\VV_{l}$ and the right $\VV_{r}$  state.

The half line  $x = w\,t$ is then the discontinuity
line of $\widetilde{D}$. Both states $\UU_{i}$ and  $\UU^-$ (resp. $\UU_{i+1}$ and $\UU^+$) correspond  to the same type of flow. 
Thus it makes sense to define the averaged matrices in each zone as follows:
\begin{itemize}
\item for $x < w\,t$, we set $\widetilde{D}_{i}=D(\widetilde{\VV}_{i})$ for some approximation
$\widetilde{\VV}_{i}$ which connects the state $\VV_{i}$ and $\VV^-$.

\item for  $x > w\,t$, we set  $\widetilde{D}_{i+1}=  D(\widetilde{\VV}_{i+1})$ for some
approximation $\widetilde{\VV}_{i+1}$ which connects the state $\VV^+$ and $\VV_{i+1}$.
\end{itemize}

Then we formally solve two Riemann problems and use the Rankine-Hugoniot jump conditions
through the line $x = w\,t$ which writes:
\begin{eqnarray}
Q^+ - Q^- & = & w\,(A^+ - A^-),\label{rh1}\\
F_2(A^+,Q^+) - F_2(A^-,Q^-)& =& w\,(Q^+ - Q^-),\label{rh2}
\end{eqnarray}
with $F_2$ defined by \eqref{QFlux}. In what follows, all quantities of the form
$\widetilde{v}_i$ (resp. $\widetilde{v}_{i+1}$)
define some approximation which connects the state $v_{i}$ and $v^-$ (resp. $v^+$ and $v_{i+1}$).
States can be connected, for instance, by the mean value of each state.
\paragraph{Pressurized state propagating downstream:}
it is the case when on the left hand side of the line $\xi = w t$, we have a pressurized flow while on the right hand side we have a free
surface flow and the speed $w$ of the transition point is positive.
Following Song \cite{SCL83} (see also \cite{M02}), an equivalent stationary hydraulic jump must
occur from a supercritical to a subcritical
condition and thus the characteristics speed satisfies the inequalities:
\begin{equation*}
\ut_{i+1}+\ct_{i+1}< w <\ut_{i} + c\,.
\end{equation*}
\begin{figure}[H]
\centering
\includegraphics[height=5cm]{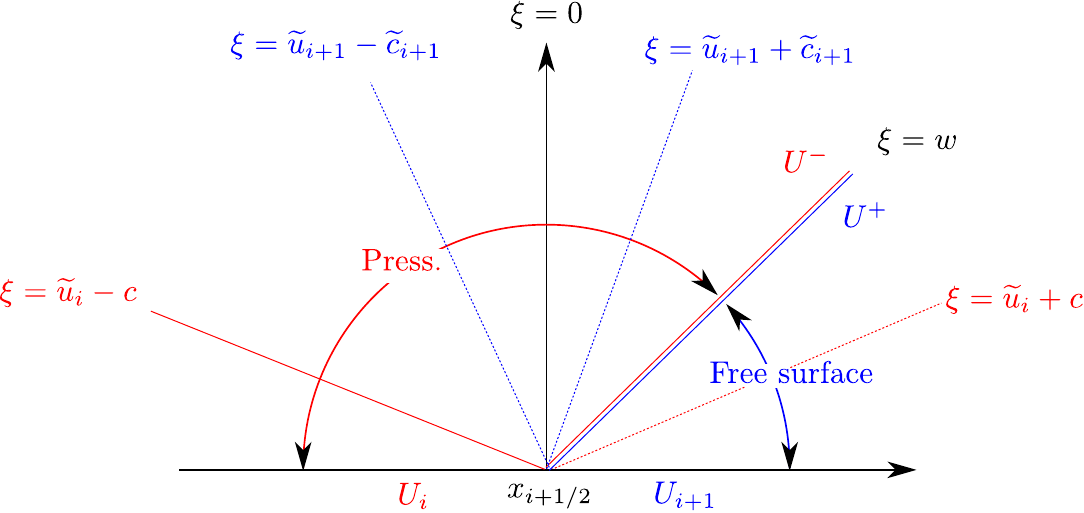}
\caption{Pressurized state propagating downstream.}\label{plus1}
\end{figure}
Therefore, only the characteristic lines drawn with solid lines are taken into account.
Indeed they are related to incoming waves
with respect to the corresponding space-time area $-\infty<\xi<w$. Conversely, the dotted line $
\xi=\ut_{i+1}-\ct_{i+1}$, for instance,
related to the free surface zone but drawn in the area of pressurized flow is a ``ghost wave'' and is not
considered. Thus $\UU^+=\UU_{i+1}$ and  $\UU_{i}$, $\UU^-$ are connected through the jumps across the characteristics $\xi=0$ and $\xi = \ut_i-c$.
Eliminating $w$ in the Rankine-Hugoniot jump relations \eqref{rh1}-\eqref{rh2}, we get $\UU^-$ as the
solution to the nonlinear system:

\begin{eqnarray}
\ds (F_2(A_{i+1},Q_{i+1}) - F_2(A^-,Q^-)) = \frac{(Q_{i+1} - Q^-)^2 }{(A_{i+1} - A^-)} ,\nonumber\\
Q^- -Q_{i} - (A^- - A_{i})(\ut_{i} - c) + \frac{g \psi^{i+1}_{i} \,\At_{i}}{c + \ut_{i}}  & = & 0, \nonumber\\
\end{eqnarray}
where  $ \psi^{i+1}_{i}$ is the upwinded source term $$
\boldsymbol{Z_{i+1}}-\boldsymbol{Z_{i}}+\mathcal{H}(\widetilde{S_{w}})(\cos\theta_{i+1}-\cos\theta_{i})+\Psi(\widetilde{\VV})(S_{i+1}-S_{i})\,.$$
\paragraph{Free surface state propagating downstream:}
it is the case when on the left hand side of the line $\xi = w t$, we have a free surface flow while on the right hand side we have a pressurized flow and 
the speed $w$ of the transition point is  positive. Following again Song \cite{SCL83},  the characteristic speed
satisfies the inequalities:
\begin{equation*}
\ut_{i}+\ct_{i} < w < \ut_{i+1} + c
\end{equation*}
\begin{figure}[H]
\centering
\includegraphics[height=5cm]{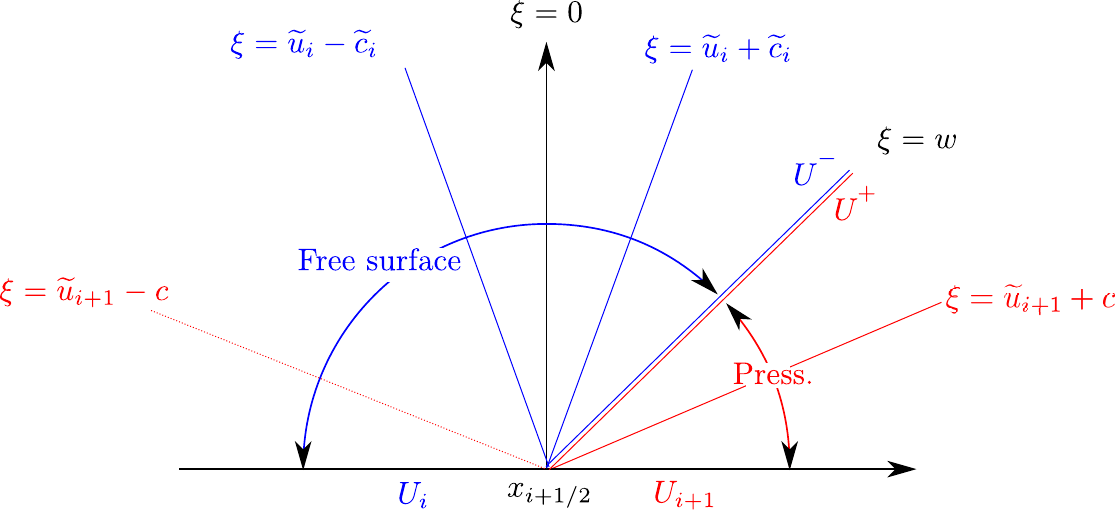}
\caption{Free surface state propagating downstream.} \label{plus2}
\end{figure}

There are two incoming characteristic lines with respect to the free surface area
$-\infty<\xi<w$ (actually three with $\xi=0$) and they can connect the given left state $\UU_{i}$ with any arbitrary free surface state.  Thus only
one  characteristic line ($\xi=\ut_{i+1} + c$) gives any information (it is  Equation \eqref{slchp1} above) as an incoming
characteristic line with respect to the pressurized zone $w<\xi<+\infty$. From the jump relations through the characteristic $\xi~=~0$, and
after the elimination of $w$ in the Rankine-Hugoniot jump relations \eqref{rh1},\eqref{rh2} we get another equation, namely Equation \eqref{slchp2} above.
It remains to close the system of four unknowns $(A^-,\,Q^-,\,A^+,\,Q^+)$.
Thus, from Equation \eqref{ThmPFSEquationForU}, we use the jump relation across the transition point (with speed $w$) for the total head $\ds
\Phi(A,Q,\cos\theta,\boldsymbol{Z},E) = \frac{u^2}{2} + c^2 \ln\left(\frac{A}{S_{w}}\right) + g\,\boldsymbol{\mathcal{H}}(A)\cos\theta + g\,\boldsymbol{Z}$, which writes:
\begin{equation}\label{sautdecharge}
\Phi^+ - \Phi^- = w\,(u^+ - u^-)\,.
\end{equation}

Finally, we use the relation:
$$ w = w_{pred} \, \textrm{ with } w_{pred} = \frac{Q_{i+1}-Q_{i}}{A_{i+1}-A_{i}}\,,$$
which is the predicted speed of the discontinuity.
We have then to solve the nonlinear system:
\begin{eqnarray}
(Q_{i+1} - Q^+)  = (A_{i+1} - A^+)\,(\ut_{i+1} + c), \label{slchp1}\\
\nonumber\\
(Q^+ - Q^-)\,(Q_{i+1} - Q_{i})  =  (A_{i+1} - A_{i})\, (F_2(A^+,Q^+) - F_2(A^-,Q^-)),\label{slchp2}\\
\nonumber\\
\frac{(Q^+)^2}{2\,(A^+)^2} + c^2 \ln\left(A^+\right) + g\cos\theta\,\mathcal{H}(A^+)
- \frac{(Q^-)^2}{2\,(A^-)^2}- c^2 \ln\left(A^-\right)- g\cos\theta\,\mathcal{H}(A^-) ,\nonumber\\
=  \frac{Q_{i+1} - Q_{i}}{A_{i+1} - A_{i}}\,\left( \frac{Q^+}{A^+} - \frac{Q^-}{A^-}  \right),
\label{slchp3}\\
\nonumber\\
(Q_{i+1} - Q_{i})\, (A^+ - A^-) = (Q^+ - Q^-)\, (A_{i+1} - A_{i})\label{slchp4} \ .%
\end{eqnarray}
\subsubsection{The \FKAl}\label{FKA}
As we will show in the numerical experiments part of this article, the  ``ghost waves approach'' method seems to produce very good
results in agreement with experimental data. But, as we have shown in the previous section, to solve the interface problem between the free surface and
 the pressurized part of the flow, we have expressed mathematical relations between the macroscopic unknowns $A$ and $Q$ whereas 
 in the free surface part as well as the pressurized part of the flow, we only deal with the microscopic quantities $\mathcal{M}_{i}^n$ and we have constructed
 kinetic fluxes at the interface that take into account every source terms.

At this point, we proposed to solve the interface problem at the microscopic level.
This method is based on the generalized characteristics method applied to the transport equation
\eqref{KineticFormulationPFS}, see \cite{Da89,TheseErsoy}.

As in the previous method, we will only consider two generic cases: pressurized state propagating downstream, as shown in figure  \ref{chslpluscin},  
and free surface state propagating downstream, as shown in figure  \ref{slchpluscin}.
 For these two cases, we have to determine the Gibbs equilibrium $\M^\pm$  corresponding to the macroscopic states
$\UU^\pm$ on both sides of the curve $\Gamma$ representing the trajectory of the transition point whose unknown velocity is $w$. 
On each time step, this trajectory is supposed to be a line and $w$ must be defined such that the Rankine Hugoniot relations are satisfied.

We will now use the Gibbs equilibrium $\M^-$ instead of  $\M_{i+1}$ in the computations of the microscopic fluxes,
see formula \eqref{DefinitionMicroscopicFluxeW}, for the case $w>0$

The transport equation \eqref{KineticFormulationPFS} is used only for the transmission cases since we are only interested in trajectories which
intersect $\Gamma$.

At the microscopic level, we obtain:
\begin{equation*}
 \forall \xi>w,\quad \M^-(\xi)\,\mathds{1}_{\{|\xi|^2+2\,g\,\Delta \phi^n_{i+1/2}>0\}}=
 \M_i^n\left(\sqrt{|\xi|^2+2\,g\,\Delta \phi^n_{i+1/2}}\;\right)\,\mathds{1}_{\{|\xi|^2+2\,g\,\Delta \phi^n_{i+1/2}>0 \}}, 
\end{equation*} 
and
\begin{equation*}
 \forall \xi<w,\quad \M^+(\xi)=\M_{i+1}^n(\xi).
\end{equation*} 

These relations give, at the macroscopic level:

\begin{equation}\label{macroMiM-}
\int_w^{+\infty}\left(\begin{array}{c}
1 \\
\xi
\end{array}\right)\M^-(\xi)\,\mathds{1}_{\{|\xi|^2+2\,g\,\Delta \phi^n_{i+1/2}>0
\}}d\xi=\int_w^{+\infty}\left( \begin{array}{c}
1 \\
\xi
\end{array}\right)\M_i^n\left(\sqrt{|\xi|^2+2\,g\,\phi^n_{i+1/2}}\; \right)\,
\mathds{1}_{\{|\xi|^2+2\,g\,\phi^n_{i+1/2}>0 \}}d\xi 
\end{equation} 
and
\begin{equation}\label{macroMi+1M+}
\int_{-\infty}^w\left( \begin{array}{c}
1 \\
\xi
\end{array}\right)\, \M^+(\xi)\,d\xi=\int_{-\infty}^w\left( \begin{array}{c}
1 \\
\xi
\end{array}\right)\,\M_{i+1}^n(\xi)\,d\xi.
\end{equation} 
For the sake of simplicity, let us denote $c(A)$ the sound speed $b(x,A,E)$ defined by equation \eqref{soncinetique}.

Let us mention that some preceding relations are obvious since the support of $\M(\xi)$ is $[u\pm c(A)\sqrt{3}]$.
For instance for a free surface flow propagating downstream (or for the twin case of a pressurized flow propagating upstream),
in the free surface zone, we have to compare in Equations  \eqref{macroMiM-} the velocity of the interface 
$w$ with $u+c(A)\sqrt{3}$  while in the pressurized zone $u+c(A)\sqrt{3}$ is very large . The only corrective term is due to the slope.

From now on, we omit the indexes $n$ to simplify the notations and every computations are done with the density function $\chi$ defined by 
\eqref{chifunction}.

Let us define the ``effective'' boundary in the integral used in Equations \eqref{macroMiM-}:
\begin{eqnarray}
w'&=&\max\Big(w,\sqrt{2\,g\,\max(0,-\phi_{i+1/2}})\Big)\label{wprime} ,\\
 w''&=&\max\left(\sqrt{\max(0,w^2+2\,g\,\Delta \phi^n_{i+1/2}}),\sqrt{2\,g\,\max(0,\phi_{i+1/2}})\right) \label{wseconde} \ .
\end{eqnarray}

Equations  \eqref{macroMiM-} write:
\begin{eqnarray}
 \frac{A^-}{c(A^-)}\,(\delta^- - \gamma^-) & =
&\frac{A_i}{c(A_i)}\,\left(\sqrt{\delta_i^2-2\,g\,\phi_{i+1/2}}-\sqrt{\gamma_i^2-2\,g\,
\phi_{i+1/2}}\right)\label{unmoins},\\
\frac{A^-}{c(A^-)}\, ((\delta^-)^2 - (\gamma^-)^2) & = &\frac{A_i}{c(A_i)}\,
(\delta_i^2-\gamma_i^2)\label{deuxmoins},
\end{eqnarray}
where $\gamma^-,\,\delta^-,\,\gamma_i,\,\delta_i$ are defined by:
$$\gamma^-=\max\left(w',u^--c(A^-)\sqrt{3}\right),\quad \delta^-=\max\left(w',u^-+c(A^-)\sqrt{3}\right),$$
 and
$$\gamma_i=\max\left(w'',u_i-c(A_i)\sqrt{3}\right),\quad \delta_i=\max\left(w'',u_i+c(A_i)\sqrt{3}\right) \ .$$

In the same way, Equations \eqref{macroMi+1M+} write:

\begin{eqnarray}
 \frac{A^+}{c(A^+)}\,(\beta^+ - \alpha^+) & =
&\frac{A_{i+1}}{c(A_{i+1})}\,(\beta_{i+1} - \alpha_{i+1})\label{unplus},\\
\frac{A^+}{c(A^+)}\,((\beta^+)^2 - (\alpha^+)^2) & =
&\frac{A_{i+1}}{c(A_{i+1})}\,(\beta_{i+1}^2 - \alpha_{i+1}^2)\label{deuxplus},
\end{eqnarray}
where $\alpha^+,\,\beta^+,\,\alpha_{i+1},\,\beta_{i+1}$ are defined by:
$$\alpha^+=\min\left(w,u^+-c(A^+)\sqrt{3}\right),\quad \beta^+=\min\left(w,u^++c(A^+)\sqrt{3}\right),$$
and
$$\alpha_{i+1}=\min\left(w,u_{i+1}-c(A_{i+1})\sqrt{3}\right),\quad
\beta_{i+1}=\min\left(w,u_{i+1}+c(A_{i+1})\sqrt{3}\right).$$
Taking into account Equation \eqref{unplus},  Equation \eqref{deuxplus} becomes: 
\begin{equation*}
\alpha^+ +  \beta^+ = \alpha_{i+1} + \beta_{i+1} \ .
\end{equation*} 

The unknowns are still $w,\,A^-,\,Q^-,\,A+,\,Q^+$. 
\paragraph{Pressurized state propagating downstream:}
let us first remark that in this case, since in the mesh $i$ the flow is pressurized, we always have  $w' \leq u_i+c(A_i)\sqrt{3}$ so that $\gamma^- - \delta^- \neq 0$.
We solve then the system formed by the  Rankine-Hugoniot jump conditions \eqref{rh1}-\eqref{rh2}, and Equations \eqref{unmoins},  \eqref{deuxmoins},
and  \eqref{unplus} which formed a full nonlinear system of 5 equations with 5 unknowns. 
So this procedure privileges the information coming from the zone containing the cell interface $x_{i+1/2}$.

\paragraph{Free surface state propagating downstream:}
again, we solve the nonlinear system formed by Equations \eqref{rh1}-\eqref{rh2},  \eqref{unmoins}, \eqref{deuxmoins}
 and \eqref{unplus} and the experience acquired with the industrial code  \verb+FlowMix+ has shown that the system is a  full nonlinear system of 
 5 equations with 5 unknowns.

But since in the mesh $i$ the flow is a free surface one, we may have the critical case where  $w'>u_i+c(A_i)\sqrt{3}$ with $w'$ defined by Equation \eqref{wprime}.
In this case $\gamma^- - \delta^-   = 0$ and the system formed by the nonlinear equations  \eqref{rh1}-\eqref{rh2},  \eqref{unmoins}, \eqref{deuxmoins} 
and \eqref{unplus} is under determined.
In this case, as in the ghost waves approach, we replace $w$ by $w_{pred} = \ds\frac{Q_{i+1}-Q_{i}}{A_{i+1}-A_{i}}$ and we solve the system formed by 
 the  Rankine-Hugoniot jump conditions  \eqref{rh1}, \eqref{rh2} and \eqref{sautdecharge}, as in the ghost waves approach, completed by Equations 
 \eqref{unplus} and \eqref{deuxplus}.

\begin{figure}[H]
\begin{center}
\subfigure[Pressurized state propagating downstream. \label{chslpluscin}]
{
\includegraphics[height=5cm]{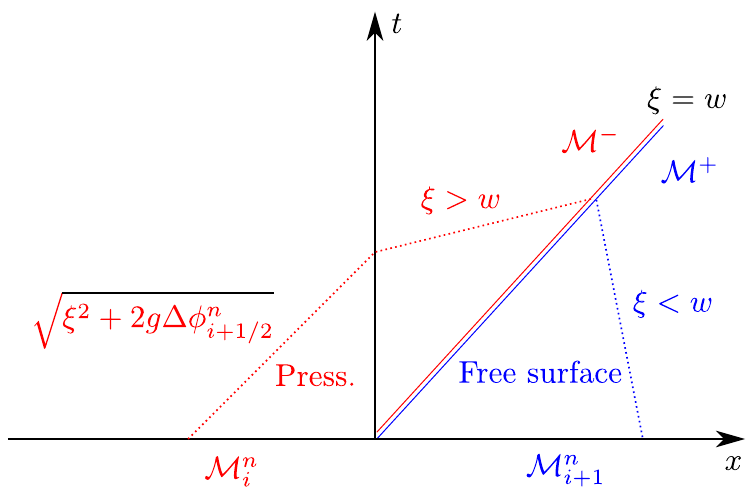}
}
\subfigure[Free surface state propagating downstream. \label{slchpluscin}]
{
\includegraphics[height = 5cm]{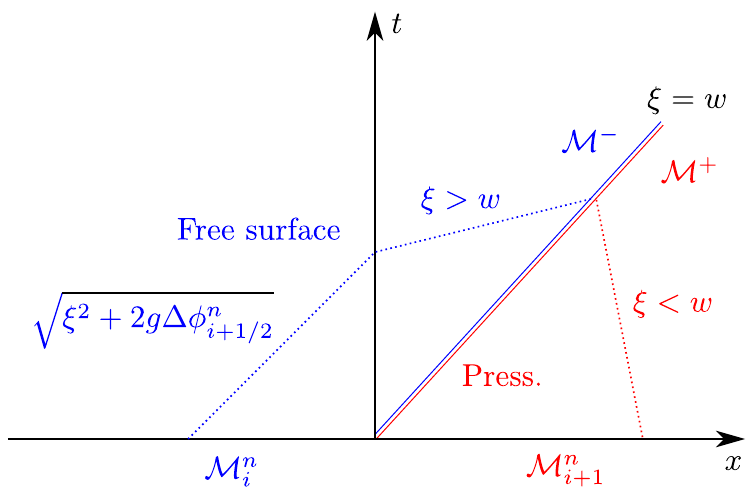}
}
\caption{\FKAl  \: for the case of a transition point.}
\end{center}
\end{figure}

\subsection{Boundary conditions}\label{SectionBoundaryConditionsWithNoTransitionPoints}
Let us recall that $x_{1/2}$ and $x_{N+1/2}$ are respectively the upstream and the downstream ends of the pipe. 
At this stage, we have computed all the ``interior'' states at time $t^{n+1}$, that is $(\UU_{i}^{n+1})_{i=1,N}$ are computed, and also the state of the ``interior'' cells
$(E_{i}^{n+1})_{i=1,N}$ by the method presented in section \ref{SectionUpdatingTheStateEInACell}.

The upstream state $\UU_{0}$ corresponds to the mean value of $A$ and $Q$ on the ``fictitious'' cell $m_0=(x_{-1/2},x_{1/2})$ (at the left of the upstream
boundary of the pipe) and the downstream state $\UU_{N+1}$ corresponds to the mean value of $A$ and $Q$ on the ``fictitious'' cell $m_{N+1}=(x_{N+1/2},x_{N+3/2})$
(at the right of the downstream end of the pipe).

Usually,  we have to prescribe one boundary condition related to the state vectors
$\UU_0^n$ and $\UU_{N+1}^n$ (there is generically only one incoming characteristics curve  for the upstream and  only one outgoing characteristics for the downstream).
For instance, at the upstream end of the pipe, one of the following boundary conditions may be prescribed (we omit the index $n+1$ for the sake of simplicity): 
\begin{enumerate}
 \item the water level is prescribed. So let $H_{up}(t)$ be a given function of time. Then we have:  
\begin{equation}\label{amonthauteur}
\forall t > 0 ,\quad  \ds \frac{c^2}{g}\ln\left(\frac{A_0(t)}{S_{w 0}(t)}\right) + \mathcal{H}(S_{w 0}(t))\cos\theta_0+\mathbf{Z_0} = H_{up}(t) \ .
\end{equation}
 \item the discharge is prescribed. So let $Q_{up}(t)$ be a given function of time. Then we have: 
 \begin{equation}\label{amontdebit}
\forall t > 0 ,\quad \ds Q_0(t) = Q_{up}(t) \ .
\end{equation}
 \item the total head may be prescribed. So let $\Phi_{up}(t)$ be a given function of time. Then we have:  
 \begin{equation}\label{amontcharge}
 \forall t > 0,\quad \frac{Q^2_0(t)}{2\,A_0(t)} + c^2\ln\left(\frac{A_0(t)}{S_{w 0}(t)}\right) + g\mathcal{H}(S_{w 0}(t))\cos\theta_0+g\boldsymbol{Z}_0 = \Phi_{up}(t) \ .
 \end{equation}
 \end{enumerate}
At the downstream end, similar boundary conditions may be defined.
In order to find a complete state for the upstream boundary, $\UU_{0}^{n+1}$, and the downstream boundary, $\UU_{N+1}^{n+1}$, we have to define the
missing equation $b_{up}$ and $b_{down}$ respectively. 
We will present the method at the upstream boundary of the pipe (it is easy to adapt it at the downstream boundary).

At this stage, we have to consider two cases:
\begin{enumerate}
\item the upstream boundary is not a transition point, that is $E_{0} = E_{1}$,
\item  the upstream boundary is a transition point, that is $E_{0} \neq E_{1}$.
\end{enumerate}
We will treat the first case at the microscopic level while for the second case, we will describe the ``\FKAl'' (the ``ghost waves approach'' for interior cells
may be easily adapted).

\subsubsection{The boundary is not a transition point}

The jump $\Delta \phi^n_{1/2}$  is thus computed on the first half mesh.

Figure  \ref{amontcin1}  represents the case when the upstream boundary of the pipe is not a transition point between free surface and pressurized flow.
The Gibbs equilibrium $\mathcal{M}_0$ must be determined and we recall that we know either $A_{0}$ (if the upstream water level is prescribed by Equation
\eqref{amonthauteur}), $Q_{0}$ (if the upstream discharge is prescribed by Equation \eqref{amontdebit}) or a relation between these quantities 
(if the upstream total head is prescribed by Equation \eqref{amontcharge}).

\begin{figure}[H]
\centering
\includegraphics[height=4cm]{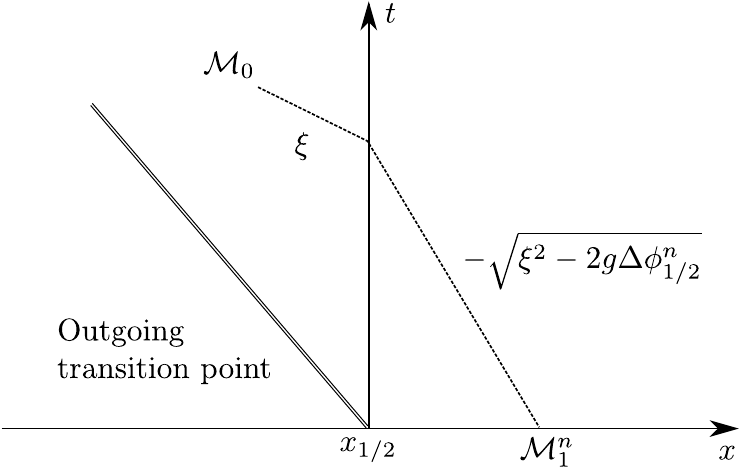}
\caption{The upstream boundary of the pipe is not a transition point.} \label{amontcin1}
\end{figure}
To obtain another equation denoted $b_{up}(A_{0},Q_{0})$, we will use the information transmitted 
from the outgoing kinetic characteristic (see figure \ref{amontcin1}).
At the macroscopic level, we obtain:
\begin{equation*}
 \int_{-\infty}^{\xi_0}\left( \begin{array}{c}
 1 \\
 \xi
 \end{array}\right)\, \M_0(\xi)\,d\xi=\int_{-\infty}^{\xi_0}\left( \begin{array}{c}
 1 \\
 \xi
 \end{array}\right)\,\M_1^n\left(-\sqrt{|\xi|^2-2\,g\,\Delta \phi^n_{1/2}}\;\right)\,d\xi, 
 \end{equation*} 

with $\ds\xi_0 = -\sqrt{2\,g\,\max(0,\Delta \phi^n_{1/2})}$. This also writes as:

\begin{eqnarray}
 \frac{A_0}{c(A_0)}\,(\delta_0 - \gamma_0) & =
&\frac{A_1}{c(A_1)}\,\left(\sqrt{\gamma_1^2+2\,g\,\phi_{1/2}}-\sqrt{\delta_1^2+2\,g\,
\phi_{1/2}}\right)\label{unamont},\\
\frac{A_0}{c(A_0)}\, (\delta_0^2 - \gamma_0^2) & = &\frac{A_1}{c(A_1)}\,
(\delta_1^2-\gamma_1^2)\label{deuxamont},
\end{eqnarray}
where $\gamma_0,\,\delta_0,\,\gamma_1,\,\delta_1$ are defined by:

$$\gamma_0=\min\left(\xi_0,u_0-c(A_0)\sqrt{3}\right),\quad \delta_0=\min\left(\xi_0,u_0+c(A_0)\sqrt{3}\right),$$
and
$$\gamma_1=\min\left(\xi_1,u_1-c(A_1)\sqrt{3}\right),\quad \delta_1=\min\left(\xi_1,u_1+c(A_1)\sqrt{3}\right),$$
and $\xi_1=-\sqrt{2\,g\,\max(0,-\phi^n_{1/2})}$.

So we will use only one of the two equations \eqref{unamont} or  \eqref{deuxamont} as $b_{up}$.

Let us mention that in the case when $\gamma_1=\delta_1$, Equations  \eqref{unamont} and \eqref{deuxamont} are useless. This happens when:
 \begin{itemize}
 \item the flow is an incoming supcritical free surface flow at the upstream boundary condition (this could be due to a high slope inducing a great $\abs{\xi_1}$).
 In this case, we impose the critical flow that is $u_{0} = c(A_{0})$. 
 \item the flow is an outgoing supcritical free surface flow at the upstream boundary condition. The boundary conditions are then useless
 (two outgoing characteristics), so we impose $A_0=A_1$, $Q_0=Q_1$.
 \end{itemize}
\begin{rque} We will use
\begin{enumerate}
\item  Equation  \eqref{unamont} ``momentum of order 0'' if $Q_{0}$ is prescribed at the upstream end,
\item Equation  \eqref{deuxamont} ``momentum of order 1'' if $A_{0}$ is prescribed at the upstream end,
\item Equation \eqref{deuxamont} if the total head is prescribed. Since Equation \eqref{amontcharge} couples the two unknowns, 
we had the choice between the two equations \eqref{unamont} and \eqref{deuxamont}. The experience acquired with the industrial code 
\verb+FlowMix+ makes us to use Equation \eqref{deuxamont}.
\end{enumerate}
\end{rque}

\subsubsection{The boundary is a transition point}

We suppose that $E_{0} \neq E_{1}$. In the first step, we apply the procedure described in section \ref{FKA}:
the left state is the downstream state $\UU_{0}$, known at time $t^{n}$, and the right state is $\UU_{1}$ known at time $t^{n+1}$. We determine the state
$\UU^-$  at the left of the transition curve. As the two states $\UU_{0}$ and $\UU^{-}$ represent the same type of flow (free surface or pressurized),
we apply the preceding method by just replacing $\M_1^n$ by $\M^-$ in all the formula above (see figure \ref{amontcin2}).

\begin{figure}[H]
\centering
\includegraphics[height=4cm]{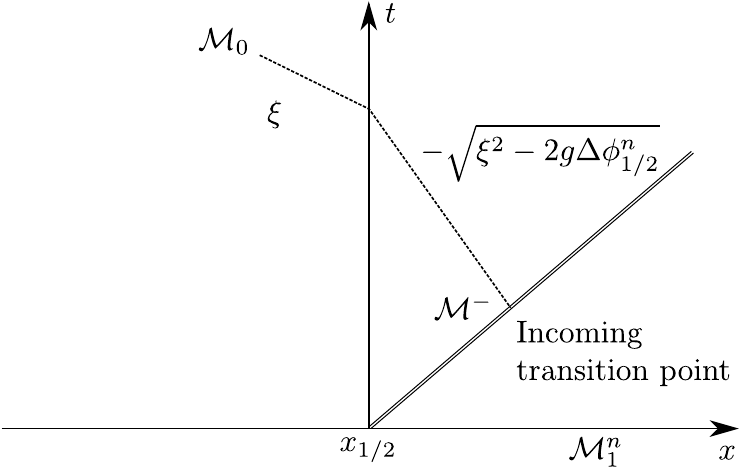}
\caption{The upstream boundary of the pipe is a transition point.} \label{amontcin2}
\end{figure}
\subsection{Updating the state of the mesh $E$}\label{SectionUpdatingTheStateEInACell}
To update the state of the mesh (see figure \ref{UpdateA}), we use a discrete version of the state
indicator $E$  equal to $1$ for a pressurized flow and $0$ otherwise. Following \cite{BG07}, after the computation of the wet area
$A_i^{n+1}$, we predict the state of the 
cell $m_i$ by the following criterion:
\begin{itemize}
\item[$\bullet$]  if $E_i^n = 0$ then:\\
if $A_i^{n+1}<S_i$ then   $E_i^{n+1}=0$, else $E_i^{n+1}=1$,

\item[$\bullet$]   if $E_i^n=1$:\\
if   $A_i^{n+1}\geq S_i$ then   $E_i^{n+1}=1$,
else $E_i^n=E_{i-1}^n\cdot E_{i+1}^n$.
\end{itemize}

Indeed, if $A_i^{n+1}\geq S_i$ it is clear that the mesh $m_i$ becomes pressurized, on the
other hand if $A_i^{n+1}<{S_i}$ in a
mesh previously pressurized, we do not know \textit{a priori} if the new state is free surface
($\rho=\rho_0$ and the value of the wet
area is less than ${S_i}$) or pressurized (in depression, with $\rho<\rho_0$ and the value of the
wet area is equal to $S_i$: see Remark \ref{RemarkDepressionOverpressureStates} and figure 
\ref{RhoCriterion}).\\
So far, as we do not take into account complex phenomena such as entrapment of air pockets or
cavitation and keeping in mind that the CFL condition, defined by \eqref{CFLcondition}, ensures that a transition point
crosses at most one mesh at each time step, we  postulate that:
\begin{enumerate}
\item if the mesh $m_i$ is free surface at time $t^{n}$, its state at time $t^{n+1}$ is only
determined by the value of $A_i^{n+1}$ and it
cannot become in depression.
\item if the mesh $m_i$ is pressurized at time $t^{n}$ and if $A_i^{n+1}<S_i$, it becomes free surface
if and only if at least one adjacent mesh was free surface  at time $t^{n}$. This is exactly the discrete version of the continuous
$\ds\frac{A}{S_{w}}$ criterion explained in
Remark \ref{RemarkDepressionOverpressureStates} and displayed on figure  \ref{RhoCriterion}.
\end{enumerate}

\begin{figure}[H]
 \begin{center}
 \includegraphics[height =7cm]{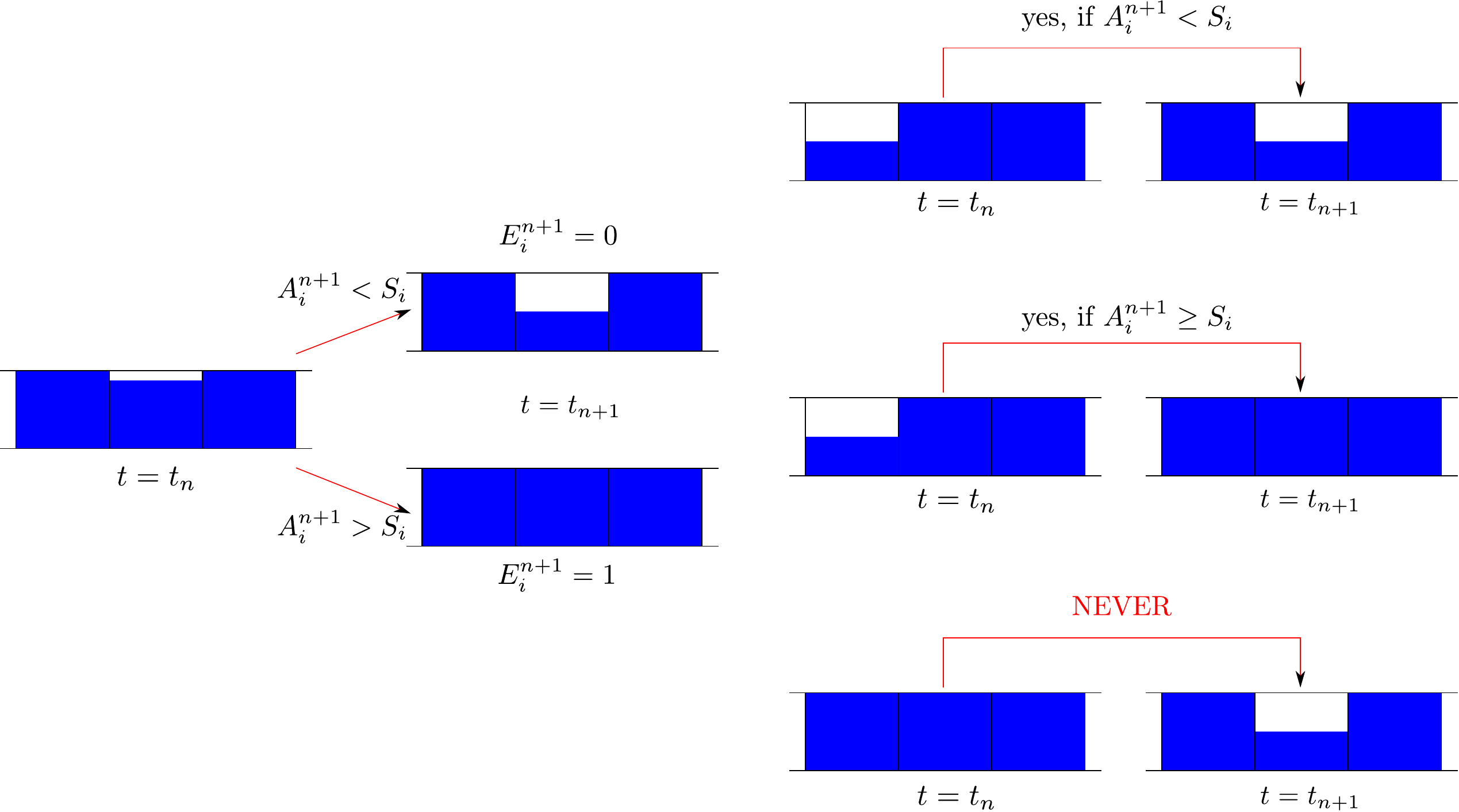}
  \caption{Update of the state $E_i^{n+1}$ of the mesh $m_i$.}
  \label{UpdateA}
 \end{center}
\end{figure}
\begin{rque}\label{RemarkDepressionOverpressureStates}
During a pressurized state, we may have $A>S$ or $A<S$. The case $A>S$ corresponds to
an overpressure while the second one is observed for the depression. As said before, the  main
difficulty to detect depression comes from
the fact that the criterion $A<S$ corresponds also to a free surface state. To dissociate these two
cases, we have to know if $\rho =
\rho_0$, that is also $\frac{A}{S_{w}}=1$
or not. On figure 
\ref{RhoCriterion}, we represent how to detect a depression, overpressure and free
surface state with the $\ds\frac{A}{S_{w}}$
criterion. To this end, we show  a physical situation at different time $t_i$, $i=0,\ldots,3$. We
draw the behavior of the interface speed
$w$ in the $(x,t)$-plane and the graph of the function $\ds\frac{A}{S_{w}}$ at fixed time $t_3$.
\begin{figure}[H]
 \begin{center}
 \includegraphics[height=17cm]{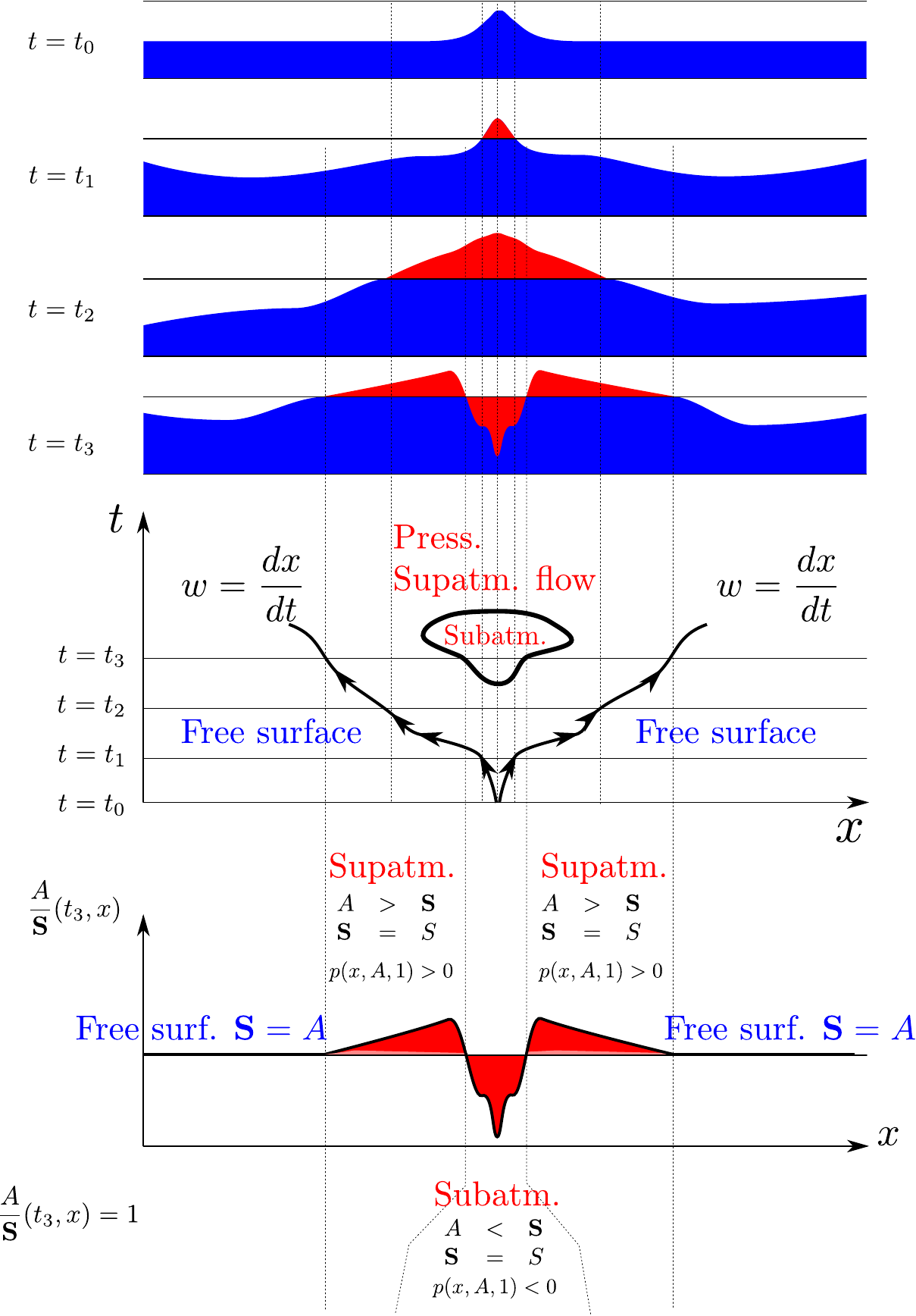}
  \caption{$\ds\frac{A}{S_{w}}$ criterion to detect depression area.}
  \label{RhoCriterion}
 \end{center}
\end{figure}
\end{rque}

\section{Numerical experiments}\label{numerics}
This section is devoted to a numerical validation of the model and the kinetic numerical scheme for the following main cases:
\begin{itemize}
\item single point pressurized flow: the Wiggert's test case,
\item a code to code validation for pressurized flow in uniform pipe,
\item numerical computations of steady states: free surface steady state in non uniform pipe as well as ``mixed'' steady state in uniform pipe. 
We present a numerical study for the order of the discretisation.
\item an example of drying and flooding flow,
\item and finally a comparison between the two approaches : the ghost wave approach and the \FKAl.
\end{itemize}
\subsection{Single point pressurised flow}\label{wigg}
In this section, we present  numerical results for the case of a single point pressurized flow, namely the test proposed by Wiggert
\cite{W72}. The numerical results are then compared with the experimental ones: a very good agreement between them is shown.
According to the experimental data, we were able to propose a value (or a range of values) for $c$ which seems physically relevant. 
On the contrary, in the Preissmann slot technique (\cite{W72,GAP94}) the value of $c$ is related to an arbitrary value (the width of the slot) and cannot exceed practically $10\:m/s$, otherwise the method becomes unstable.
The following test case, is due to Wiggert \cite{W72}.
The experimental device (see figure \ref{exp}) is an horizontal 10 $m$ long closed pipe with width 0.51 $m$ and height
$H = 0.148$ $m$.
The Manning number is $1/K_s^ 2= 0.012 \; s/m^{1/3}$. The initial conditions are a stationary state with the discharge $Q_0=0$ and the water
level $h_0=0.128 \, m$. \\
Then a wave coming from the left side causes the closed channel to pressurise.
The upstream condition is a given hydrograph ($y_2$ in figure \ref{Wdat}), at the downstream end, a step function is imposed:
the water level is kept constant to $h_0=0.128 \,m$ until  the wave reaches the exit.
At this time, the level is suddenly increased  (see $y_3$ in figure \ref{Wdat}).
For the computations, these boundary conditions have been read on Wiggert's article and rebuilt using piecewise polynomial interpolations (figure \ref{Wamav} below).\\
Other parameters are: 
$$
\begin{array}{lcl}
\hbox{Discretisation points}& : & 80,\\ 
\hbox{Delta x }(m)& : &0.125,\\
\hbox{CFL }& : & 0.5,\\
\hbox{Simulation time } (s) & : &18,\\
\hbox{Sound speed } (ms^{-1})& : &40.
\end{array}
$$

\vspace*{1cm}

\begin{figure}[H]
\centering
\includegraphics[height=3cm]{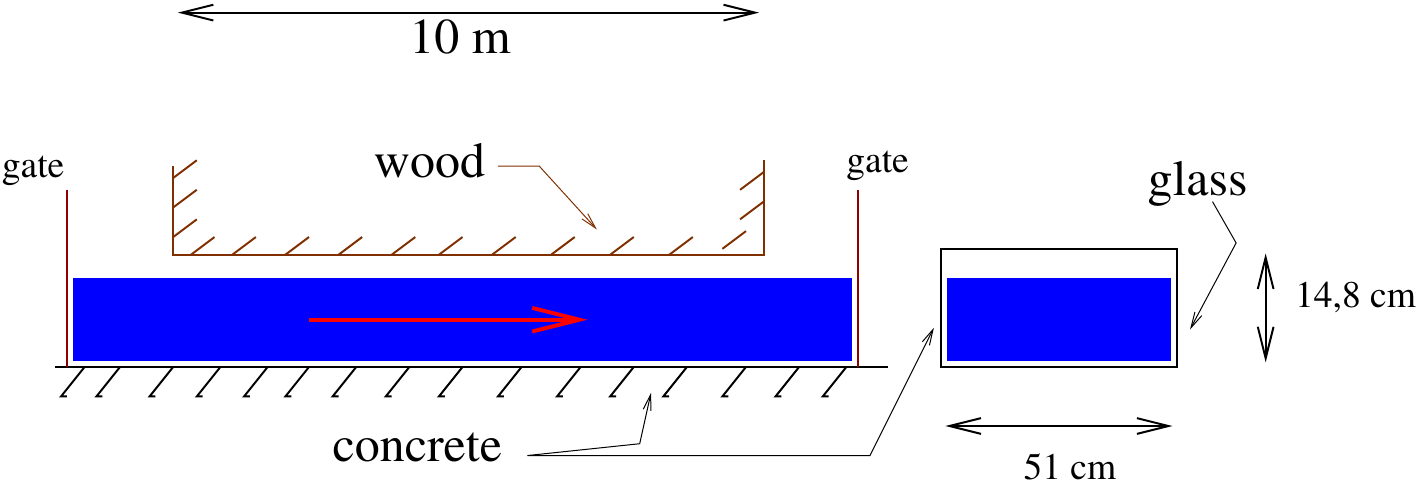}
\caption{Experimental device (adapted from Wiggert \cite{W72})\label{exp}.}
\end{figure}
\begin{figure}[H]
\centering
\includegraphics[height=6cm,angle=-90]{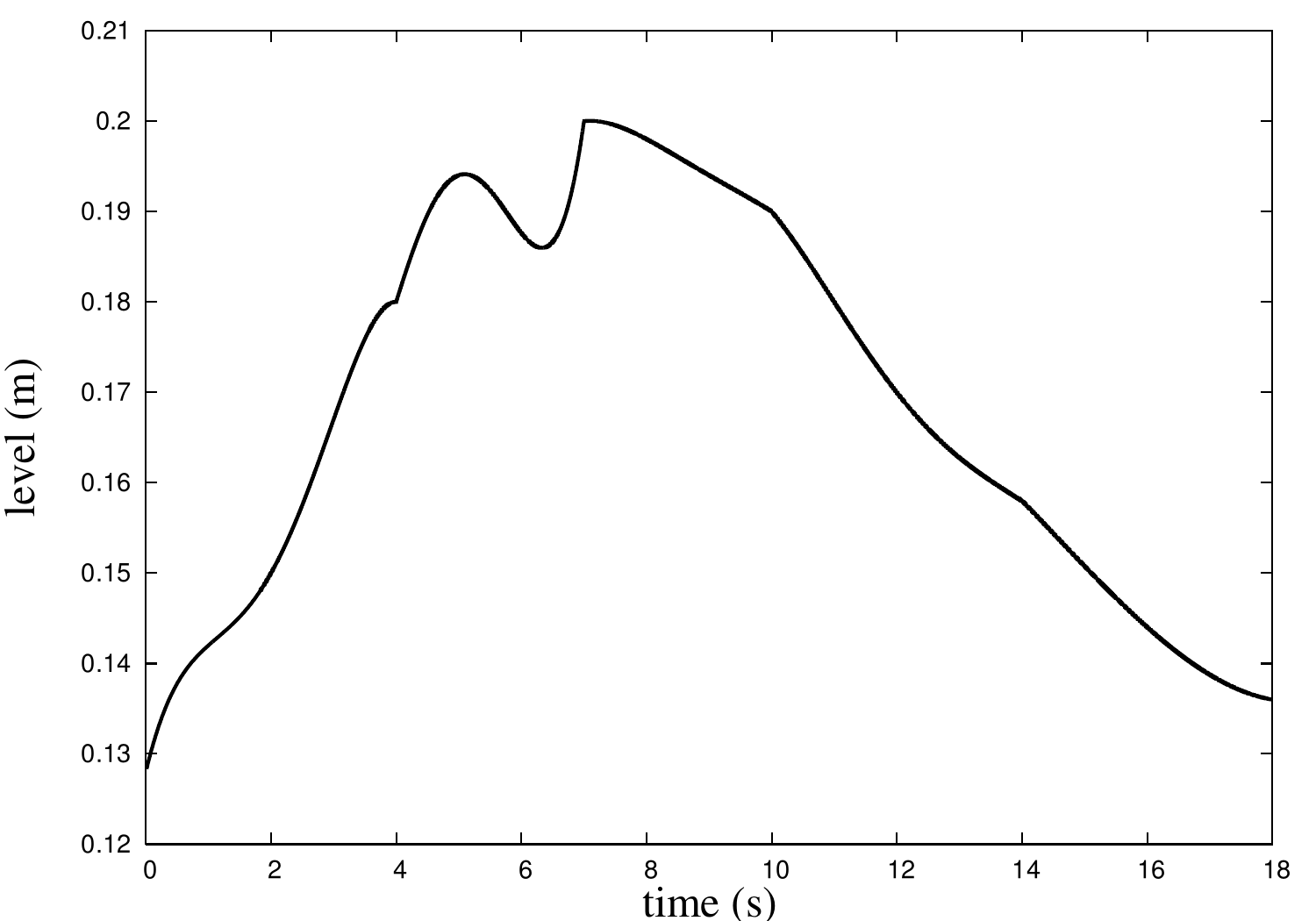}
\includegraphics[height=6cm,angle=-90]{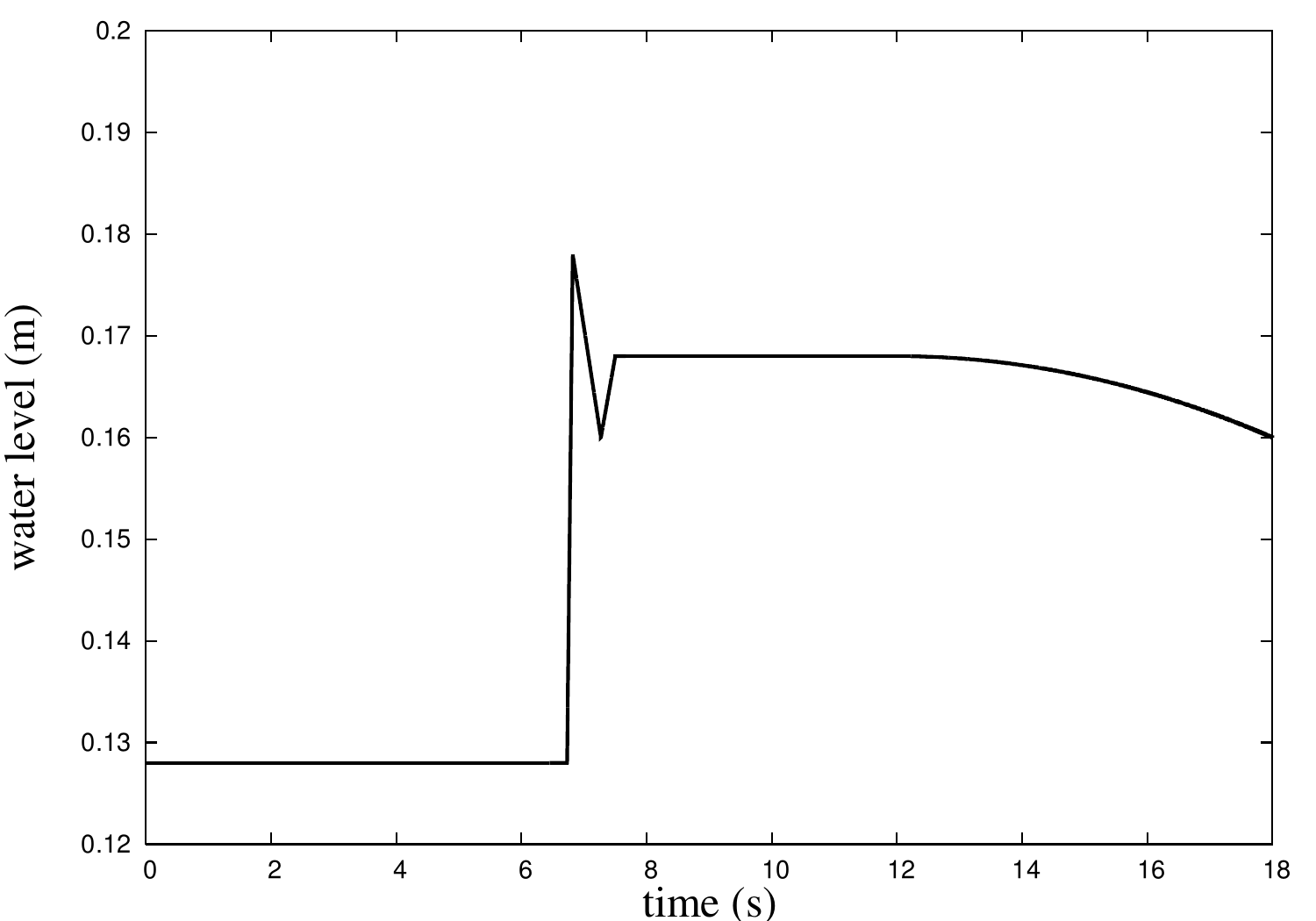}
\caption{Wiggert's test : upstream hydrograph (up) and downstream water level (down).\label{Wamav}}
\end{figure}
\begin{figure}[H]
\centering
\includegraphics[height=5cm]{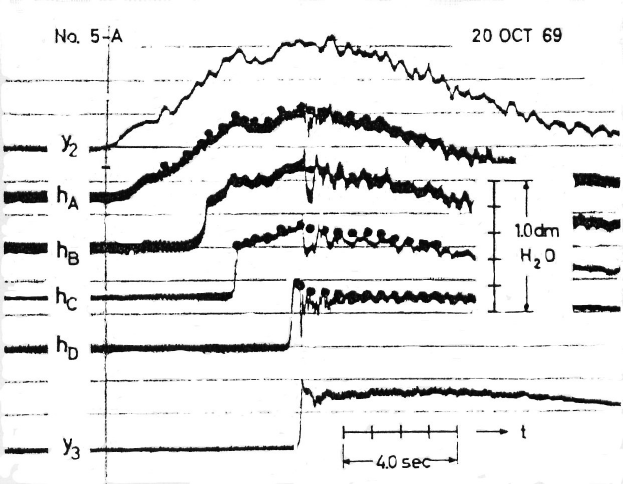}
\caption{Wiggert : experimental data. $y_2$ : upstream hydrograph, $y_3$ :  downstream hydrograph.
$h_A$, $h_B$, $h_C$, $h_D$ : pressure head at 0.5 $m$, 3.5 $m$, 5.5 $m$ and 9.5 $m$ from the tunnel entrance
(location of recording instruments) (\cite{W72}).\label{Wdat}}
\end{figure}

Let us define the piezometric head by:
$$\displaystyle  piezo = Z + \mathcal{H} + p \; \mbox{ with }
\left\{\begin{array}{l}
\displaystyle p = \frac{c^2 \, (\rho -\rho_0)}{\rho_0 \, g}
\mbox{ if the flow is pressurised},\\
p = h \mbox{ the water height if the flow is free surface}.
\end{array}
\right.
$$
In figure \ref{comp}, we present the piezometric line computed at 3.5 $m$ from the tunnel entrance (solid curve).
Circles represent experimental data read on curve $h_B$, including maxima and minima points of the oscillating parts.
We can observe a very good agreement with the experimental data even for the oscillations.
We point out that we did not find in other papers, by authors carrying out the same simulation, a convenient numerical
reproduction of these oscillations : they do not treat the dynamical aspect of the pressurized flow,
in particular when using the Preissmann slot technique (\cite{W72,GAP94}).
On the other hand, we found in M. Fuamba \cite{M02} a similar and interesting approach with a non conservative formulation and
another numerical method (characteristics).\\
The value of the sound speed $c$ was taken equal to 40 $m/s$, roughly according to the frequency of the oscillations observed
during the phase of total submersion of the tunnel. This low value can be explained by the structure of the tunnel and by
bubble flow (see \cite{HM82,WS93} for instance).\\
We  observe that the front reaches the control point at 3.6 $s$, in a good agreement with the experimental data
(less than 0.15 $s$ late). Let us mention that before it reaches the exit (part AB in figure \ref{comp}) the oscillations
of the pressure associated with the moving front reflect between upstream and the front itself
(since the free surface is at constant pressure) where the channel is flooded.
Beyond point B the oscillations result from the step in the downstream water level and they
propagate in the fully pressurized flow (their frequency was estimated using the BC part of the experimental curve).\\
Figure \ref{speed} gives the evolution of the front's speed. We observe the same behaviour as in \cite[Figure 7]{W72}:
the front quickly attains a maximum speed, decelerates and then slowly accelerates as it approaches the tunnel exit.
Moreover the values are consistent with those of Wiggert. 
\begin{figure}[H]
\begin{center}
\subfigure[Piezometric line at location $x=3.5 \,m$.
\label{comp}]
{
\includegraphics[height=7cm,angle=-90]{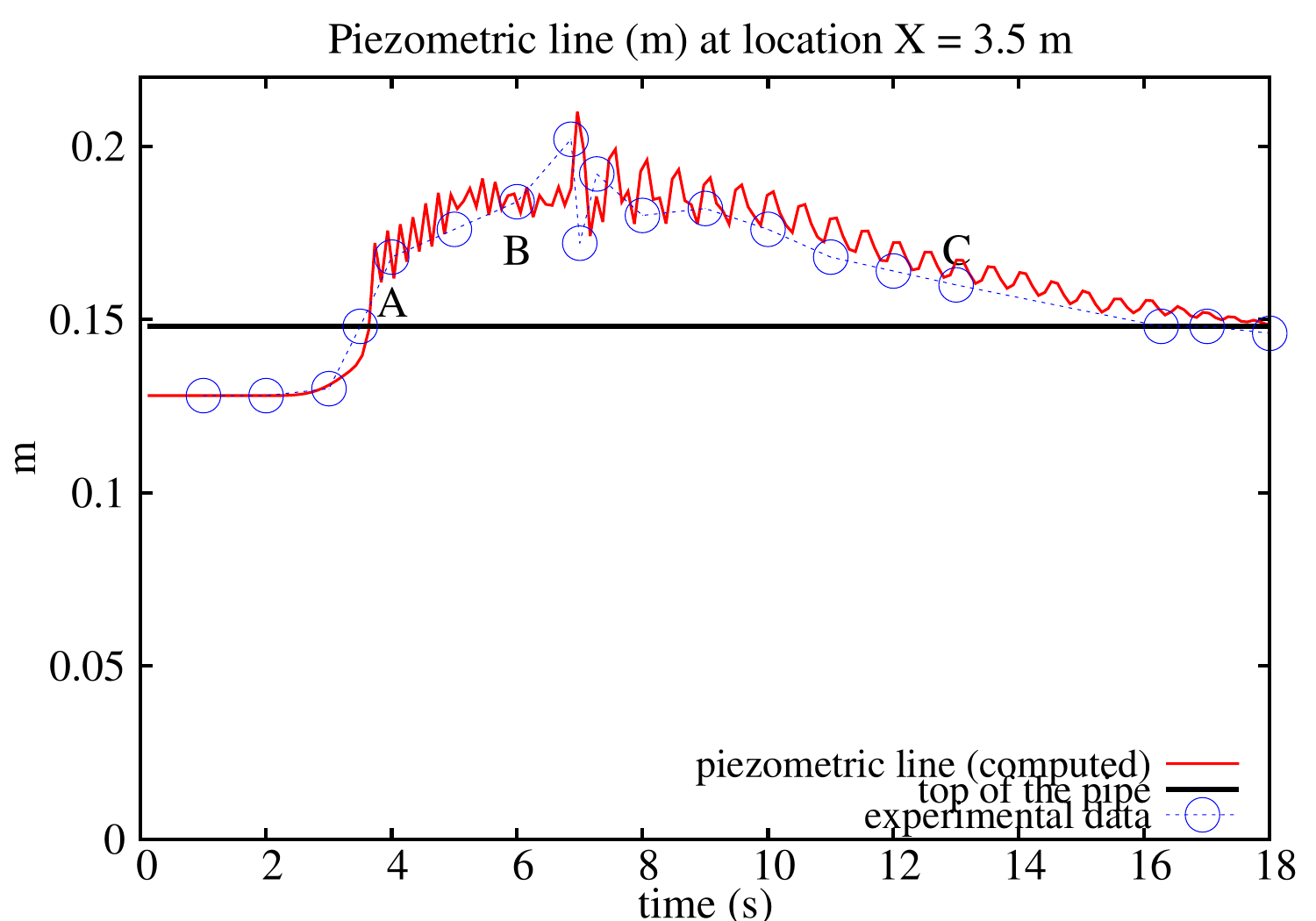}
}
\subfigure[Velocity of the transition point.\label{speed}]
{
\includegraphics[height=7cm,angle=-90]{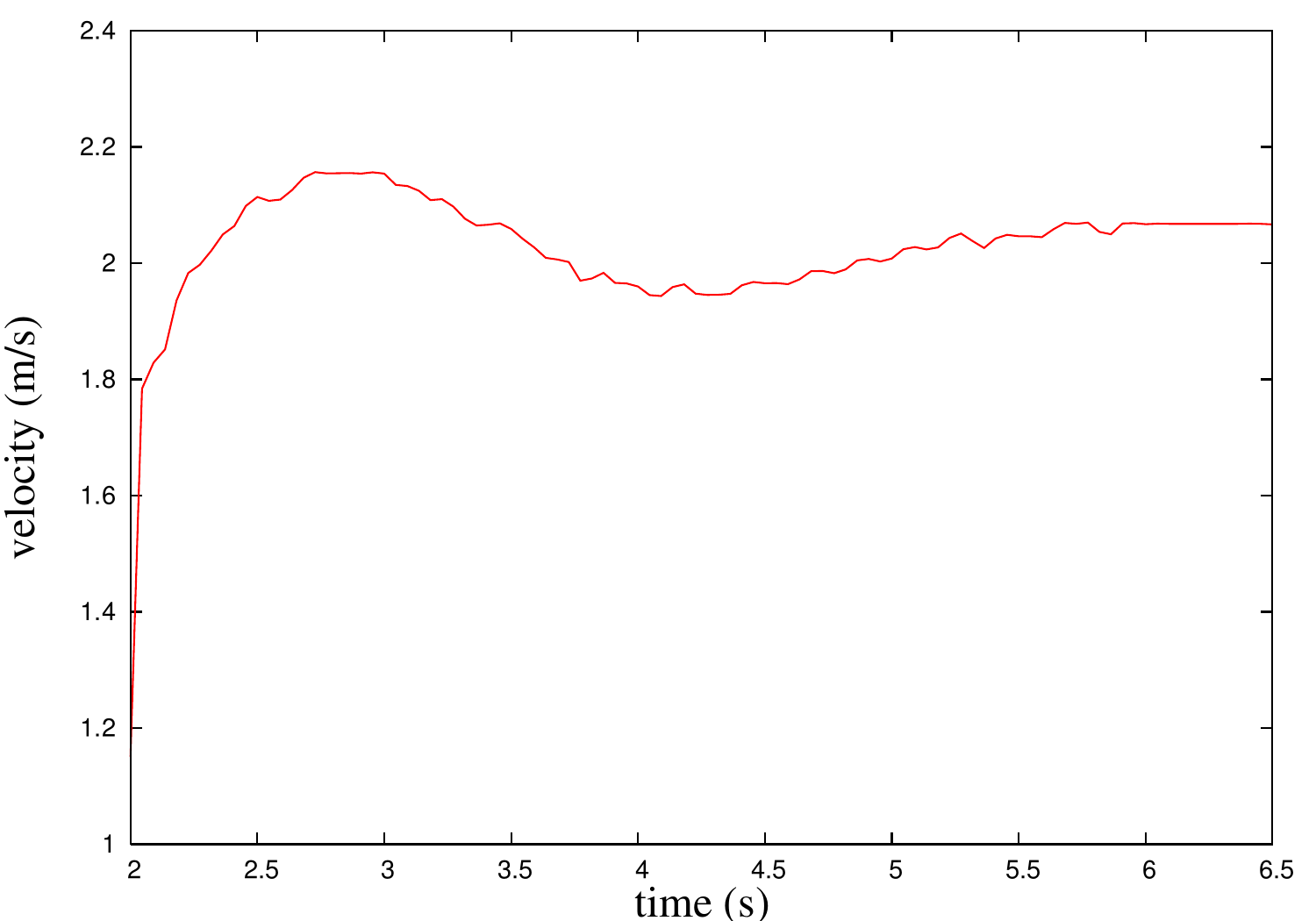}
}
\caption{Numerical results for  Wiggert's test.}
\end{center}
\end{figure}
\subsection{Numerical validation for a pressurized flow in an uniform pipe}\label{belier}

We present now numerical results of a water hammer test.
The pipe of circular cross-section of $2 \: m^2$ and thickness $20 \: cm$ is $2000\: m $ long. The altitude of the upstream end of the
pipe is $250 \: m$ and the angle is $5^{\circ}$. The Young modulus is $23 \, 10^9 \:Pa$ since the pipe
is supposed to be built in concrete: thus the sonic speed is equal to   $c=1414.2 \: m/s$. The total upstream head is $300 \:m$. \\
Other parameters are: 
$$
\begin{array}{lcl}
\hbox{Discretisation points}& : & 1000,\\ 
\hbox{Delta x }(m)& : &2,\\
\hbox{CFL }& : & 1,\\
\hbox{Simulation time } (s) & : &100,\\
\hbox{Sound speed } (ms^{-1})& : &1414.2.
\end{array}
$$

To ensure that the model and the kinetic numerical method that we propose describe precisely flows in closed uniform water pipes,
we present a validation of it by comparing numerical results of the proposed model with
the ones obtained by solving Allievi equations by the method of characteristics with the so-called \verb+belier+
code used by the engineers of Electricit\'e de France, Centre d'Ing\'enierie Hydraulique, Chamb\'ery, \cite{W93}.

A first simulation of the water hammer test is done for a fast cut-off of the downstream discharge for a pipe whose  
Strickler coeeficient is $K_{s} = 90$:
the initial downstream discharge is $10 \: m^3/s$ and we cut the flow in $5 \: s$. 
In figure \ref{belier1}, we present a comparison between the results
obtained by our kinetic scheme scheme and the ones obtained by the \verb+belier+ code at the middle of the pipe:
the behavior of the piezometric line  and the discharge at the middle of the pipe. One can observe that
the results for the proposed model and the numerical kinetic scheme are in very good agreement with the  solution of Allievi equations.

A second simulation of the water hammer test is done for the same  rapid cut-off of the downstream discharge but for a frictionless pipe
whose  Strickler coefficient is $K_{s} = 215,63 \: 10^{6}$.
In figure \ref{belier1bis}, we present a comparison between the results
obtained by our kinetic scheme scheme and the ones obtained by the \verb+belier+ code at the middle of the pipe:
the behavior of the piezometric line  and the discharge at the middle of the pipe. One can observe again that
the results for the proposed model and the numerical kinetic scheme are in very good agreement with the  solution of Allievi equations.
\begin{figure}[H]
\centering
\includegraphics[height=7cm,angle=-90]{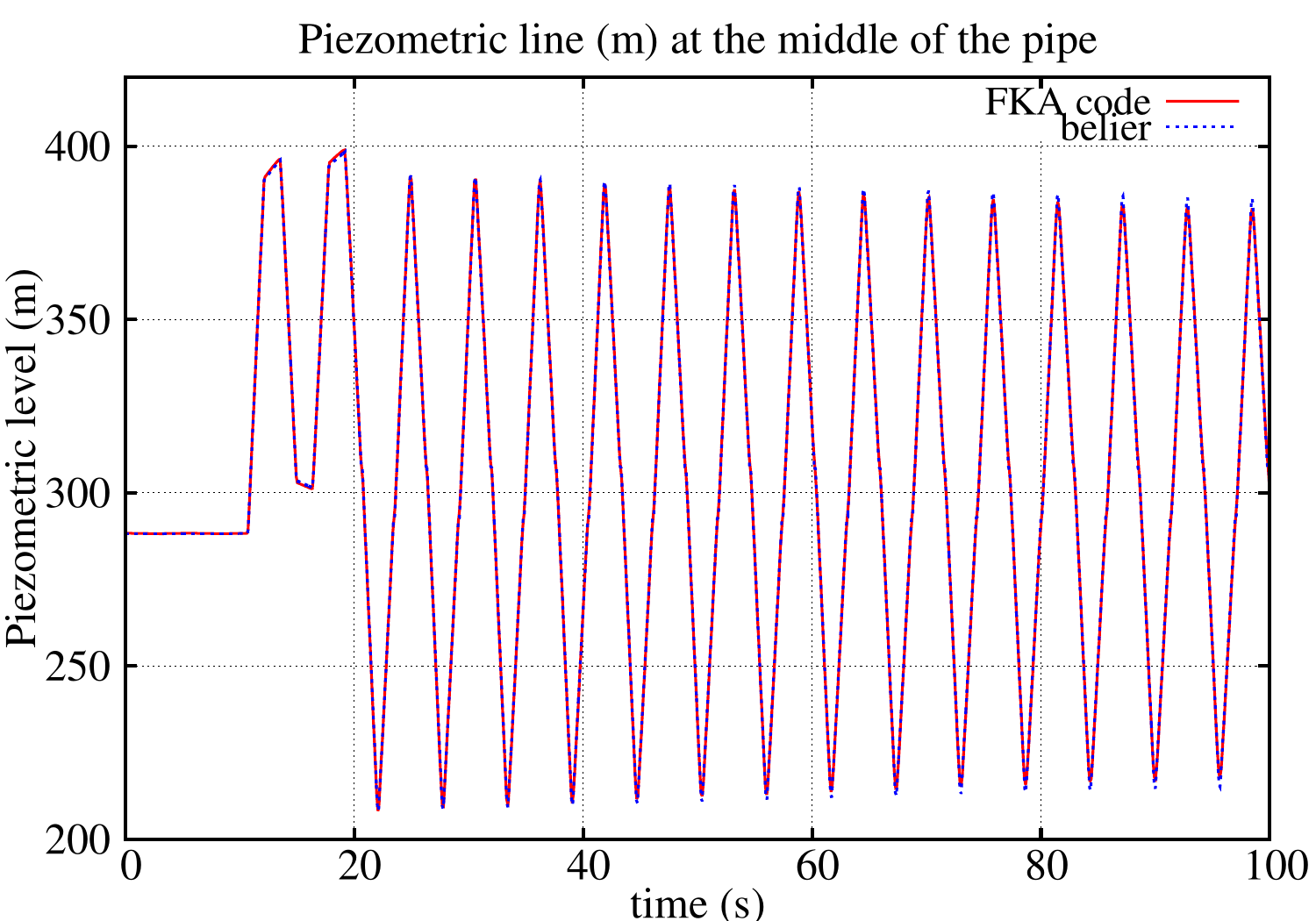}
\includegraphics[height=7cm,angle=-90]{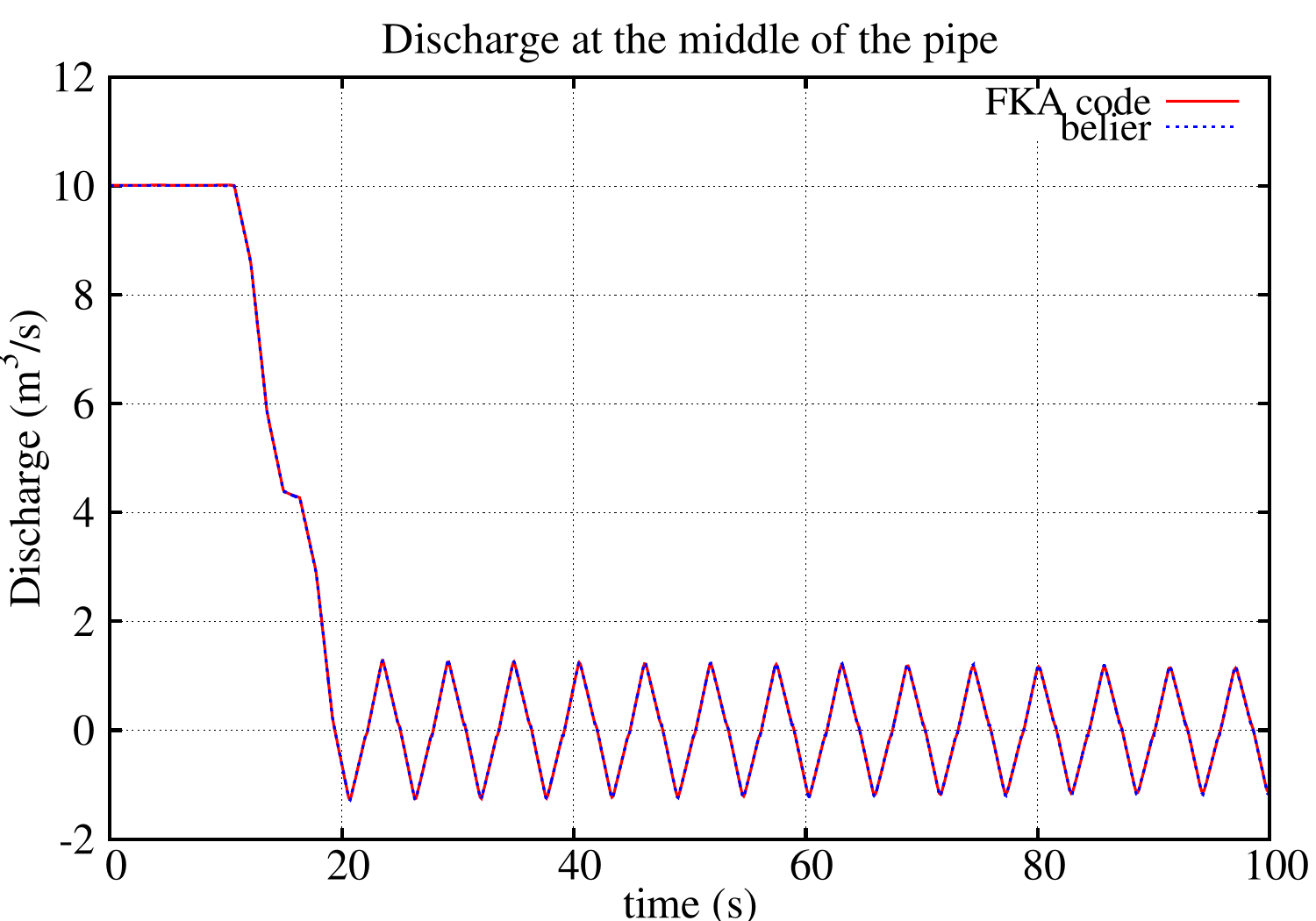}
\caption{Piezometric line (left) and discharge (right) at middle of the pipe\label{belier1}.}
\end{figure}
\begin{figure}[H]
\centering
\includegraphics[height=7cm,angle=-90]{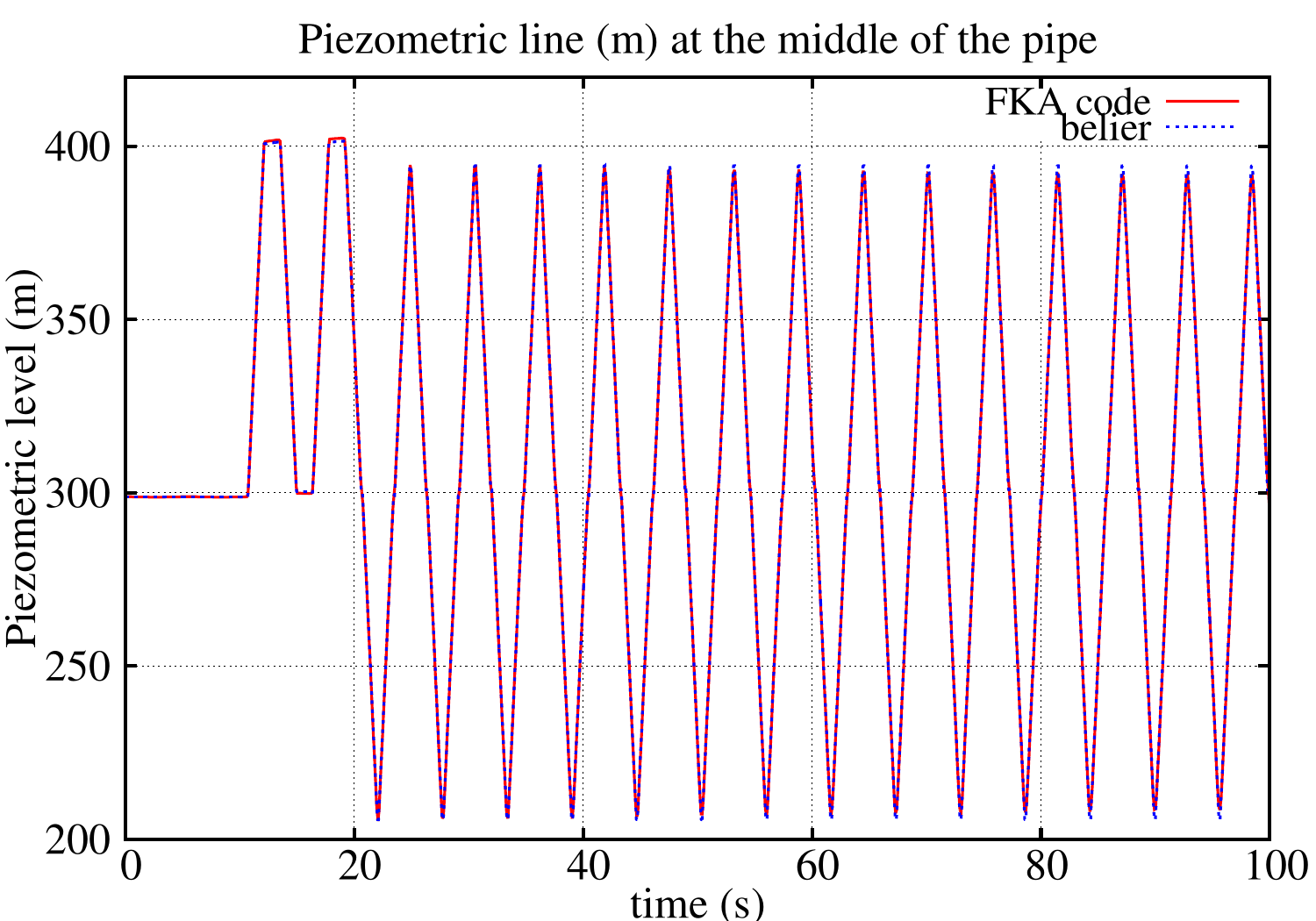}
\includegraphics[height=7cm,angle=-90]{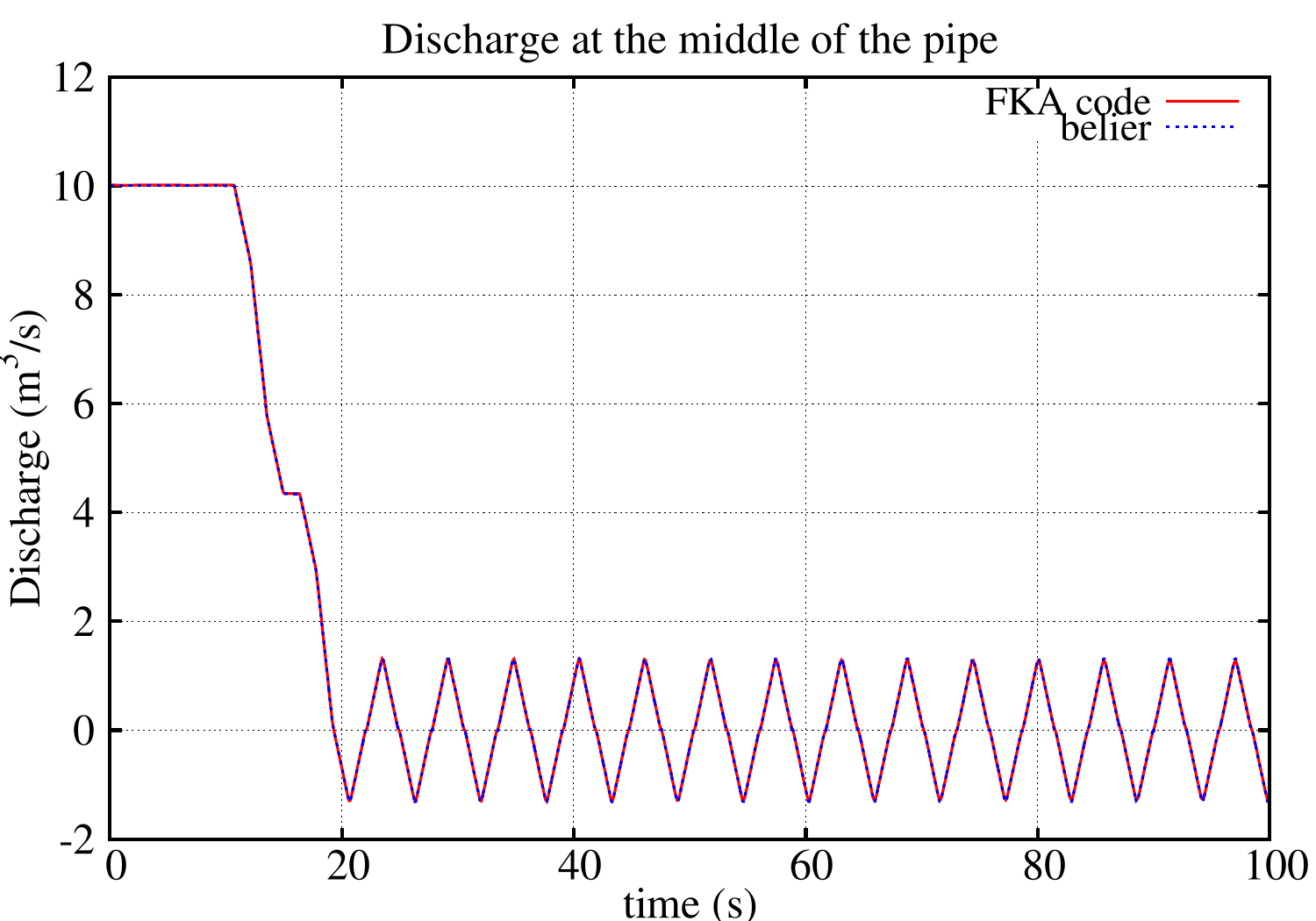}
\caption{Piezometric line (left) and discharge (right) at middle of the frictionless pipe\label{belier1bis}.}
\end{figure}
\subsection{Numerical computation of  steady states}\label{steady}
\subsubsection{Free surface steady state}
Our purpose in the following test cases is to study the convergence in time and space towards a 
free surface steady state in a varying rectangular channel.  
We first test the ability  of the presented scheme on two transcritical steady state test cases in a purely free surface flows  for which
analytic solutions are available.  For each test case, the numerical spatial order is computed. 

For all the free surface steady states, one has $\partial_t A=\partial_t u = \partial_t Q = 0 $. 
Thus the mass-conservation equation gives $Q = Q_{ex} = Q_{0}$ and we get from  Equation \eqref{ThmPFSEquationForU}: 
$$\partial_x Z = \partial_x\left( \mathcal{H}(A)\cos\theta + \frac{Q_0^2}{2 g A^2}\right) + K(x,A) \dsp\frac{Q_{0}|Q_{0}|}{A}.$$ 
Once the wet area $A$ and the discharge are given, one can compute the corresponding topography. Thus, for instance, following MacDonald \emph{et al. } \cite{Baines} in case of varying rectangular channel with: 
$$A(x) = B(x) h(x),\quad P(x,h) = B(x) + h\ ,$$
the bed level $Z$ is given by:
$$\partial_x Z = \left(\frac{Q_{0}^2}{g B(x)^2 h_{ex}(x)^3}-1\right) h'_{ex}(x)
-\frac{Q_{0}^2 n^2 (2 h_{ex}(x)+B(x))^{4/3}}{(B(x) h_{ex}(x))^{10/3}}+\frac{Q_{0}^2 B'(x)}{g B(x)^3 h_{ex}(x)^2}\ ,$$ 
where $h_{ex}$ is the given height, $B$ is the width of the channel, $P$ is the wet perimeter and $n$ is the Manning coefficient. 

In this configuration, we reproduce two transcritical test cases with an hydraulic jump: 
the first one is subcritical to supercritical and the second is  supercritical to subcritical. 
For each problem, we consider a channel of length $L=1000 \:m$ of varying width 
$$
B(x) = 10-64 \left(\left(x/L\right)^2-2\left(x/L\right)^3+\left(x/L\right)^4 \right).
$$ displayed on figure \ref{VR3B}
\begin{figure}[H]
\begin{center}
\includegraphics[height=5.5cm]{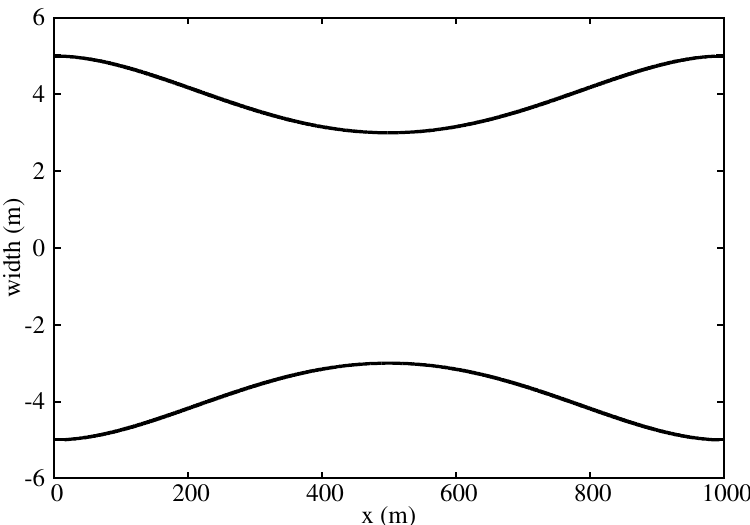}
\caption[Optional caption for list of figures]{Width profile.}
\label{VR3B}
\end{center}
\end{figure}
The discharge used  for steady state is $Q_{0}=20 \:m^3/s$ and $n= 0.02$. 
The analytical solution is given by 
\begin{equation}\label{hex}
h_{ex}(x)= \left\{
\begin{array}{lll}
-1/40+\frac{1}{1+2\left(\frac{2x-L}{2L}\right)^2} & \textrm{ if } & 0\leqslant x \leqslant 500,\\
h_r(x)  & \textrm{ if } & 500< x \leqslant 1000,\\
\end{array}
\right.
\end{equation}
where $h_r(x)= \displaystyle \sum_{i=1}^3 a_i \exp\left(-30i \frac{2x-L}{2L}\right)+a_4 \exp\left(\frac{4x-L}{4L}\right)$ is the solution on the right hand side of the hydraulic jump and the coefficients $a_i$ are given following the test problem.
\begin{rque}
As emphasized in \cite{Baines}, there is no analytical expression of $Z$. 
Therefore, we have constructed $Z$ using cubic spline interpolation.
\end{rque}

\paragraph{Subcritical to supercritical test case .\newline}
In this test problem, the analytical solution is subcritical at inflow and changes, via a hydraulic jump localized at $x=500 \:m$, to supercritical.  
The analytical solution is given by formula \eqref{hex} where :
$$a_1 = 0.769035 \,,\, a_2 = -0.755596 \,,\, a_3=0.106813 \mbox{ and }a_4=1.125000 \ .$$  
The analytical solution as well as the bed profil are shown in figure \ref{VR3Piezo}. The height at the upstream boundary is $0.641667 \:m$ and the height at downstream end is $1.125\:m$.

To compute the convergence of the numerical solution toward the steady state solution, the upstream total head is kept constant equal to $0.641667 \: m$ and the downstream water level to $1.125\:m$.
Starting from a still water steady state with $h_{x=0} = 0.641667 $, we compute the  numerical flow for all time from $t=0 \:s$ until the stationary state is reached. Then,  the $L^1$ norm of the difference between $(h,Q)$ computed by the numerical kinetic scheme, for different mesh sizes of the uniform discretisation $\Delta x$, and the analytic solution $(h_{ex}, Q_{ex})$ at final time. 

Other parameters are: 
$$
\begin{array}{lcl}
\hbox{CFL }& : & 0.95,\\
\hbox{Simulation time } (s) & : &5000.\\
\Delta x = L/N& : & N=\left(100+m 100\right)_{m = 0,19} \hbox{ and } 20000.
\end{array}
$$
We present, in figure \ref{VR3}, the piezometric line (see figure \ref{VR3Piezo}) and the discharge (see figure \ref{VR3Discharge}) of the flow along the pipe when the steady state is reached for four different mesh sizes $\Delta x = 10$, $\Delta x = 5$, $\Delta x = 1$ and $\Delta x = 0.05$. 
In figure \ref{VR3Piezo} and \ref{VR3PiezoZoom}, the four curves representing the piezometric line, compared to the analytical one,   
are  close to the analytical solution  and the hydraulic jump location are very well captured even if, for large $\Delta x$,
the numerical solution is smooth around this point. 
For the discharge, we observe that the convergence toward $Q_{0} = 20 \:m^3/s$ is also close, since for $\Delta x = 100$, 
the error is of order 0.1 and decreases as $\Delta x$.  
Indeed, in figure \ref{VR3Norm}, we have computed the $L^1$ norm. The obtained numerical order is almost  equal to 1 in $h$ as well as for $Q$.
\begin{figure}[H]
\begin{center}
\subfigure[Piezometric level.\label{VR3Piezo}]
{
\includegraphics[height=5.5cm]{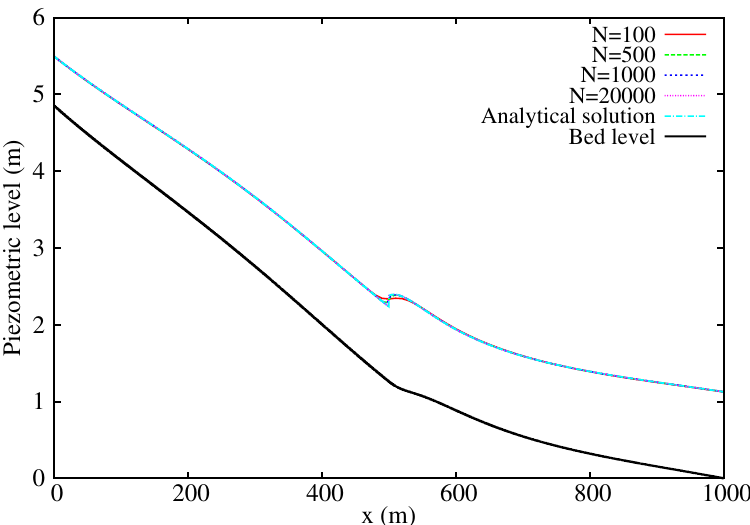}
}
\subfigure[Around the hydraulic jump.\label{VR3PiezoZoom}]
{
\includegraphics[height=5.5cm]{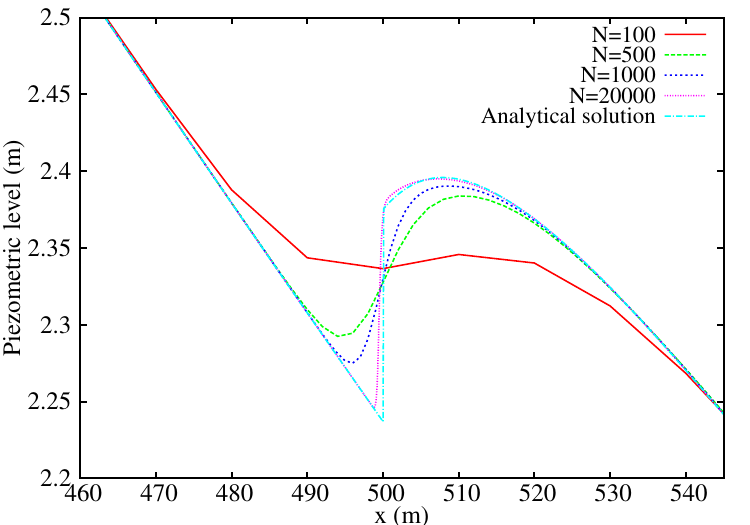}
}
\subfigure[Discharge.\label{VR3Discharge}]
{
\includegraphics[height=5.5cm]{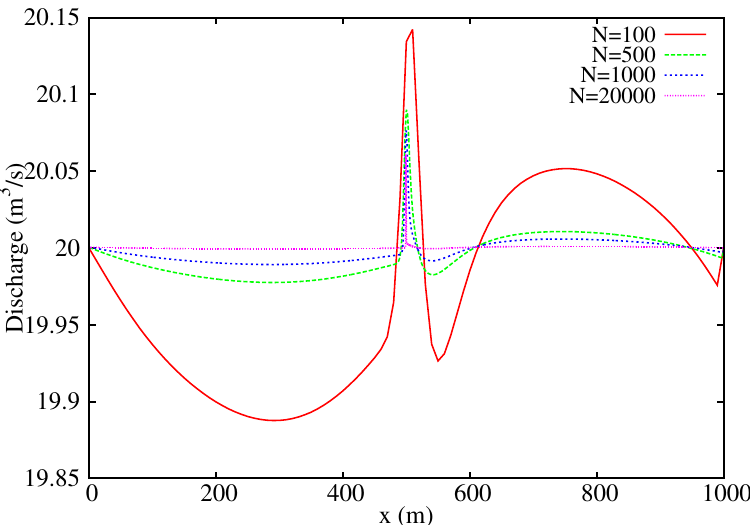}
}
\subfigure[$L^1$ discretisation error.\label{VR3Norm}]
{
\includegraphics[height=5.5cm]{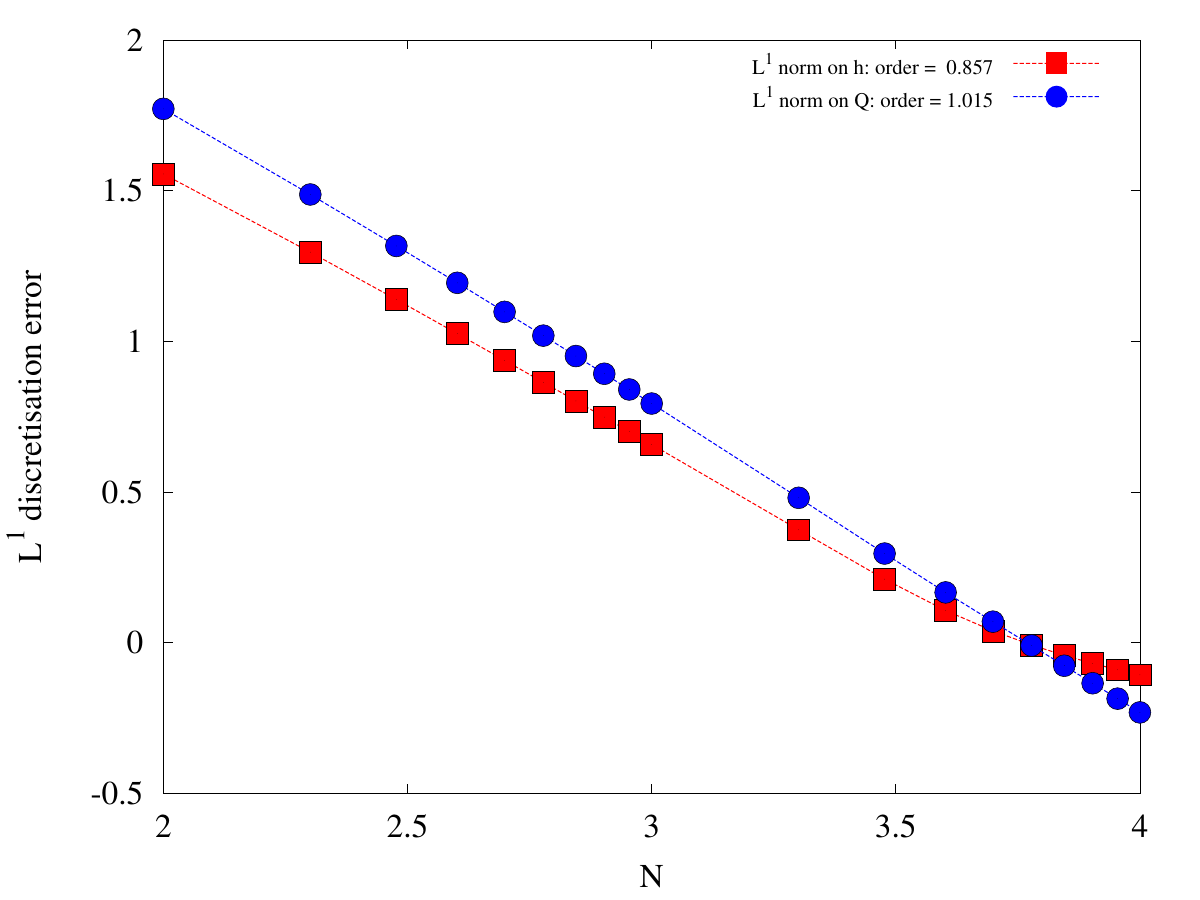}
}
\caption[Optional caption for list of figures]{Subcritical to supercritical test case.}
\label{VR3}
\end{center}
\end{figure}

\paragraph{Supercritical to subcritical test case.\newline}
In this test problem, the analytical solution is supercritical at inflow and changes, via a hydraulic jump localized at $x=500 \:m$, to subcritical.  
The analytical solution here is given by formula \eqref{hex} where 
$$a_1=-0.230680 \,,\, a_2=0.248267 \,,\, a_3=-0.228271 \mbox{ and } a_4=1.500000 \ .$$
The analytical solution as well as the bed profil are shown in figure \ref{VR4Piezo}. The height at the upstream boundary is $0.641667 \:m$ and the height at downstream end is $1.5\:m$. 

We proceed as done before to compute the convergence of  the numerical solution  toward the steady state.  Results on the piezometric line, the discharge and the numerical order are displayed on figure \ref{VR4}. We have used the same parameters as in the previous section. 
As one can observe, the same conclusion holds.
\begin{figure}[H]
\begin{center}
\subfigure[Piezometric level.\label{VR4Piezo}]
{
\includegraphics[height=5.5cm]{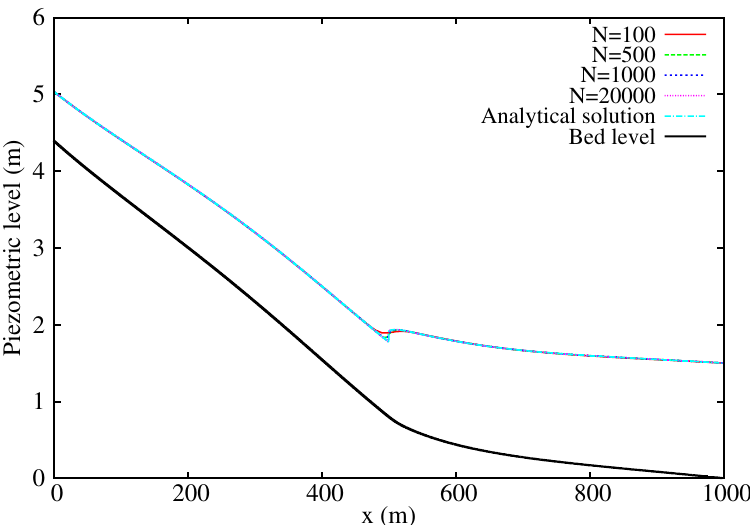}
}
\subfigure[Around the hydraulic jump.\label{VR4PiezoZoom}]
{
\includegraphics[height=5.5cm]{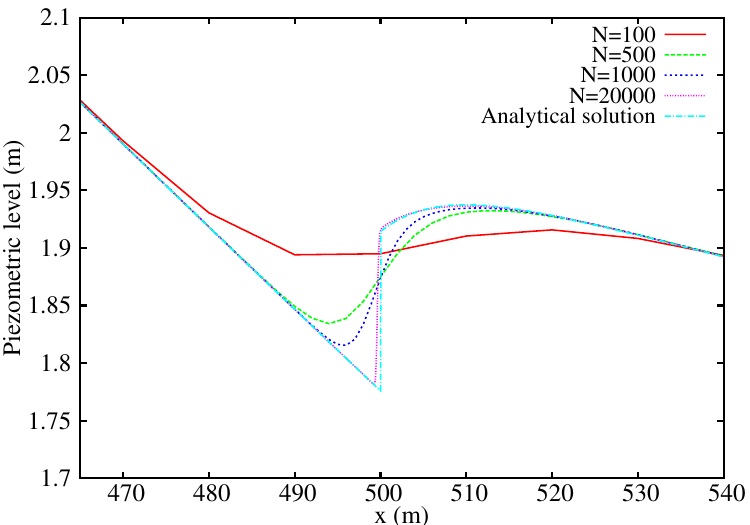}
}
\subfigure[Discharge.\label{VR4Discharge}]
{
\includegraphics[height=5.5cm]{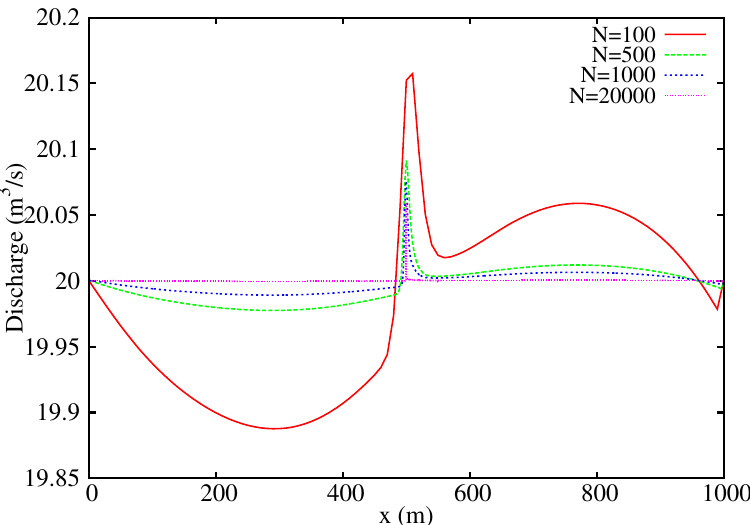}
}
\subfigure[$L^1$ discretisation error.\label{VR4Norm}]
{
\includegraphics[height=5.5cm]{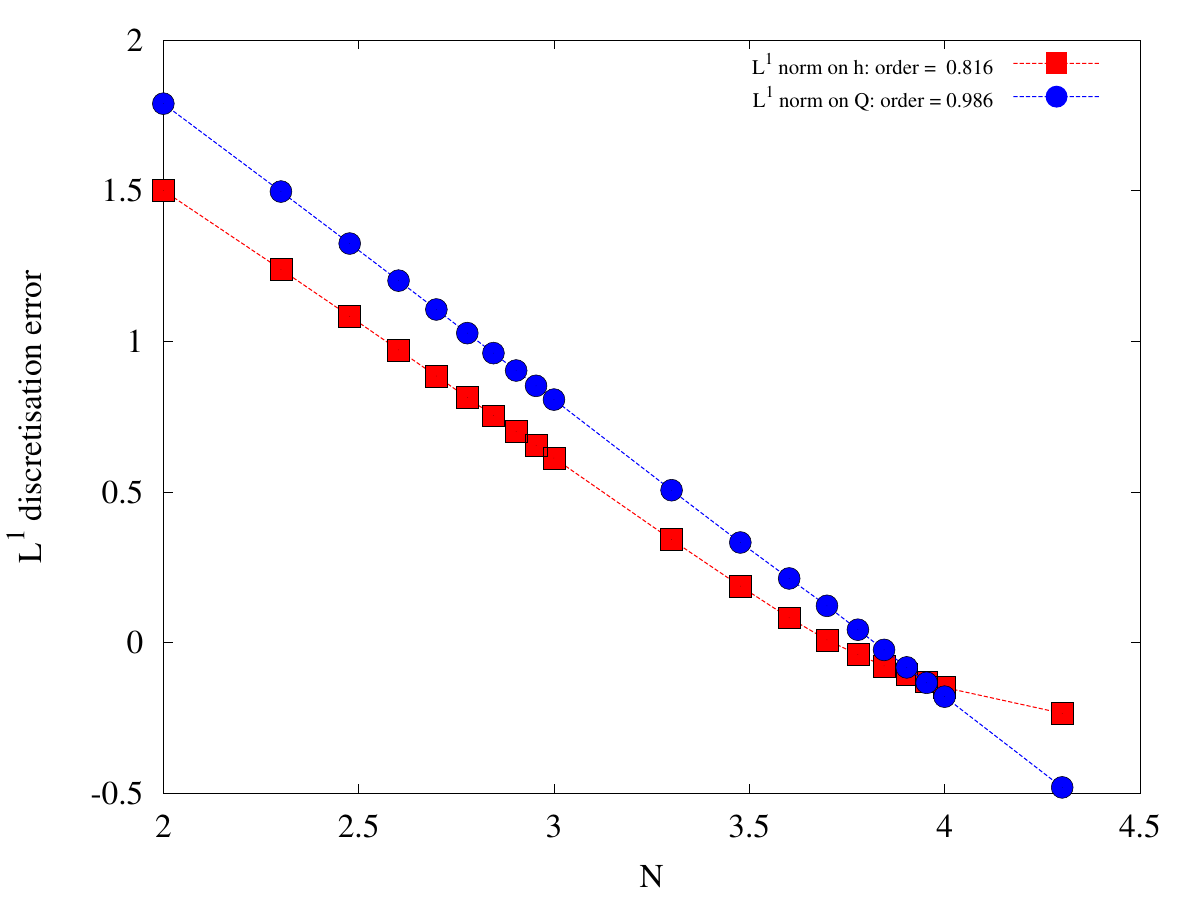}
}
\caption[Optional caption for list of figures]{Supercrtitical to subcritical test case.}
\label{VR4}
\end{center}
\end{figure}

\subsubsection{Mixed steady state}
In order to obtain a qualitative behavior of the scheme and to compute a ``numerical'' order of the discretisation error of the kinetic numerical scheme,
we present now a numerical experiment where the steady state is mixed. 
The pipe is a circular pipe of diameter $3 \:m$ and  $100 \: m$ long with slope 0.001 and Strickler coefficient is $K_{s} = 63.7$.
The altitude of the upstream end of the pipe is $100 \: m$.
The upstream total head is kept constant equal to $104 \: m$ whereas the downstream water level varies (see figure \ref{ordreaval}). 
We have compute the ``exact'' numerical flow for all time from $t=0 \:s$ until the stationary state is reached at time $t= 100 \:s$, 
by the VFRoe method presented in \cite{BEG09} with a uniform  discretisation  of 8000  mesh points.
We have then computed the $L^1$ norm of the difference between the piezometric line computed by the numerical kinetic scheme for different mesh 
sizes of the uniform discretisation, $\Delta x$, and the ``exact'' numerical solution at time $t = 20  \:s$ and $t= 100 \:s$. 

Other parameters are: 
$$
\begin{array}{lcl}
\hbox{CFL }& : & 0.9,\\
\hbox{Simulation time } (s) & : &100,\\
\hbox{Sound speed } (ms^{-1})& : &40.
\end{array}
$$
We present, in figure \ref{t1}, the piezometric line and the speed of the flow along the pipe at time $t = 20  \:s$ for three different mesh sizes
(in fact we prefer to talk of the number of mesh points). The three curves representing the piezometric line are very close 
{\it whereas} the coarse mesh  does not capture at all  the speed along the pipe.

We present, in figure \ref{t2}, the piezometric line and the speed of the flow along the pipe at time $t = 100  \:s$. 
The three curves representing the piezometric line {\it as well as}  the speed  along the pipe are very close. 
One can see that the stationary speed is not constant along the pipe.

The numerical order at time $t = 20  \:s$, represented in figure \ref{ordret1} for different mesh sizes, 
and the numerical order at time $t = 100  \:s$, represented in figure \ref{ordret2}, are almost equal to 1,
which was expected since a kinetic finite volume scheme is known to be of order 1.
\begin{figure}[H]
\centering
\includegraphics[height = 8cm]{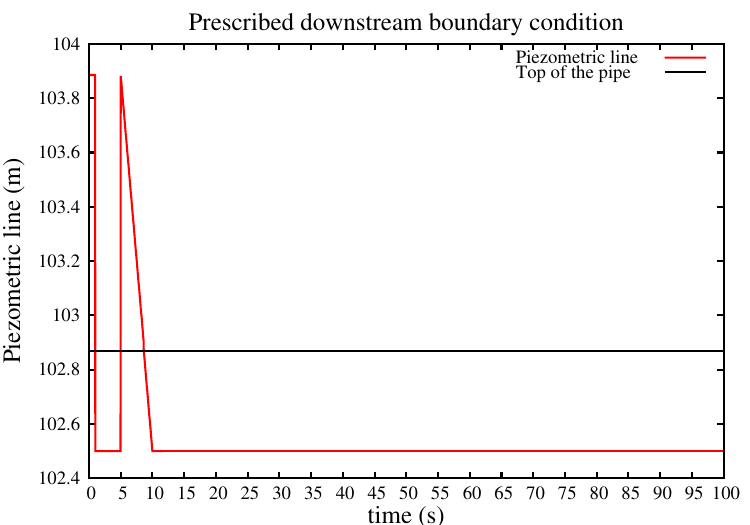}
\caption{Piezometric line  at the downstream end of the pipe.}\label{ordreaval}
\end{figure}

\begin{figure}[H]
\begin{center}
\subfigure[Piezometric line.]
{
\includegraphics[height=5.5cm]{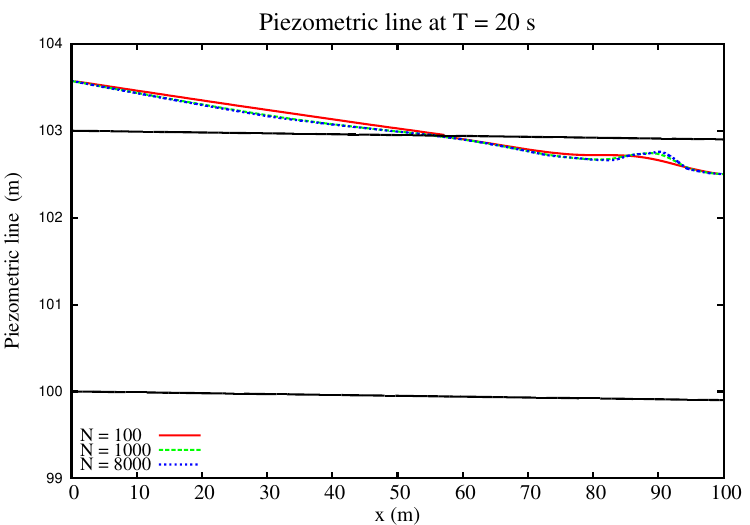}
}
\subfigure[Speed along the pipe.]
{
\includegraphics[height=5.5cm]{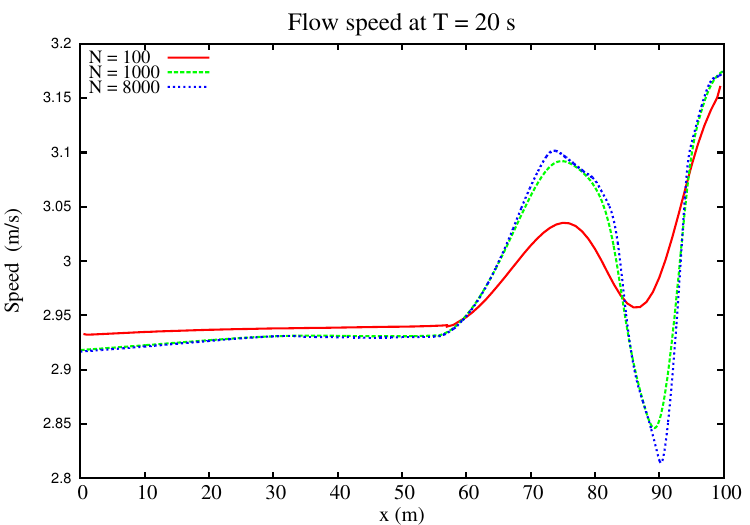}
}
\caption[Optional caption for list of figures]{Piezometric line and speed along the pipe at time $t = 20  \:s$. }
\label{t1}
\end{center}
\end{figure}

\begin{figure}[H]
\begin{center}
\subfigure[ Piezometric line.]
{
\includegraphics[height=5.5cm]{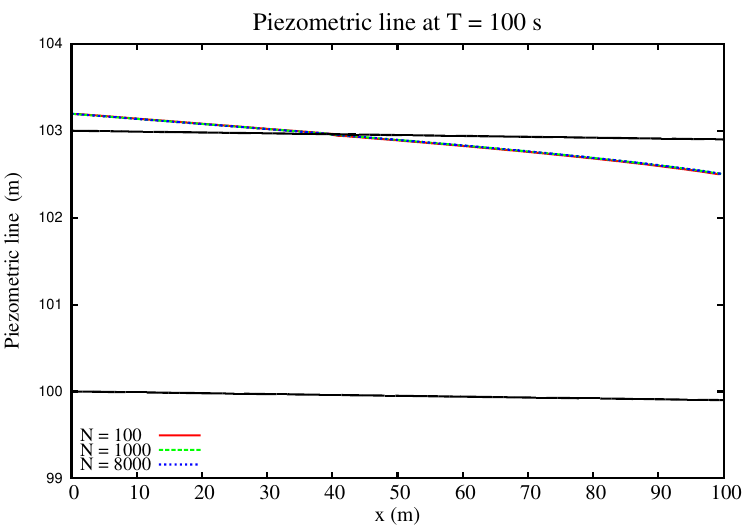}
}
\subfigure[Speed along the pipe.]
{
\includegraphics[height=5.5cm]{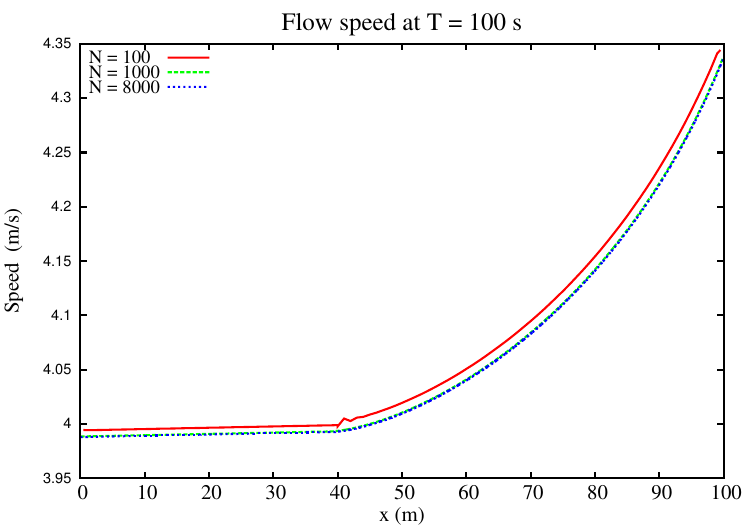}
}
\caption[Optional caption for list of figures]{Piezometric line and speed along the pipe at time $t = 100  \:s$. }
\label{t2}
\end{center}
\end{figure}

\begin{figure}[H]
\begin{center}
\subfigure[  $t = 20  \:  s$.\label{ordret1}]
{
\includegraphics[height=6cm]{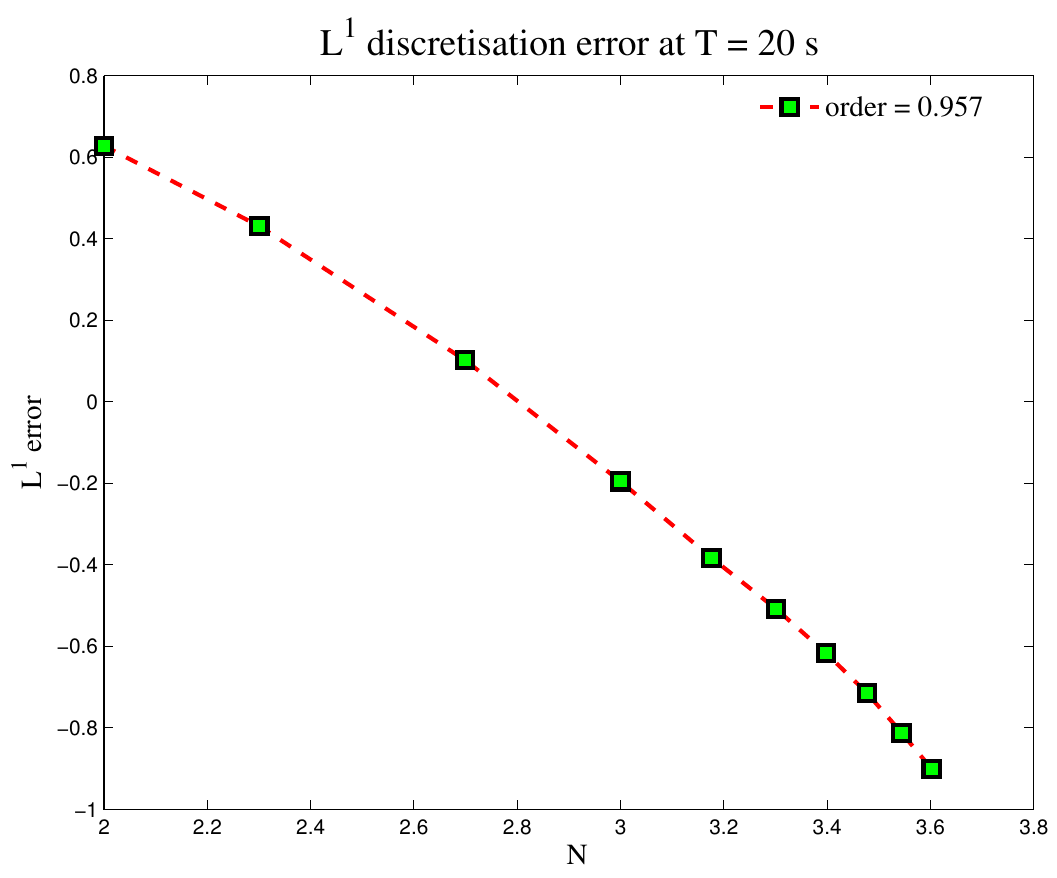}
}
\subfigure[ $t = 100 \: s$.\label{ordret2}]
{
\includegraphics[height=6cm]{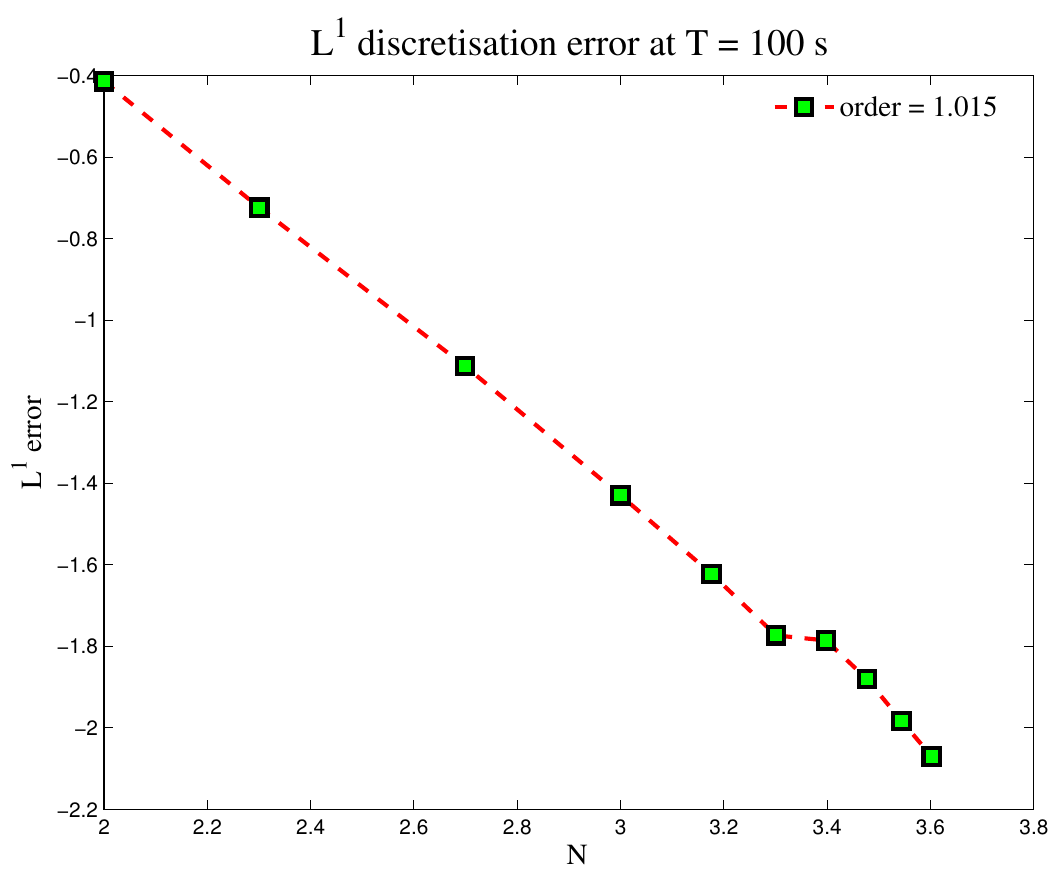}
}
\caption[Optional caption for list of figures]{$L^{1}$ discretisation error versus $N$ in $10$-logarithmic scale at time 
$t=20  \:s$ and time $t=100  \:s$. }
\label{Testordre}
\end{center}
\end{figure}

Finally, although we do not know if the two numerical schemes that we proposed satisfy the conservative in cell entropy (see Equation \eqref{ThmPFSEntropy}),
every numerical results presented have a very good qualitative behavior.
\subsection{Numerical validation for drying and flooding flow}\label{dry}
We present now numerical results for a flow that will be drying and flooding.
The frictionless pipe is constituted by a pipe of circular cross-section of diameter $2 \:m$ and  $50 \: m$ long with slope 0.003 and another pipe
of circular cross-section of diameter $2 \:m$  and  $100 \: m$ long with slope 0.05 . The altitude of the upstream end of the
pipe is $100\:  m$.
The upstream and downstream discharge is kept to 0. \\
Other parameters are: 
$$
\begin{array}{lcl}
\hbox{First pipe discretisation points}& : & 100,\\ 
\hbox{Delta x }(m)& : &0.5,\\
\hbox{Second pipe discretisation points}& : & 200,\\ 
\hbox{Delta x }(m)& : &0.5,\\
\hbox{CFL }& : & 0.9,\\
\hbox{Simulation time } (s) & : &500,\\
\hbox{Sound speed } (ms^{-1})& : &10.
\end{array}
$$
The initial state is a flow of constant height ($1.8 \: m$) on half the first pipe and a dry zone on the rest of the pipe, see figure \ref{flaque0}.
We present the flow at time $T= 6 \: s$, see figure \ref{flaque1}, where a drying zone is present, 
at time $T= 80 \: s$, see figure \ref{flaque2}, when the flow has reached the downstream end and is
partially pressurized and the flow at the final time $T = 500 \:  s$, see figure \ref{flaque3} where all the water is in the second pipe.
This non physical test shows that the kinetic numerical scheme treats ``naturally'' the flooding zone and almost the drying zone (up the
rounding error of the computer). The water height is exactly equal to $0$,
in the initial condition and at the final time for the dry zones.
\begin{figure}[H]
\centering
\includegraphics[height=7cm,angle=-90]{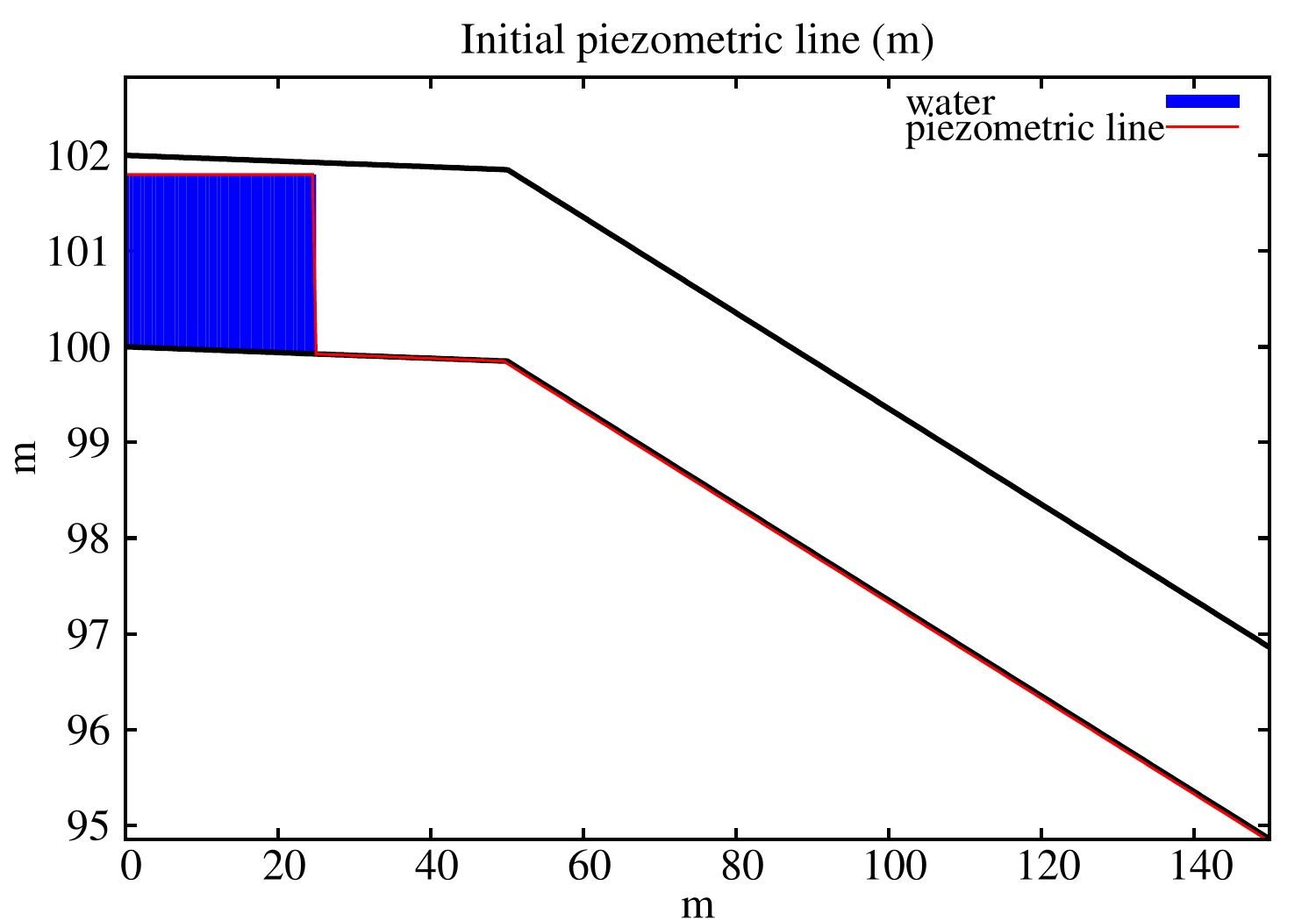}
\includegraphics[height=7cm,angle=-90]{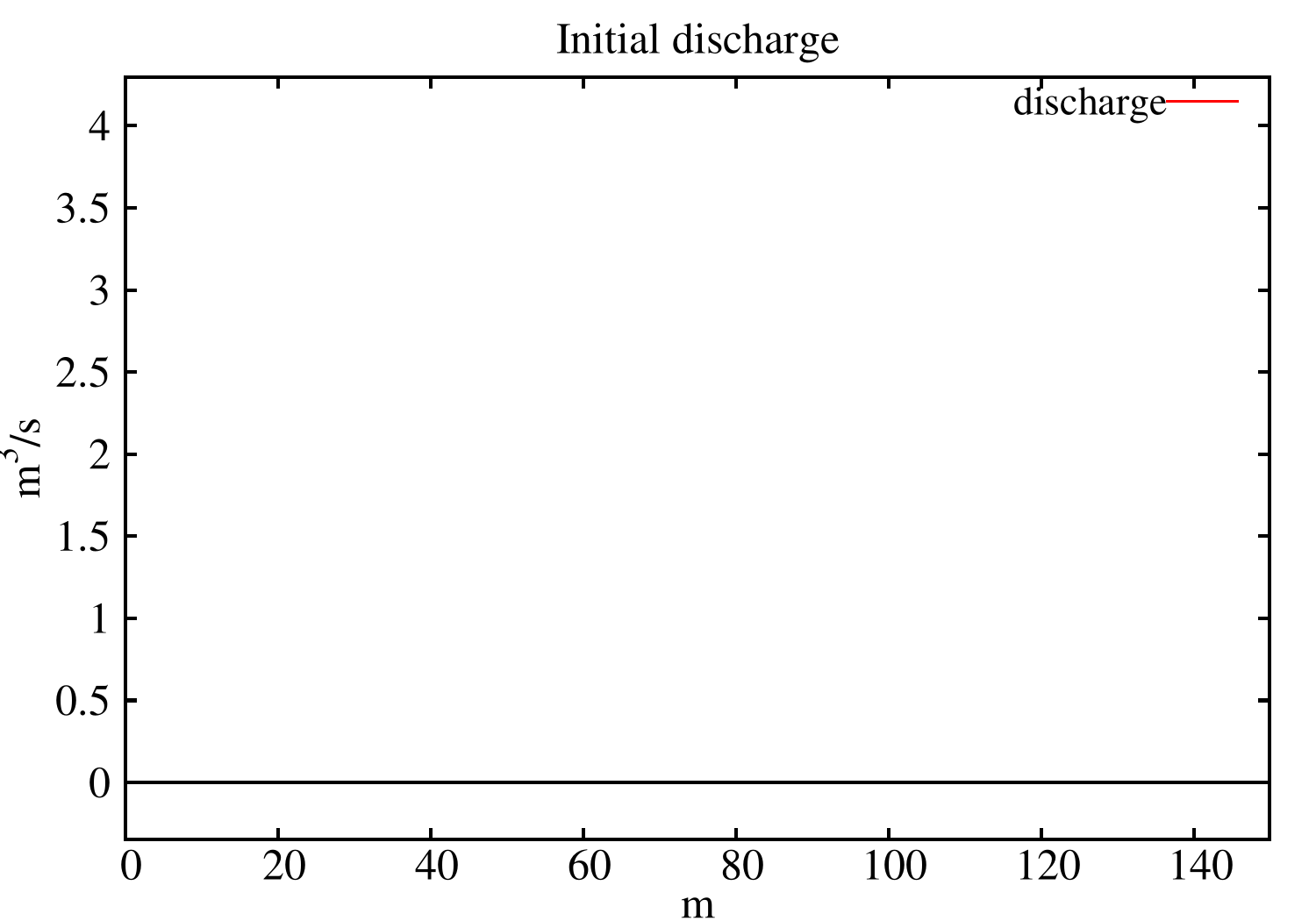}
\caption{Piezometric line (left) and discharge (right) at initial condition\label{flaque0}.}
\end{figure}

\begin{figure}[H]
\centering
\includegraphics[height=7cm,angle=-90]{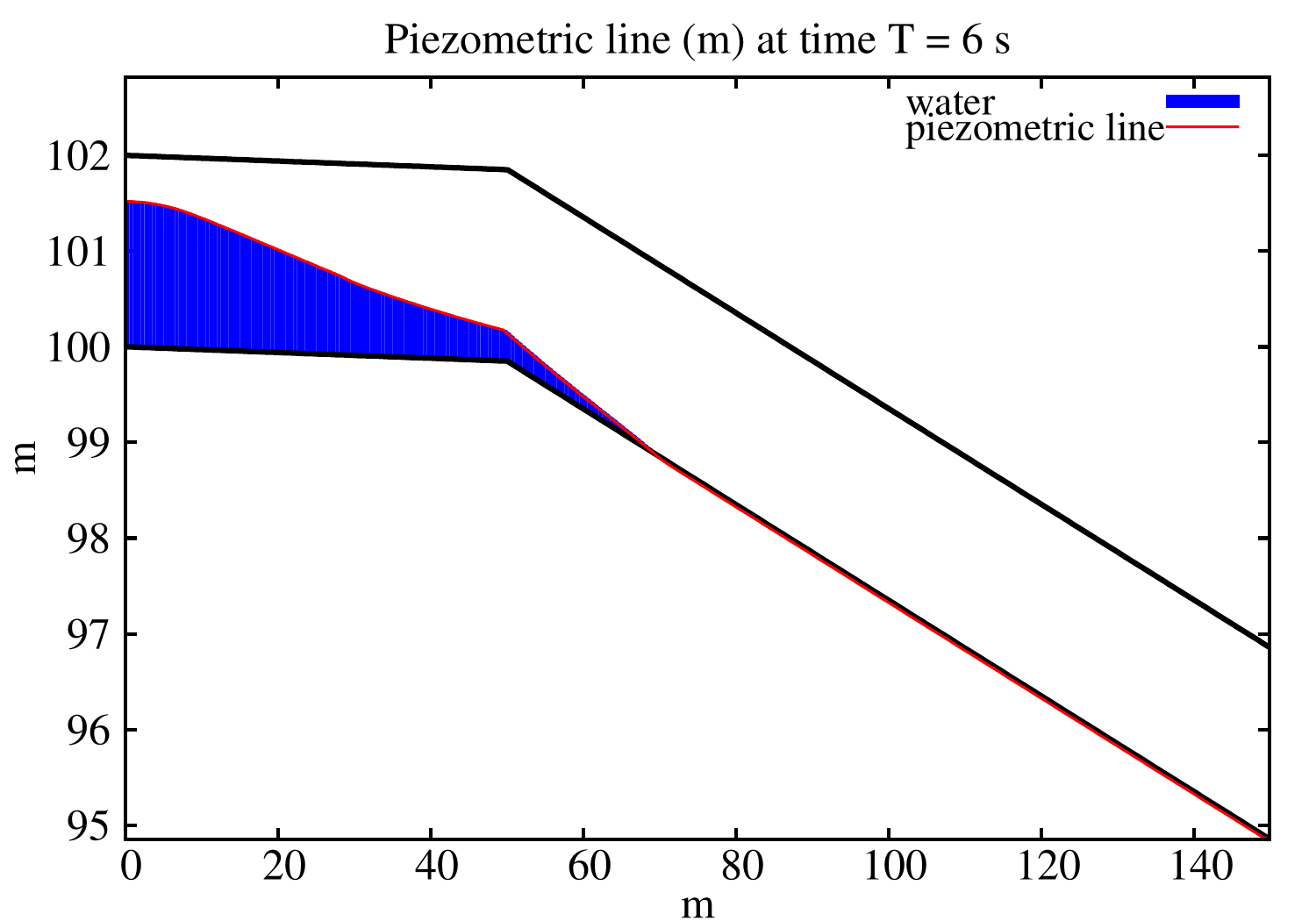}
\includegraphics[height=7cm,angle=-90]{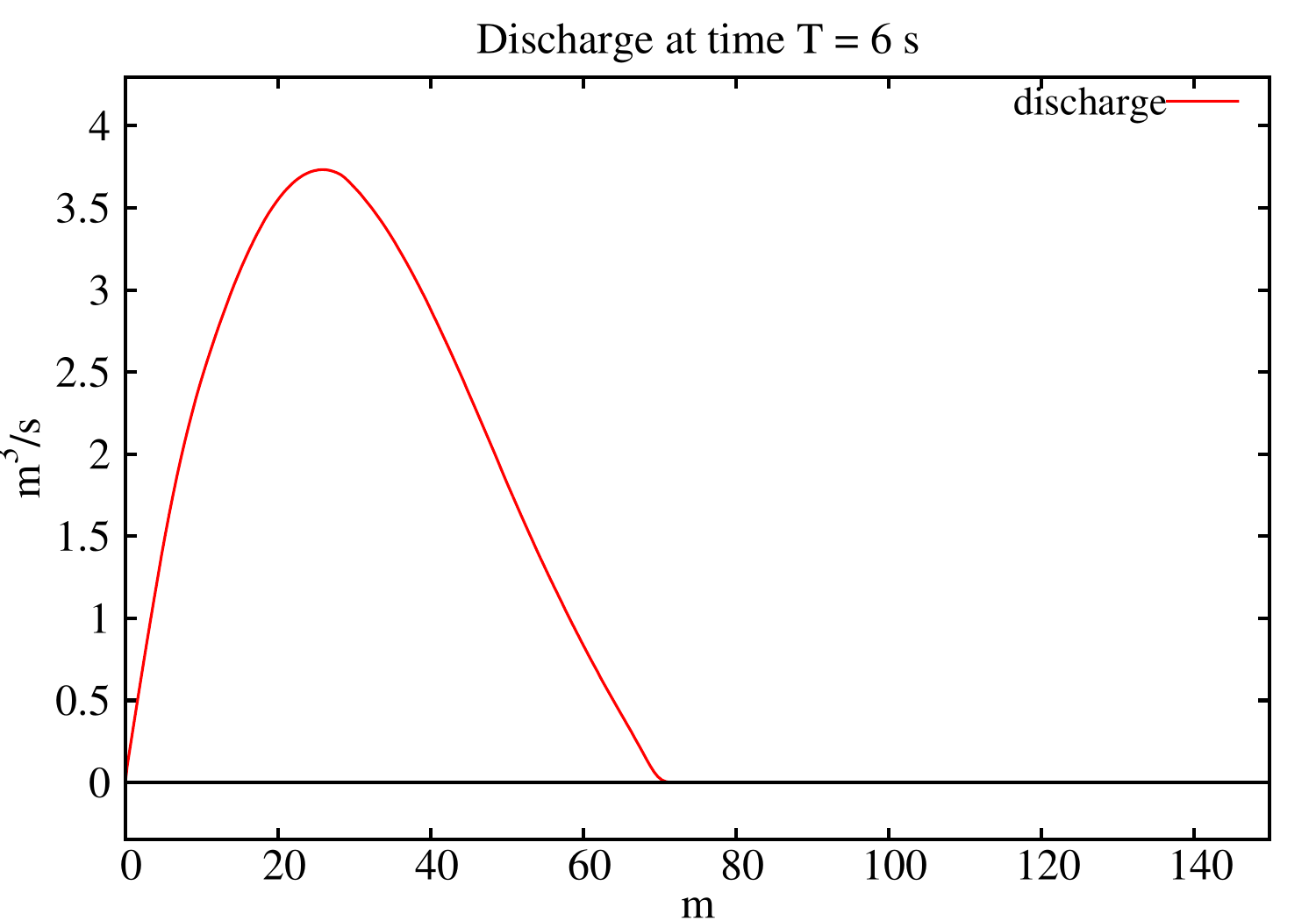}
\caption{Piezometric line (left) and discharge (right) at time $T = 6 \: s$\label{flaque1}.}
\end{figure}

\begin{figure}[H]
\centering
\includegraphics[height=7cm,angle=-90]{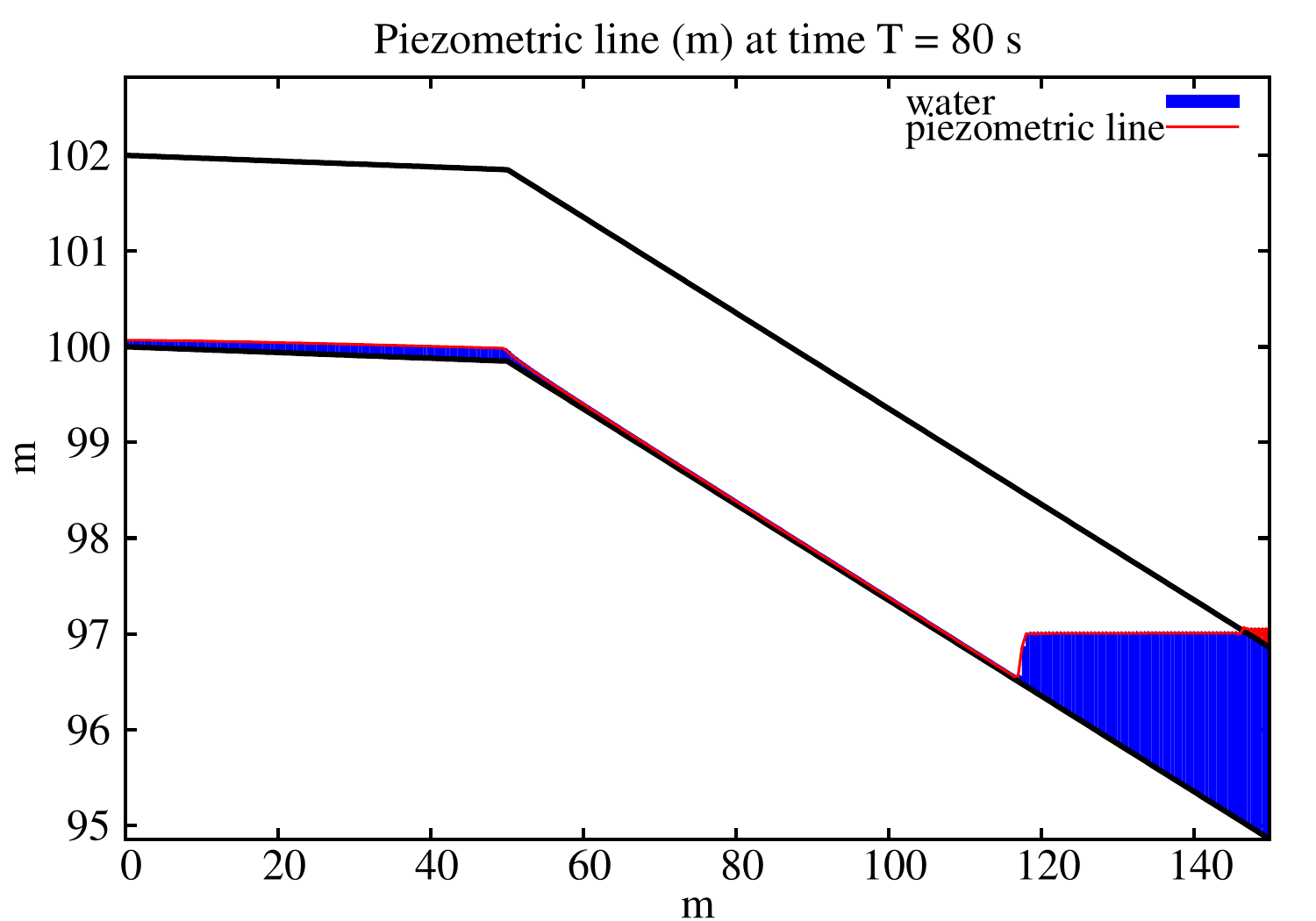}
\includegraphics[height=7cm,angle=-90]{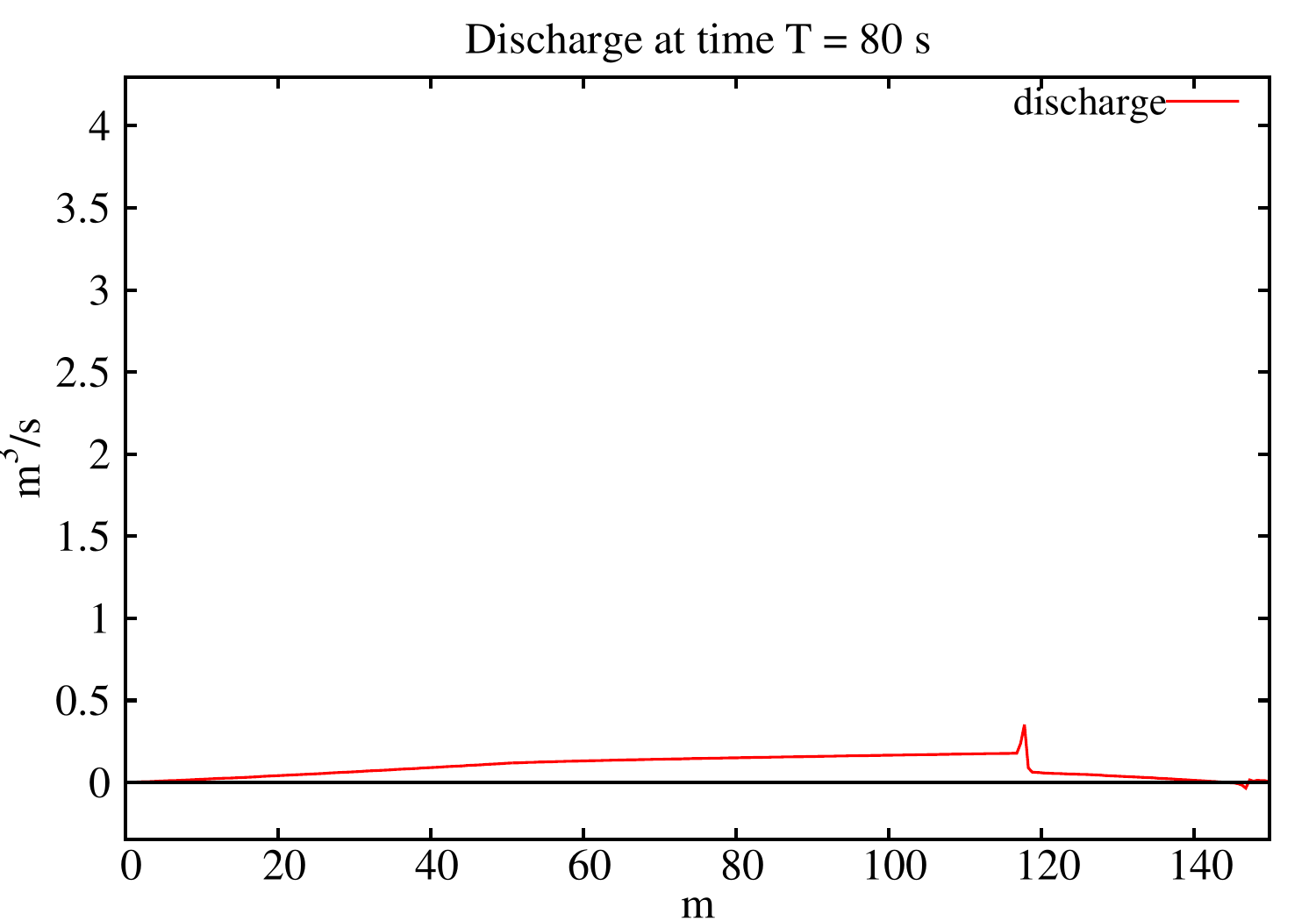}
\caption{Piezometric line (left) and discharge (right) at time $T = 80 \: s$\label{flaque2}.}
\end{figure}

\begin{figure}[H]
\centering
\includegraphics[height=7cm,angle=-90]{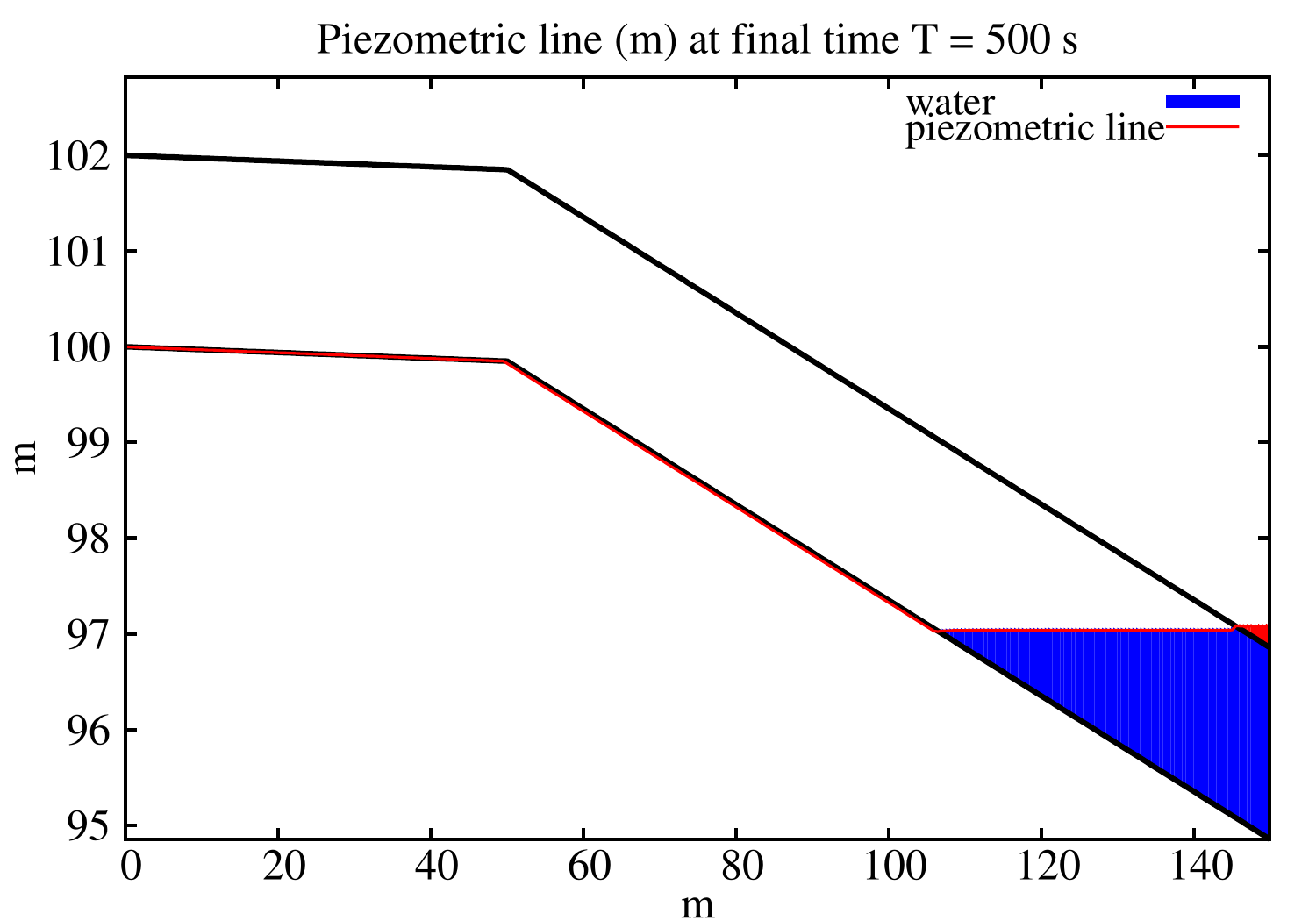}
\includegraphics[height=7cm,angle=-90]{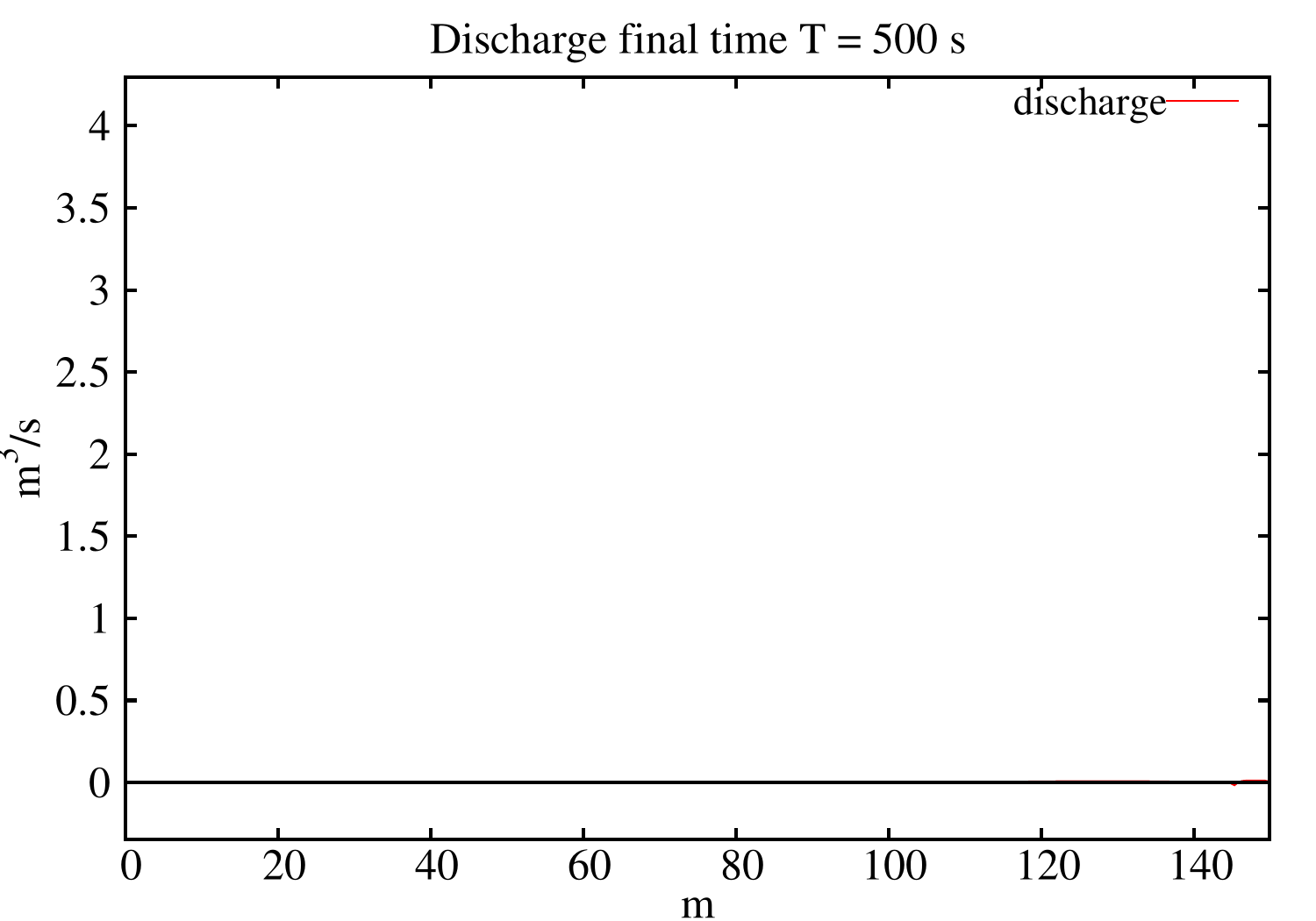}
\caption{Piezometric line (left) and discharge (right) at final time $T = 500 \:  s$\label{flaque3}.}
\end{figure}
\subsection{``Ghost waves approach'' versus ``\FKAl''}\label{versus}
We want to compare numerically the two approaches on a violent water hammer ``numerical'' test for a non uniform frictionless closed water pipes.

To this end, the numerical experiment is performed in the case of an expanding  $5 \:m$ long closed
circular water pipe  with $0$ slope. 
The upstream diameter is $2\;m$ and the downstream diameter is $3.2\;m$. The altitude of the main pipe axis is set to $Z = 1 \: m$. 

At the upstream boundary condition, the piezometric line (increasing linearly from  $1\:m$ to $3.2\:m$ in $5\;s$) is prescribed while 
the downstream discharge is kept constant equal to $0\:m^3/s$ (see  figure \ref{Init1}). 
The simulation starts from  a still water free surface  steady state where the height of the upstream is 
$1\:m$  (see figure \ref{Init1}) and the discharge  is null. 

Other parameters are 
$$
\begin{array}{lcl}
\hbox{Discretisation points}& : & 100,\\ 
\hbox{Delta x }(m)& : &0.05,\\
\hbox{CFL }& : & 0.8,\\
\hbox{Simulation time } (s) & : &5,\\
\hbox{Sound speed } (ms^{-1})& : &20.
\end{array}
$$
Let us mention that we have already used this numerical test case to compare the kinetic scheme using the ``ghost waves approach'' with 
the VFRoe scheme presented in \cite{BEG09}, see \cite[Figure 3]{BEG11_2}.

This numerical test intends to reproduce a ``sharp''  water hammer experiment inducing large oscillations of the piezometric level 
and the discharge as  showed in figures \ref{pointtransition1} and \ref{pointtransition2}. From a numerical point of view, 
it is a ``hard'' numerical test.
In order to validate numerically this approach and due to the lack of experimental data in the case of variable cross section pipes, we compare
the result of the presented numerical scheme with those obtained by the upwinded VFRoe scheme \cite{BEG09}. 
Results are represented in figures \ref{pointtransition1} and \ref{pointtransition2} where  
we have plotted the piezometric line, especially the transition point at different times $t=1.6 \: s$, $t=1.7 \: s $, $t=1.8 \: s $, $t=1.9 \: s $. 
In figures \ref{pointtransition1} and  \ref{pointtransition2}, the left side to the transition point corresponds to a free surface state 
and the right one to a pressurized except at $t=1.9 \: s$ where we can observe two transition points due to the pressurized state propagating from the downstream end. 
The behavior of the two methods are in a good agreement and particularly  
with respect to the localization of transition points. This short and ``sharp'' water hammer test allows us to validate numerically the two approaches for capturing the
transition between free surface and pressurized flow.
\begin{figure}[H]
\begin{center}
\subfigure[ Initial state.]
{
\includegraphics[height=5.5cm]{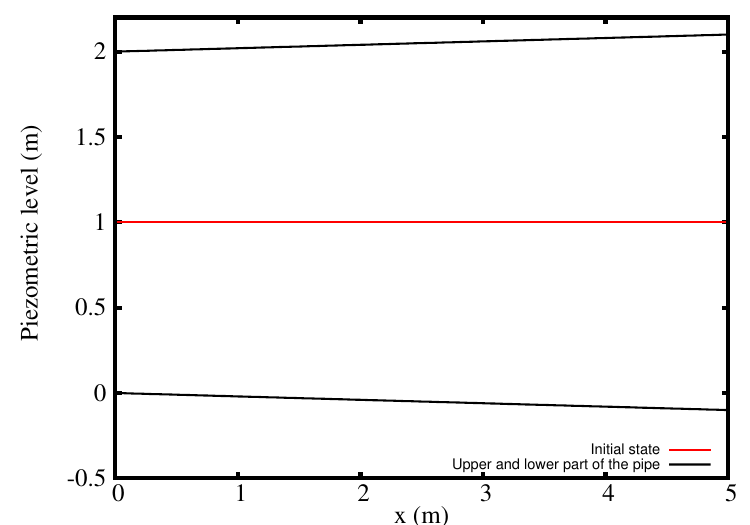}
}
\subfigure[ Prescribed upstream boundary condition.]
{
\includegraphics[height=5.5cm]{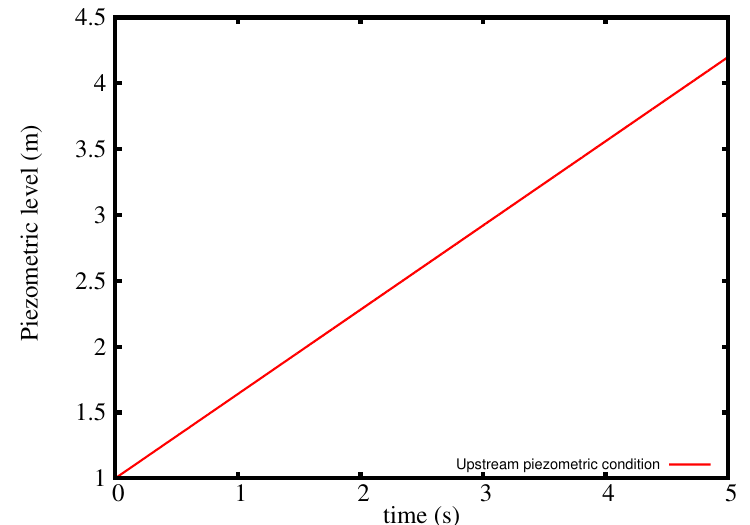}
}
\caption[Optional caption for list of figures]{Initial state and boundary conditions.}
\label{Init1}
\end{center}
\end{figure}
\begin{figure}[H]
\begin{center}
\subfigure[  $t = 1.6\; s$.]
{
\includegraphics[height=5.5cm]{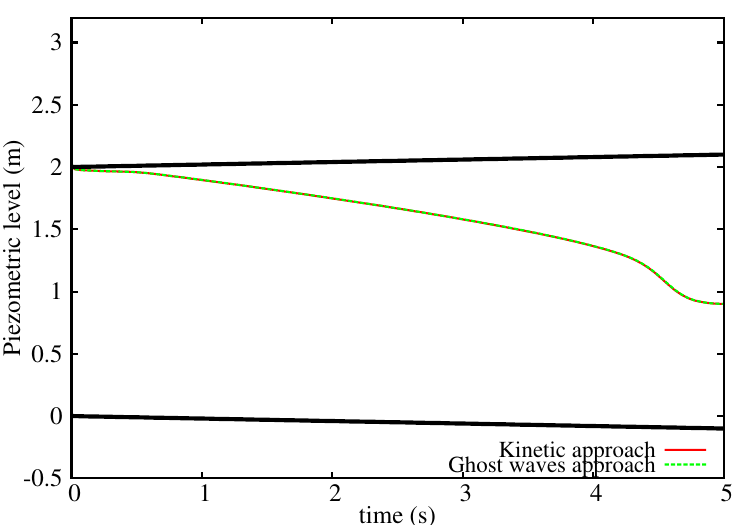}
}
\subfigure[ $t = 1.7\; s$.]
{
\includegraphics[height=5.5cm]{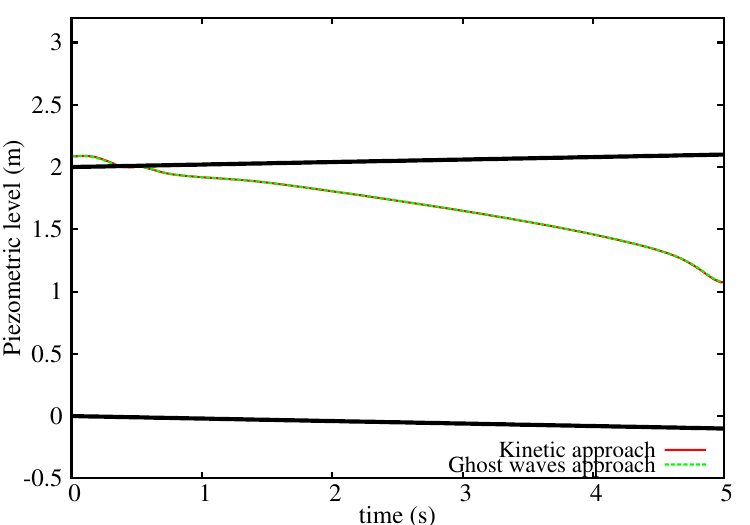}
}
\caption[Optional caption for list of figures]{Water hammer test case in non uniform closed water pipe.}
\label{pointtransition1}
\end{center}
\end{figure}

\begin{figure}[H]
\begin{center}
\subfigure[  $t = 1.8\; s$.]
{
\includegraphics[height=5.5cm]{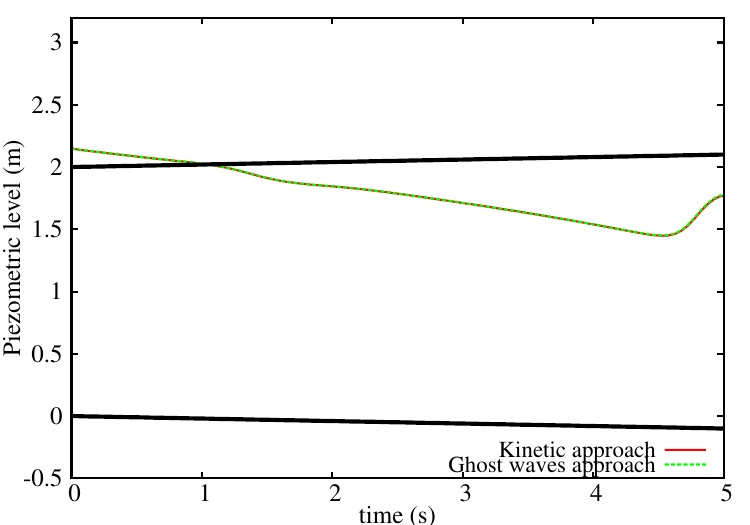}
}
\subfigure[  $t = 1.9\; s$.]
{
\includegraphics[height=5.5cm]{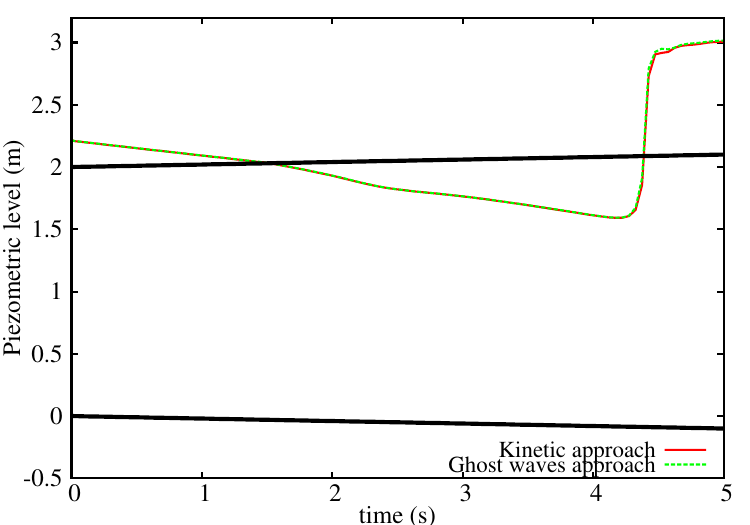}
}
\caption[Optional caption for list of figures]{Water hammer test case in non uniform closed water pipe.}
\label{pointtransition2}
\end{center}
\end{figure}
\section{Conclusion and perspectives}
We have proposed in this work a new manner to extend the numerical kinetic scheme with reflections build by Perthame and Simeoni \cite{PS01},
to closed water pipes with varying sections and not only to rectangular closed water pipes.  This scheme is wet area conservative and under a CFL condition
preserves the positivity of the wet area. 

As a well known feature of general kinetic schemes, we are able to ``naturally'' deal with flows where a flooding zone may be present. 
This key property was not solved by the previous VFRoe scheme that we proposed in \cite{BEG09} without introducing a cut-off function for the wetted
area which may causes a loss of conservativity.

The \textbf{PFS} model is numerically solved by a  kinetic  scheme with reflections  using the interfacial upwind of all the source terms into  the
numerical fluxes. 

As mentioned in \cite{BG07,BEG09} this numerical method reproduces correctly  laboratory tests for uniform pipes (Wiggert's test case) and 
can deal with multiple transition points between the two types of flows. The code to code comparison for pressurized flows in uniform pipes
has proved the robustness of the method. But due to the lack of experimental data for 
drying and flooding flows, we have only shown the behavior of the  piezometric line which seems reasonable (at less no major difference was observed).
For non uniform pipes, the two numerical schemes are in a very good agreement even though we  are not in possession of experimental data.

We are at the present time interested in the construction of a class of  ``in cell entropy satisfying'' schemes consistent with the numerical approximations of 
hyperbolic systems with source terms.  

The next step is to take into account the air entrainment which may have non negligible effects on the behavior of the
piezometric head. A first approach has been derived in the case of perfect fluid and perfect gas modeled s a bilayer model based on the
\textbf{PFS} model \cite{BEG13}.

\section*{Acknowledgements}
This work is supported by the ``Agence Nationale de la Recherche'' referenced by  ANR-08-BLAN-0301-01 and 
the  second author was  supported by  the ERC Advanced Grant FP7-246775 NUMERIWAVES. This work was finalized while the third author was visiting 
BCAM--Basque Center for Applied Mathematics, Derio, Spain, and partially supported by  the ERC Advanced Grant FP7-246775 NUMERIWAVES. The third author wishes to thank Enrique Zuazua for his kind hospitality.

Moreover, the  authors wish to thank the referees  for their  remarks and the careful reading of the numerical scheme presented in this paper.
\bibliographystyle{siam}

\end{document}